\documentclass[12pt]{amsart}

\usepackage{amsmath,amssymb,amstext,amscd,amsthm,amsfonts}
\usepackage{mathtools}
\usepackage{dsfont}

\usepackage{kuvio}
\usepackage{pstricks}
\usepackage{fancyhdr}
\usepackage[dvips]{graphicx}

\usepackage{a4wide}
\usepackage[bookmarks]{hyperref}
\usepackage[numbers,sort]{natbib}
\usepackage{stmaryrd}
\usepackage{epsf}
\usepackage[absolute,overlay]{textpos}
\usepackage{everyshi,calc,eso-pic,ifthen}
\usepackage{wallpaper}
\usepackage{dcolumn}
\usepackage{pdflscape}

\newtheorem{thm}{Theorem}[section]
\newtheorem{lem}[thm]{Lemma}

\newtheorem{cor}[thm]{Corollary}
\newtheorem{prop}[thm]{Proposition}

\theoremstyle{definition}
\newtheorem{dfn}[thm]{Definition}

\theoremstyle{remark}

\newtheorem{rem}[thm]{Remark}

\newcommand{\be}{\begin{eqnarray*}}
\newcommand{\ee}{\end{eqnarray*}}
\newcommand{\ba}{\begin{align*}}
\newcommand{\ea}{\end{align*}}

\newcommand{\N}{\mathds{N}}
\newcommand{\Z}{\mathds{Z}}

\newcommand{\R}{\mathds{R}}
\newcommand{\C}{\mathds{C}}
\newcommand{\RP}{\mathds{R}\text{P}}
\newcommand{\CP}{\mathds{C}\text{P}}
\newcommand{\HP}{\mathds{H}\text{P}}

\newcommand{\x}{\times}
\newcommand{\xto}{\xrightarrow}
\newcommand{\hto}{\hookrightarrow}
\newcommand{\ot}{\otimes}

\newcommand{\im}{\text{im}\,}
\newcommand{\id}{\text{id}}
\newcommand{\sign}{\text{sign}\,}
\newcommand{\Th}{\text{Th}}
\newcommand{\ad}{\text{ad}\,}

\newcommand{\modu}{\text{\ mod\ }}
\newcommand{\Emb}{\text{Emb}}
\newcommand{\Diff}{\text{Diff}}

\newcommand{\ga}{\alpha}
\newcommand{\gb}{\beta}
\newcommand{\gc}{\gamma}
\newcommand{\gd}{\delta}
\newcommand{\gw}{\omega}

\numberwithin{equation}{section}



\begin{document}

\title{Bordism and Projective Space Bundles}

\author{Sven F\"uhring}
\email{sven.fuehring@gmail.com}
\subjclass[2010]{Primary 53C20; Secondary 55N22}
\keywords{Bordism, Manifolds of positive scalar curvature} 
\date{May 25, 2020}

\begin{abstract}
We prove that total spaces of $\CP^2$-bundles generate the oriented cobordism ring $\Omega_*$. Combined with the \emph{surgery lemma} (\cite{gl}, \cite{sy}) this  yields a somewhat different proof of Gromov's and Lawson's theorem \cite{gl} that all simply connected non-spin manifolds of dimension $\geq 5$ carry a metric of positive scalar curvature (pscm).
\end{abstract}

\maketitle

\tableofcontents

\section{Introduction} The purpose of the present paper is to prove
\begin{thm}\label{mt}
The oriented cobordism ring $\Omega_*$ is in positive degrees generated by the total spaces of $\CP^2$-bundles with structure group $PU(3)\rtimes\Z_2$. In other words, any closed oriented manifold is cobordant to such a total space.
\end{thm}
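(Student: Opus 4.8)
The plan is to analyze the set $I\subseteq\Omega_*$ of bordism classes represented by total spaces of $\CP^2$-bundles with structure group $PU(3)\rtimes\Z_2$. The disjoint union of two such bundles over $B_1$ and $B_2$ is such a bundle over $B_1\sqcup B_2$, and $(E\to B)\times N$ is such a bundle over $B\times N$ for every closed oriented $N$; hence $I$ is an \emph{ideal} of $\Omega_*$, and it contains $[\CP^2]$ (the bundle over a point). So $\Omega_*/I$ is a graded ring, and the theorem is equivalent to the vanishing of its positive-degree part; since $I$ is an ideal this in turn follows once, in each degree $n>0$, the classes of $\CP^2$-bundles of dimension $n$ span $\Omega_n$ modulo the decomposables $\sum_{0<i<n}\Omega_i\cdot\Omega_{n-i}$ (which lie in $I$ by induction on $n$, the base case $n\le 4$ being $\Omega_{\le 3}=0$ and $[\CP^2]\in I$).

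By the structure theory of $\Omega_*$ (Thom, Milnor, Wall) the remaining problem splits into a torsion-free and a torsion part. Mod torsion, $\Omega_*$ is polynomial, $\Omega_*/\mathrm{Tors}=\Z[y_4,y_8,\dots]$ with $y_{4k}$ detected by the Pontryagin characteristic number $s_k$, whose image on oriented $4k$-manifolds is a known subgroup $d_k\Z\subseteq\Z$; so for the torsion-free part one must exhibit $\CP^2$-bundles of dimension $4k$ whose $s_k$-values have greatest common divisor $d_k$. All torsion in $\Omega_*$ has order $2$ and is detected by Stiefel--Whitney numbers, and since $\Omega_*$ is a finitely generated ring the torsion ideal is finitely generated; so for the torsion part one must realize a generating set of that ideal inside $I$.

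For the torsion-free part I would take $B=\CP^{2k-2}$ (and products of complex projective spaces) and $V=\mathcal{O}(a_1)\oplus\mathcal{O}(a_2)\oplus\mathcal{O}(a_3)$, and compute the Pontryagin classes of $E=\mathbb{P}(V)$ from $TE\cong\pi^*TB\oplus T_{E/B}$ together with the relative Euler sequence $T_{E/B}\oplus\underline{\C}\cong\pi^*V\otimes\mathcal{O}(1)$; this makes $s_k[E]$ an explicit integral polynomial in $(a_1,a_2,a_3)$. Already $\mathbb{P}(\mathcal{O}(1)\oplus\mathcal{O}\oplus\mathcal{O})$ over $\CP^{2k-2}$ has $s_k$ equal to $2k+1$ — in dimension $8$ it is in fact cobordant to $\CP^4$, having $p_1^2=25$ and $p_2=10$ — so this already gives the polynomial generator whenever $d_k=\pm(2k+1)$; the remaining cases (those $k$ with $2k+1$ a nontrivial prime power, where $d_k$ is a proper divisor of $2k+1$) require a short number-theoretic argument showing that other choices of $(a_1,a_2,a_3)$ lower the gcd to $d_k$. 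For the torsion part I would exploit the conjugation factor of $PU(3)\rtimes\Z_2$: over a base equipped with a real double cover — products of real and complex projective spaces will do — one twists $\mathbb{P}(V)$ by complex conjugation, and over $\RP^m$ with the tautological cover and $V$ trivial this recovers the Dold manifolds $P(m,2)$, oriented of dimension $m+4$ for $m$ odd, whose Stiefel--Whitney classes are given by Dold's formula $(1+c)^m(1+c+d)^3$; e.g.\ $P(1,2)$ has $w_2w_3\ne0$ and so generates $\Omega_5\cong\Z_2$, just like the Wu manifold.

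The main obstacle will be the torsion. Wall's standard torsion generators for $\Omega_*$ involve Dold manifolds $P(m,n)$ with $n\ne2$, which are $\CP^n$-bundles and not $\CP^2$-bundles, so one cannot simply invoke them; the real work is to show that these torsion classes nevertheless lie in $I$, i.e.\ to re-express the torsion of $\Omega_*$ in terms of (conjugation-twisted) $\CP^2$-bundle total spaces. I expect this to proceed by induction on the dimension, using that $I$ is an ideal to reduce to indecomposable torsion classes in each degree and then matching each against the Stiefel--Whitney numbers of twisted $\CP^2$-bundles over products of real and complex projective spaces; pinning down exactly which Stiefel--Whitney numbers such bundles attain is the technical heart of the argument.
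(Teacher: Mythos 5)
Your reduction is sound in outline: the classes of such total spaces do form an ideal $I\subseteq\Omega_*$ (disjoint unions and products with closed oriented manifolds stay in the class), so it suffices to hit each $\Omega_n$ modulo decomposables, and the problem splits into a free and a torsion part. Your torsion-free argument is essentially the paper's own: the paper takes $PE_r$ with $E_r=\gc_1\x\gc_2\oplus\C$ over $\CP^{r-1}\x\CP^{2n-r-1}$, computes $S_n(PE_r)=1+(-1)^{r+1}\binom{2n}{r}$ via the splitting $T(PE_r)\cong p^*TQ\oplus F^*\tau$ and integration along the fiber, and checks that the gcd of these numbers meets Milnor's criterion --- the same family of bundles and the same number theory you sketch (your $r=1$ case giving $s_n=2n+1$ agrees).

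The genuine gap is the torsion, which is exactly the part the paper identifies as the hard one and to which it devotes most of its length. What you offer there is a plan, not a proof: ``I expect this to proceed by induction\dots pinning down exactly which Stiefel--Whitney numbers such bundles attain is the technical heart'' defers the entire difficulty. The paper does not attack the torsion by computing Stiefel--Whitney numbers of explicit twisted bundles at all; following Stolz's $\HP^2$ strategy, it encodes the pullback construction as a transfer map $T\colon MSO\wedge\Sigma^4BG_+\to MSO$ for the universal bundle $\CP^2\hto BH\to BG$ with $G=SU(3)\rtimes\Z_2$, proves that the lift $\widehat T$ to the homotopy fiber of $U\colon MSO\to H\Z$ induces a split surjection of $A_*$-comodules on mod $2$ homology (which requires the cohomology of $BG$ and $BH$, the $Sq^1$-action, the integration along the fiber, and the injectivity of $\pi_!\circ\tilde v^*$ on the primitives of $H^*(MSO)$ and of its $Sq^1$-homology), and then shows the mod $2$ Adams spectral sequence for $MSO\wedge\Sigma^4BG_+$ collapses so that surjectivity passes from $E_2$ to homotopy. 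Nothing in your proposal substitutes for this; the single example $P(1,2)$ in dimension $5$ gives no indication of how the torsion, which requires new generators in infinitely many degrees, is to be realized. (A small additional slip: ``since $\Omega_*$ is a finitely generated ring the torsion ideal is finitely generated'' is false --- $\Omega_*/\text{Tor}$ already needs a polynomial generator in every degree divisible by $4$, so $\Omega_*$ is neither finitely generated nor Noetherian --- but your argument does not actually need this, only finite generation of each $\Omega_n$ as an abelian group.)
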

The pivotal work \cite{thom} makes the geometric question 'which manifolds are cobordant?' accessible by translating it into an algebraic topological problem. In \cite{thom} and \cite{milodd} it is proved that $\Omega_*$ is a finitely generated abelian group without odd torsion. It turns out that the proof of Theorem \ref{mt} for the torsion of $\Omega_*$ is more complicated than the proof for $\Omega_*/\text{Tor}$. The first complete set of generators of $\Omega_*$ is given by \cite{wall} using results of \cite{roh} and \cite{dold}. Our proof for the torsion follows \cite{stolz}. Stolz proved therein that localized at 2 the kernel of the Atiyah-Bott-Shapiro orientation $\ga\colon\Omega^{Spin}_*\to ko_*$ (cf. \cite{abss}) is generated by total spaces of $\HP^2$-bundles. The crucial steps in Stolz's proof are the study of the general bundle transfer and a subtle use of the Adams spectral sequence. We carry his strategy to the oriented case. For the proof for $\Omega_*/\text{Tor}$ we use Milnor's criterion \cite{milodd} and ideas of \cite{hpeh}.

Stolz's motivation for his work \cite{stolz} arises from Riemannian geometry: His result that the kernel of $\ga$ is generated by $\HP^2$-bundles which carry a pscm is one central step in his proof that all simply connected spin manifolds with vanishing $\ga$-genus of dimension $\geq 5$ carry a pscm.
The corresponding statement in the oriented case is
\begin{thm}\label{tsc}
All simply connected non-spin manifolds of dimension $\geq 5$ carry a pscm.
\end{thm}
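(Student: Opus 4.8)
The plan is to combine Theorem~\ref{mt} with the surgery lemma of Gromov--Lawson \cite{gl} and Schoen--Yau \cite{sy}: if $M'$ is obtained from $M$ by a surgery of codimension $\geq3$ and $M$ carries a pscm, then so does $M'$.

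Let $M^n$ be closed, simply connected and non-spin with $n\geq5$. By Theorem~\ref{mt}, $M$ is oriented cobordant to the total space $E$ of a $\CP^2$-bundle $\pi\colon E\to B$ with structure group $PU(3)\rtimes\Z_2$, and the first point is that $E$ itself carries a pscm. The Fubini--Study metric $g_{FS}$ on $\CP^2$ has positive scalar curvature, and its isometry group is precisely $PU(3)\rtimes\Z_2$, the $\Z_2$ acting by the anti-holomorphic isometry $[z_0:z_1:z_2]\mapsto[\bar z_0:\bar z_1:\bar z_2]$. Choosing a connection on the underlying principal bundle and a metric on the compact base $B$ therefore produces a submersion metric on $E$ with totally geodesic fibres isometric to $(\CP^2,t^2g_{FS})$; as $t\to0$ the scalar curvature of $E$ behaves like $t^{-2}\operatorname{scal}(\CP^2,g_{FS})+O(1)$ and is hence positive for $t$ small (cf.\ \cite{gl}).

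It remains to carry the pscm from $E$ to $M$ along a cobordism. Choose a connected oriented cobordism $W^{n+1}$ from $E$ to $M$; surgery on embedded circles in the interior of $W$ (which leaves $\partial W$ untouched) lets us assume $W$ simply connected. By the Hurewicz theorem $\pi_2(W,M)\cong\coker\bigl(H_2(M)\to H_2(W)\bigr)$, and I would kill this cokernel by surgery on embedded $2$-spheres in the interior of $W$. The normal bundle of such a sphere $S$ is an oriented bundle of rank $n-1\geq3$ over $S^2$, hence trivial exactly when $\langle w_2(W),[S]\rangle=0$; this is where the non-spin hypothesis enters. Since $\pi_1(M)=0$, every class in $H_2(M;\Z_2)$ is the reduction of an integral class, and $w_2(W)|_M=w_2(M)\neq0$, so the image of $H_2(M)\to H_2(W)$ contains an integral class $z_0$ with $\langle w_2(W),\bar z_0\rangle\neq0$; consequently any $x\in H_2(W)$ with $\langle w_2(W),\bar x\rangle\neq0$ can be replaced by $x+z_0$, which is represented by a framed embedded $2$-sphere whose surgery removes the class of $x$ from the cokernel. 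Once the cokernel vanishes, $(W,M)$ is $2$-connected, so $W$ can be built from $M\times I$ by attaching handles of index $\geq3$ only; reading this decomposition from the $E$-end exhibits $W$ as $E\times I$ with handles of index $\leq n-2$ attached, that is, $M$ is obtained from $E$ by finitely many surgeries of codimension $\geq3$. Iterating the surgery lemma then gives a pscm on $M$.

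The main obstacle is the reshaping of the cobordism just described, which is precisely where all three hypotheses are needed. The bound $n\geq5$ makes $\dim W\geq6$, so that handle trading and the embedding and framing of $2$-spheres in $W$ take place in the stable range; simple connectivity of $M$ allows passing to a simply connected $W$, identifying $\pi_2$ with $H_2$, and eliminating the $0$-, $1$- and (dually) $n$-, $(n+1)$-handles; and the non-spin hypothesis provides, as above, the framings needed to kill $\pi_2(W,M)$. This last hypothesis cannot be dropped: a simply connected spin manifold with non-vanishing $\ga$-genus admits no pscm, and for such $M$ the framing obstruction met above genuinely survives.
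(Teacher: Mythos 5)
Your proposal is correct and follows essentially the same route as the paper: Theorem~\ref{mt} supplies generators of $\Omega_*$ that are total spaces of $\CP^2$-bundles, the O'Neill/shrinking-the-fibre argument gives these total spaces a pscm, and the Gromov--Lawson bordism-and-surgery argument (which the paper outsources to \cite{gl}, \cite{sy}) transports the metric to $M$. You have merely written out in detail the standard steps -- making the cobordism $2$-connected relative to $M$ using the non-spin hypothesis to frame the $2$-spheres, then applying the surgery lemma handle by handle -- that the paper cites rather than reproves.
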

This was first proved in \cite{gl}. Similarly to the spin case, an important step is the insight that $\Omega_*$ is in positive degrees generated by manifolds with pscm. (In analogy to the the spin case this statement can be rephrased as follows: The kernel of the orientation map $U\colon\Omega_*\to H\Z_*$ is generated by pscm manifolds.) Gromov and Lawson showed this by using the set of generators from \cite{wall}. Since also the $\CP^2$-bundles we construct carry a pscm, Theorem \ref{mt} implies that they serve just as well for the proof of Theorem \ref{tsc}.

The basic idea for the proof of Theorem \ref{mt} is as follows. Let $G:=SU(3)\rtimes\Z_2$ and $H:=S(U(2)U(1))\rtimes\Z_2$. Then $G$ acts via matrix multiplication resp. complex conjugation transitively on $\CP^2$ with isotropy group $H$. Hence there is a fiber bundle
\begin{equation}\label{fb}
\CP^2= G/H\hto BH\xto{\pi} BG.
\end{equation}
We consider the associated transfer in bordism-homology
\[
\pi^{!}\colon\Omega_{*-4}(BG) \to \Omega_*(BH).
\]
Geometrically, $\pi^{!}$ can be identified with assigning to a manifold $f\colon N\to BG$ its pullback $f^*BH\to BH$. Combining with the collapse map $BH\to pt$ we get a map
\begin{equation}\label{psi}
\psi\colon\Omega_{n-4}(BG)\to \Omega_n.
\end{equation}
Our aim is now to prove
\begin{thm}\label{ha} $\psi$ is surjective for $n\geq 1$. \end{thm}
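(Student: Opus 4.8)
The plan is to use that $\psi$ is a homomorphism of graded $\Omega_*$-modules. For a closed oriented manifold $M$ and a class $[N,f]\in\Omega_{n-4}(BG)$, the product $[M]\cdot[N,f]$ is represented by $f\circ\mathrm{pr}_N\colon M\x N\to BG$, and the associated $\CP^2$-bundle is $M\x E_f\to M\x N$, where $E_f\to N$ is the bundle classified by $f$; hence $\psi([M]\cdot[N,f])=[M]\cdot\psi([N,f])$. Thus $\im\psi$ is an ideal of $\Omega_*$. Since $\Omega_*$ is connected with $\Omega_1=\Omega_2=\Omega_3=0$, an induction on $n$ shows that $\psi$ is surjective in all positive degrees as soon as $\im\psi$ maps onto the group of indecomposables $Q\Omega_n:=\Omega_n\big/\sum_{0<i<n}\Omega_i\cdot\Omega_{n-i}$ for every $n\ge1$; here the case $n=4$ already holds because $\psi$ sends the constant map $\mathrm{pt}\to BG$ to $[\CP^2]$, a generator of $\Omega_4\cong\Z$. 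As $\Omega_*$ is finitely generated, has no odd torsion, and has all torsion of order $2$ (\cite{thom,milodd,wall}), it suffices to show separately that $\im\psi$ surjects onto $Q\Omega_*/\mathrm{Tor}$ (detected by Pontryagin numbers) and contains the $2$-torsion subgroup of $Q\Omega_*$ (detected by Stiefel--Whitney numbers).

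For $Q\Omega_*/\mathrm{Tor}$ I would apply Milnor's criterion \cite{milodd}: it suffices to exhibit, in each degree $4k>0$, an element of $\im\psi$ whose Pontryagin $s$-number $s_k$ (associated to the $k$-th power sum of the Pontryagin roots) equals the corresponding Milnor number --- namely $\pm1$ unless $2k+1=p^l$ is a prime power, in which case $\pm p$. Following the computations of \cite{hpeh}, one takes for the base $N$ a suitable product of complex projective spaces, equipped with the line bundles, and the complex-conjugation twist coming from the $\Z_2$ in $G$, needed to define a map $f\colon N\to BG$; one describes $H^*(E_f;\Q)$ via Leray--Hirsch in terms of $H^*(N;\Q)$ and the Euler and Pontryagin classes of the vertical tangent bundle of the $\CP^2$-bundle $E_f\to N$, and evaluates $s_k[E_f]$ by integration over the fibre. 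A short computation then yields an integral combination of such total spaces realizing the Milnor number in degree $4k$, so that $\im\psi$ surjects onto $Q\Omega_{4k}/\mathrm{Tor}$.

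The substantial part is the $2$-torsion, and here I would follow Stolz's strategy \cite{stolz}. Work $2$-locally, and view $\psi$ as induced on homotopy groups by a map of spectra $\bar\psi\colon MSO\wedge(BG)_+\to MSO$ --- the ``general bundle transfer'' of \cite{stolz} --- obtained by smashing $MSO$ with the Becker--Gottlieb transfer of the bundle \eqref{fb} and composing with the collapse $(BH)_+\to S^0$. It remains to prove that $\bar\psi$ is surjective on homotopy groups in positive degrees after $2$-localization. Since, by Wall \cite{wall}, $MSO_{(2)}$ is a wedge of suspensions of $H\Z_{(2)}$ and $H\Z/2$, this amounts to controlling $\bar\psi$ in ordinary $\Z_{(2)}$- and $\Z/2$-(co)homology, which one does with the mod-$2$ Adams spectral sequence: the decisive input is an explicit description of the induced map $\bar\psi^{*}$ on $\Z/2$-cohomology as a morphism of modules over the Steenrod algebra $\mathcal A$. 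Obtaining it requires knowing $H^{*}\big(B(SU(3)\rtimes\Z_2);\Z/2\big)$ and $H^{*}(BH;\Z/2)$ together with their $Sq^i$-actions, and the transfer homomorphism in mod-$2$ cohomology --- which is nonzero because $\chi(\CP^2)=3$ is odd, exactly as $\chi(\HP^2)=3$ is used in \cite{stolz}. \textbf{I expect this to be the main obstacle:} it does not suffice to work over the subalgebra $\mathcal A(0)=\langle Sq^1\rangle$ of $\mathcal A$; one must track the secondary data distinguishing the $H\Z_{(2)}$ summands of $MSO_{(2)}$ from the $H\Z/2$ summands, and it is precisely here that the ``subtle use of the Adams spectral sequence'' enters. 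Granting this, $\im\psi$ surjects onto $Q\Omega_n$ for every $n\ge1$, and by the first paragraph $\psi$ is surjective in positive degrees.
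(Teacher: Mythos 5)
Your overall architecture matches the paper's: the free part of $\Omega_*$ is handled exactly as in Sec.~\ref{free} (Milnor's criterion applied to explicit $\CP^2$-bundles over products of complex projective spaces, as in \cite{hpeh}; note the paper even observes that no $\Z_2$-twist is needed here, $U(3)$ suffices), and the torsion is attacked via the Stolz-style transfer $T\colon MSO\wedge\Sigma^4BG_+\to MSO$ and the mod~$2$ Adams spectral sequence. Your preliminary reduction to the indecomposables $Q\Omega_n$ via the observation that $\im\psi$ is an ideal is correct and is not in the paper; it is a legitimate extra simplification, although the paper's argument ends up proving the stronger statement $\im T_*=\ker U_*$ directly, so it gains nothing there.

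The genuine gap is that the torsion half of your argument stops exactly where the proof begins, and your diagnosis of the difficulty points in the wrong direction. You write that ``it does not suffice to work over the subalgebra $\mathcal{A}(0)=\langle Sq^1\rangle$'' and that one must track secondary data beyond it; in fact the decisive structural input (Theorem~\ref{mso} and Sec.~\ref{smso}, after \cite{stolz}) is that for any $MSO$-module spectrum $X$ one has $H_*(X)\cong A_*\boxempty_{A(0)_*}\overline{H_*(X)}$ as $A_*$-comodules, so that \emph{everything reduces to $A(0)_*$}: a change-of-rings isomorphism computes the Adams $E_2$-term as $Ext_{A(0)_*}$, which collapses because the relevant comodule is concentrated in degrees $\equiv 0 \bmod 4$ (Theorem~\ref{adam}), and the split surjectivity in homology (Theorem~\ref{wide}) reduces to checking that $\pi_!\circ\tilde v^*$ is injective on the primitives of $H^*(BSO)$ and of its $Sq^1$-homology. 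Those two injectivity statements (Propositions~\ref{prim} and \ref{sqprim}), which require the explicit computation of $H^*(BG)$, $H^*(BH)$, $w(\tau)$ and $\pi_!$ in Theorem~\ref{a} together with the combinatorial analysis of the $s_n(\gw)$ via Girard's formula and Lemma~\ref{binom}, are the substance of the proof and are entirely absent from your proposal. Finally, even granting surjectivity on Adams $E_\infty$-terms, surjectivity on homotopy groups requires solving an extension problem over the (infinite) Adams filtration; the paper resolves it in the proof of Theorem~\ref{as} precisely by feeding in the free-part result to show the cokernel is finite, a step you do not address.
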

The statement of Theorem \ref{mt} follows immediately. As $G$ acts via isometries on $\CP^2$ the O`Neill formulas (cf. \cite{besse}, Proposition 9.70d) and a theorem of Vilms (cf. \cite{besse}, Theorem 9.59) guarantee pscm on total spaces. Hence we see that $\Omega_*$ is in positive degrees generated by manifolds with pscm.

\begin{rem}
We note that $SU(3)\rtimes\Z_2$ does not act effectively on $\CP^2$. Namely, the isometry group of $\CP^2$ is $PU(3)\rtimes\Z_2$.
There is, however, a natural map
\[
SU(3)\rtimes\Z_2\hto U(3)\rtimes\Z_2\twoheadrightarrow PU(3)\rtimes\Z_2.
\] 
Now it follows from the natuarality of transfer maps that Theorem \ref{ha} also holds for the corresponding map $\Omega_{n-4}(B(PU(3)\rtimes\Z_2))\to \Omega_n$. Hence we may assume the strucure group to be $PU(3)\rtimes\Z_2$.
\end{rem}

In the next section we give an outline of the proof of Theorem \ref{ha} for the torsion of $\Omega_*$. Then we present a recent description of the general bundle transfer (Sec. \ref{ctransfer}) including statements used in \cite{stolz}. This presentation is self-contained and avoids references to the unpublished work \cite{bor}. Next we study cohomological properties of the fiber bundle \ref{fb} (Sec. \ref{fiberbundle}). Then we describe the homology of MSO-module spectra (Sec. \ref{smso}) and use this results in Sec. \ref{splitsurjection} resp. \ref{adams} for cohomological computations resp. the study of the Adams spectral sequence. In Sec. \ref{free} we give the proof for $\Omega_*/\text{Tor}$.

In the last section we use the developed techniques to present the corresponding result for unoriented cobordism: Each manifold is cobordant to the total space of an $\RP^2$-bundle. This was proven in \cite{gschnitzer}. 
\bigskip

\noindent\textbf{Acknowledgements.}
The present paper is the English translation of my diploma thesis \emph{$\CP^2$-B\"undel und Bordismus} written at the Technical University of Munich in 2008 (cf. \cite{diplom}). I am indepted to my thesis advisor Prof. Hanke for his continuous encouragement and valuable support.

\section{Outline of the proof}

For a space $A$ let $A_+$ denote the disjoint union of $A$ with some basepoint and $\Sigma^kA$ the $k$-fold suspension. The classical Thom-Pontrjagin construction (cf. \cite{stong}, Ch. 2) establishes an isomorphism
\[
\Omega_n(A)\cong \pi_n(MSO \wedge A_+),
\]
where $MSO$ denotes the oriented Thom spectrum\footnote{In this paper we use 'spectra' as it is defined (for example) in \cite{switzer}, Ch. 8.}. Now the proof of Theorem \ref{ha} relies on the study of a map
\[
T\colon MSO\wedge\Sigma^4BG_+\to MSO
\]
that induces $\psi$ (cf. \ref{psi}) on homotopy groups, i.e.
\[
\psi=T_*\colon \pi_n(MSO\wedge\Sigma^4BG_+)\to\pi_n(MSO).
\]
To define $T$ we need the general description of transfer maps due to \cite{bg} and \cite{bor}. Since we are dealing with a non compact basespace, $BG$, we use in Sec. \ref{ctransfer} a slightly different definition of Boardman's 'umkehr map' than the original one.

The strategy is to analyze $T$ in \emph{homology} in order to understand $T$ in \emph{homotopy} by means of the Adams spectral sequence.

To proceed in this way we restate Theorem \ref{ha} as follows. Let $H\Z$ denote the integer Eilenberg-MacLane spectrum. The $n-th$ space of $MSO$ is the Thom space $\Th(\tilde{\gc_n})$ where $\tilde{\gc_n}\to BSO(n)$ denotes the universal oriented $n$-plane bundle. The Thom classes in $H^n(\Th(\gc_n);\Z)$, $n\in\N$, induce the universal Thom class $U\colon MSO \to H\Z$. The map $U$ is an isomorphism on $\pi_0$ and the zero map on higher homotopy groups. Therefore the statement $\im T_*=\ker U_*$ implies Theorem \ref{ha}.

Now we consider the fibration
\begin{equation}\label{hf}
\widehat{MSO}\xto{i}MSO\xto{U}H\Z,
\end{equation}
where $\widehat{MSO}$ denotes the homotopy fiber of $U$. The composition
\[
MSO\wedge\Sigma^4BG_+ \xto{T} MSO \xto{U} H\Z
\]
is null-homotopic because it represents an element in $H^{-4}(MSO\wedge BG_+;\Z)= 0$. Hence $T$ factors over $\widehat{MSO}$:
\[
\xgrid=16mm
\ygrid=7mm
\cellpush=6pt
\Diagram
                       &                     &  \widehat{MSO}&         &     \\
                       & \ruTo^{\widehat{T}} &  \dTo_i       &         &     \\
MSO \wedge\Sigma^4BG_+ & \rTo^T              &  MSO          & \rTo^U  & H\Z.\\
\endDiagram
\]
By means of the long exact homotopy sequence associated to \ref{hf}, $\im T_*=\ker U_*$ follows from
\begin{thm}\label{as} The map
\[
\widehat{T}_*\colon\pi_*(MSO \wedge\Sigma^4BG_+)\to\pi_*(\widehat{MSO})
\]
is surjective. \end{thm}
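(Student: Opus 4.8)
The plan is to prove Theorem~\ref{as} by the standard Adams-spectral-sequence strategy: reduce the surjectivity of $\widehat{T}_*$ on homotopy to a statement in mod-2 (and rational) homology, and then run a comparison of Adams spectral sequences. Since $\Omega_*$ is finitely generated with no odd torsion, it suffices to work one prime at a time, and the only prime that matters is $2$; the rational statement will be handled separately (indeed this is what Section~\ref{free} does via Milnor's criterion), so here I concentrate on the $2$-local picture. The first step is to compute $H_*(\widehat{MSO};\Z_2)$ and $H_*(MSO\wedge\Sigma^4 BG_+;\Z_2)$ as modules over the mod-$2$ Steenrod algebra $\mathcal{A}$, using the results on the homology of $MSO$-module spectra from Section~\ref{smso} together with the cohomological properties of the fiber bundle~\eqref{fb} established in Section~\ref{fiberbundle}. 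Because $MSO$ at the prime $2$ is a wedge of suspensions of $H\Z$ and $H\Z_2$, both spectra in sight are generalized Eilenberg--MacLane spectra, so their $\mathcal{A}$-module cohomology is a sum of copies of $\mathcal{A}$ and $\mathcal{A}/\mathcal{A}\mathrm{Sq}^1$, and the Adams $E_2$-page is computable.

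The key step is then to show that the map induced by $\widehat{T}$ in mod-$2$ homology, $\widehat{T}_*\colon H_*(MSO\wedge\Sigma^4 BG_+;\Z_2)\to H_*(\widehat{MSO};\Z_2)$, is \emph{surjective} (equivalently, that $\widehat{T}^*$ is a split injection of $\mathcal{A}$-modules). This is where the detailed geometry of the $\CP^2$-bundle enters: one analyzes the transfer map $T$ in homology via the Becker--Gottlieb--Boardman description recalled in Section~\ref{ctransfer}, identifying the effect of the fibre-integration along $\CP^2=G/H$ in terms of the cohomology of $BH\to BG$ computed in Section~\ref{fiberbundle}. Concretely, one must produce enough classes in $H_*(BG;\Z_2)$ (together with the $MSO$-homology classes) whose images under the transfer hit a generating set of $H_*(\widehat{MSO};\Z_2)$ over $\mathcal{A}$; the Euler-class/normal-bundle data of the bundle~\eqref{fb}, in particular the nontriviality of the relevant characteristic classes of the vertical tangent bundle, is what makes the fibre integral nonzero on the required classes.

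Once the homology map is known to be split surjective, the comparison of Adams spectral sequences finishes the argument: the induced map on $E_2$-terms, $\mathrm{Ext}_{\mathcal{A}}(H^*(\widehat{MSO};\Z_2),\Z_2)\to \mathrm{Ext}_{\mathcal{A}}(H^*(MSO\wedge\Sigma^4 BG_+;\Z_2),\Z_2)$, is then (split) injective, hence surjective onto its image already at $E_2$; since both spectra are (wedges of suspensions of) Eilenberg--MacLane spectra, their Adams spectral sequences collapse at $E_2$ with no differentials and no extension problems, so $\widehat{T}_*$ is surjective on $2$-local homotopy. Combining with the rational statement gives surjectivity integrally in each degree $*\geq 1$, which is Theorem~\ref{as}.

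The main obstacle I expect is the middle step: verifying that $\widehat{T}_*$ is surjective in mod-$2$ homology. This requires a genuinely hands-on understanding of the transfer for the bundle~\eqref{fb} --- one has to pin down the $\mathcal{A}$-module structure of $H^*(BG;\Z_2)$ and $H^*(BH;\Z_2)$ with $G=SU(3)\rtimes\Z_2$, $H=S(U(2)U(1))\rtimes\Z_2$, track the Thom isomorphism for the vertical bundle, and show the resulting composite is onto a set of $\mathcal{A}$-generators of $H_*(\widehat{MSO};\Z_2)$. The presence of the $\Z_2$-extension (the complex-conjugation factor) makes the cohomology of $BG$ and $BH$ considerably more intricate than in a purely unitary situation and is the source of the torsion difficulties alluded to in the introduction; handling the classes detected by $H\Z_2$-summands of $MSO$, as opposed to the $H\Z$-summands, is the delicate part. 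Everything else --- the collapse of the spectral sequences, the factorization through $\widehat{MSO}$, the rational computation --- is comparatively formal.
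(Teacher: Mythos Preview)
Your strategy is essentially the paper's: establish that $\widehat{T}_*$ is a split surjection of $A_*$-comodules in mod-$2$ homology (the paper's Theorem~\ref{wide}), prove collapse of the Adams spectral sequence for the source (Theorem~\ref{adam}), and combine with the torsion-free statement from Section~\ref{free}. You also correctly identify the homology step as the hard part and locate the difficulty in the $\Z_2$-extension on the structure group.

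There is, however, a genuine gap in your third step. You assert that because both spectra are wedges of Eilenberg--MacLane spectra there are ``no extension problems'', so surjectivity on $E_\infty$ immediately gives surjectivity on $2$-local homotopy. This is not automatic: the Adams filtration on the $\Z$-summands of $\pi_n(\widehat{MSO})$ is infinite (it is the $2$-adic filtration), and in general surjectivity on all associated graded pieces of an infinite filtration does not force surjectivity on the filtered groups. The paper does not sidestep this; it uses the rational result \emph{inside} the Adams argument rather than as an independent add-on. Concretely, Proposition~\ref{tfree} implies that $\pi_n(\widehat{MSO})/\im\widehat{T}_*$ is a quotient of the torsion subgroup, hence \emph{finite}. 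The induced Adams filtration on this finite cokernel therefore has only finitely many nonzero stages, and a finite downward induction (using surjectivity on each $E_\infty^{s,t}$) then forces the cokernel to vanish. So the rational computation is not merely ``combined'' with the $2$-local one at the end; it is what makes the passage from $E_\infty$ to $\pi_*$ go through.

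A smaller point: your reason for collapse (both spectra split as wedges of $H\Z$ and $H\Z_2$) is different from the paper's. The paper argues directly that, after splitting off free $A_*$-summands, the remaining $A(0)_*$-comodule is concentrated in degrees $\equiv 0 \pmod 4$ (using the $Sq^1$-homology computation of Lemma~\ref{sq1bg} and the known structure of $H^*(MSO)$), so all Adams differentials vanish for degree reasons. Your approach would also work, but you should say why $MSO\wedge\Sigma^4BG_+$ inherits such a splitting; the paper's argument avoids this by working purely with the comodule structure of $MSO$-module spectra.
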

The surjectivity of $\widehat{T}_*$ onto $\Omega_*/\text{Tor}$ will be proved in Sec. \ref{free}.

The statement that the torsion of $\Omega_*$ is contained in $\im\widehat{T}_*$ will be shown by using the Adams spectral sequence. With the spectral sequence one can study homotopy groups of spectra and maps between them: For a spectrum $X$ and a prime $p$ the mod $p$ Adams spectral sequence converges to $\pi_*(X)/T_{\nmid p}$ (cf. \cite{switzer}, Proposition 19.12),
\[
E^{s,t}_2=Ext_{A_*}^{s,t}(\Z_p,H_*(X;\Z_p))\Longrightarrow\pi_{t-s}(X)/T_{\nmid p},
\]
where $A_*$ denotes the dual Steenrod-algebra and $T_{\nmid p}$ the torsion prime to $p$. We shall consider the mod 2 Adams spectral sequence. Unless otherwise specified homology groups are understood with $\Z_2$-coefficients.

The main bulg for the proof of Theorem \ref{as} is
\begin{thm}\label{wide} $\widehat{T}_*\colon H_*(MSO\wedge\Sigma^4BG_+) \to H_*(\widehat{MSO})$ is a split surjection of $A_*$-comodules. \end{thm}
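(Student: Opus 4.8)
The plan is to bootstrap the statement from three ingredients developed earlier: the structure of the homology of $MSO$-module spectra (Section \ref{smso}), the explicit homological description of the transfer (Section \ref{ctransfer}), and the cohomology of the bundle \ref{fb} (Section \ref{fiberbundle}). Since $U\colon MSO\to H\Z$ is a map of $MSO$-module spectra, its homotopy fibre $\widehat{MSO}$ is again an $MSO$-module spectrum and $i$ and $\widehat T$ are $MSO$-module maps. By Wall's structure theorem \cite{wall}, recalled in Section \ref{smso}, $2$-locally $MSO$ is a wedge of suspensions of $H\Z$ and $H\Z_2$; exactly one summand, the $H\Z$ in degree $0$, carries the unit, and it is split off by $U$, so $\widehat{MSO}$ is equivalent to the wedge of the remaining summands. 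Consequently, writing all homology with $\Z_2$-coefficients, the homology sequence of \ref{hf} degenerates to a short exact sequence of $A_*$-comodules
\[
0\longrightarrow H_*(\widehat{MSO})\xto{i_*}H_*(MSO)\xto{U_*}H_*(H\Z)\longrightarrow 0
\]
which is split, e.g.\ by the section $H\Z\to MSO$ onto the degree-$0$ wedge summand. Thus it suffices to show that $T_*$ maps onto $\ker U_*=i_*H_*(\widehat{MSO})$ and that the induced surjection onto $H_*(\widehat{MSO})$ admits an $A_*$-comodule section. The inclusion $\im T_*\subseteq\ker U_*$ is automatic, $U\circ T$ being null-homotopic (as noted above), so only the reverse inclusion has to be proved.

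To produce it, I would use that $T$, being a map of $MSO$-module spectra, induces an $H_*(MSO)$-linear map on homology, while Künneth gives $H_*(MSO\wedge\Sigma^4BG_+)\cong H_*(MSO)\otimes\widetilde H_*(\Sigma^4BG_+)$. Hence $\im T_*$ is the $H_*(MSO)$-submodule of $H_*(MSO)$ generated by the classes $T_*(1\otimes b)$ with $b\in\widetilde H_*(\Sigma^4BG_+)\cong H_{*-4}(BG_+)$. These generators are extracted from the transfer description of Section \ref{ctransfer}: the composite $\Sigma^4BG_+\to MSO\wedge\Sigma^4BG_+\xto{T}MSO$ is the $MSO$-valued Umkehr class of the universal $\CP^2$-bundle \ref{fb}, and its effect in homology is governed by fibre integration along $\CP^2\hto BH\xto{\pi}BG$ together with the Thom class — exactly the cohomological data assembled in Section \ref{fiberbundle}. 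Unwinding this, I expect to write each $T_*(1\otimes b)$ as an explicit polynomial in the standard generators of $H_*(MSO)$ and the images of the classes of $H_*(BG)$, and then to verify that, as $b$ and the $H_*(MSO)$-coefficient range over their respective sets, one gets all of $\ker U_*$.

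The \textbf{main obstacle} is precisely this last surjectivity verification: one must see that the explicitly computed classes generate all of $\ker U_*$ as an $H_*(MSO)$-module rather than some proper sub-comodule. This is the homological shadow of the fact that $\CP^2$-bundles generate $\Omega_*$, and it rests on a careful bookkeeping of $H^*(BG)$, $H^*(BH)$, the map $\pi^*$, the transfer, and the polynomial structure of $H_*(MSO)$, carried out in close analogy with Stolz's treatment of $\HP^2$-bundles in \cite{stolz}. Granting surjectivity, the comodule splitting should follow with modest extra effort: $H_*(\widehat{MSO})$ is a direct sum of shifted copies of $A_*$ and of $H_*(H\Z)$; the $A_*$-summands are injective comodules and split off automatically once they lie in the image, while the finitely many $H_*(H\Z)$-summands in each degree are handled by exhibiting explicit comodule sections, or by a Poincar\'e-series comparison using that $T_*$ is already known to be onto. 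It is this split surjection of $A_*$-comodules that, upon applying $Ext_{A_*}(\Z_2,-)$, yields a split surjection of $E_2$-pages and thereby drives the Adams spectral sequence proof of Theorem \ref{as}.
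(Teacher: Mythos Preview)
Your overall strategy is sound and tracks the paper's: identify $H_*(\widehat{MSO})$ with $\ker U_*$, use the $MSO$-module structure to reduce the image computation, and appeal to the cohomology of the bundle \ref{fb} for the explicit input. However, you correctly flag the ``main obstacle'' --- the actual surjectivity onto $\ker U_*$ --- and then do not propose a concrete mechanism to overcome it. This is exactly where the paper supplies a structural reduction you are missing.

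The paper does \emph{not} try to show that the classes $T_*(1\otimes b)$ generate $\ker U_*$ as an $H_*(MSO)$-module by brute force. Instead it uses the isomorphism $H_*(X)\cong A_*\boxempty_{A(0)_*}\overline{H_*(X)}$ for $MSO$-module spectra (equation \ref{alg}) to reduce the $A_*$-comodule statement to the $A(0)_*$-comodule statement of Proposition \ref{l}: that $\overline{T_*}$ is a split surjection onto the augmentation ideal of $\overline{H_*(MSO)}$. After dualizing to cohomology, the key further reduction is to \emph{primitives}: since $\overline{\mu^*}$ is a coalgebra map, split injectivity of $\overline{T^*}$ follows from injectivity on $P\overline{H^*(MSO)}$ and on $PH^*(\overline{H^*(MSO)},Sq^1)$. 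These primitive spaces are one-dimensional in each relevant degree, so the problem collapses to checking that a single explicit class (namely $s_n(\omega)$ or $s_{2t,2t}(\omega)$) is not killed by $\pi_!\circ\tilde v^*$. That is the content of Propositions \ref{prim} and \ref{sqprim}, and it is what makes the computation tractable.

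Your splitting argument is also less efficient than the paper's. You propose to split off the $A_*$-summands of $H_*(\widehat{MSO})$ by injectivity and then handle the $L_*$-summands by hand; but $L_*$ is not injective as an $A_*$-comodule, so ``explicit comodule sections'' for those summands would require real work. The paper sidesteps this entirely: after the reduction to $A(0)_*$-comodules, any $A(0)^*$-module decomposes as a free part plus its $Sq^1$-homology, and an injection of $A(0)^*$-modules splits if and only if it is injective on $Sq^1$-homology. This is why Proposition \ref{sqprim} (injectivity on primitives in $Sq^1$-homology) is stated separately --- it is precisely what secures the splitting. Your Wall-splitting approach and Poincar\'e-series comparison could presumably be made to work, but it would be messier and would still need the same explicit fibre-integration computations underneath.
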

This will be proved in Sec. \ref{splitsurjection}. Now from the functoriality of $Ext$ it follows that $T$ induces a surjection on the $E_2$-terms
\[
Ext_{A_*}^{s,t}(\Z_2,H_*(MSO\wedge\Sigma^4BG_+))\to Ext_{A_*}^{s,t}(\Z_2,H_*(\widehat{MSO})).
\]
To conclude the surjectivity on the $E_{\infty}$-terms we prove in Sec. \ref{adams}:
\begin{thm}\label{adam} The Adams spectral sequence
\[
Ext_{A_*}^{s,t}(\Z_2,H_*(MSO\wedge\Sigma^4BG_+))\Longrightarrow\pi_{t-s}(MSO\wedge\Sigma^4BG_+)/T_{\nmid 2}
\]
collapses on the $E_2$-term. \end{thm}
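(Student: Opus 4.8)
The plan is to show that the mod $2$ Adams spectral sequence for $MSO \wedge \Sigma^4 BG_+$ collapses at $E_2$ by exhibiting enough structure on the $E_2$-term to kill all possible differentials. The natural strategy is to reduce to the collapse of the Adams spectral sequence for $MSO$ itself, which is classical: by Thom's and Wall's work the homology $H_*(MSO;\Z_2)$ is a free module over $A_*/\!/A_0^* $-comodule-dually, i.e.\ $H^*(MSO;\Z_2)$ is a direct sum of copies of $\mathcal A$ and $\mathcal A/\mathcal A\mathrm{Sq}^1$ as modules over the Steenrod algebra $\mathcal A$, so that $\mathrm{Ext}_{A_*}(\Z_2, H_*(MSO)) $ is concentrated in filtrations $s = 0$ (from the $\mathcal A$-free summands, contributing $\Omega_*/\mathrm{Tor}$ after tensoring) and $s$ along a single $h_0$-tower in each $\mathcal A/\mathcal A\mathrm{Sq}^1$ summand (contributing the $\Z_2$'s). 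In either case there is no room for differentials: a $d_r$ would have to raise filtration by $r \geq 2$ and lower $t-s$ by $1$, but the $E_2$-term of $MSO$ is, in each topological degree, either in filtration $0$ or an $h_0$-tower starting in filtration $0$, and $d_r(h_0) = 0$.

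First I would make precise the $A_*$-comodule structure of $H_*(MSO \wedge \Sigma^4 BG_+)$: by the Künneth theorem it is $H_*(MSO;\Z_2) \otimes \widetilde H_{*-4}(BG_+;\Z_2) = H_*(MSO) \otimes H_*(BG)[4]$ as an $A_*$-comodule, where $A_*$ coacts diagonally. The key input is that $H^*(MSO;\Z_2)$ is, as an $\mathcal A$-module, a direct sum of free modules $\mathcal A$ and modules of the form $\mathcal A/\mathcal A\mathrm{Sq}^1$ — equivalently $MSO$ is, $2$-locally, a wedge of suspensions of $H\Z$ and $H\Z_2$ (this is Wall's theorem, which also underlies Sec.~\ref{smso}). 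Smashing with $\Sigma^4 BG_+$ and using that $H^*(BG;\Z_2)$ is of finite type, we get that $MSO \wedge \Sigma^4 BG_+$ is $2$-locally a wedge of suspensions of $H\Z$ and $H\Z_2$ as well. The Adams spectral sequence of any such generalized Eilenberg--MacLane spectrum collapses at $E_2$ for the same filtration-counting reason as above: $\mathrm{Ext}_{A_*}(\Z_2, A_*) = \Z_2$ in bidegree $(0,0)$, and $\mathrm{Ext}_{A_*}(\Z_2, A_*/\!/A(0)_*)$ — the $E_2$-term for $H\Z$ — is the $h_0$-tower $\Z_2[h_0]$ in degree $t-s = 0$, on which every $d_r$ vanishes.

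The one point requiring care, and the place I expect the main obstacle, is verifying that $H^*(MSO;\Z_2)$ really splits as a sum of $\mathcal A$ and $\mathcal A/\mathcal A\mathrm{Sq}^1$ (rather than, say, involving $\mathcal A/\mathcal A\mathrm{Sq}^2$ or more complicated cyclic modules) — but this is exactly Thom's computation of $\Omega_*$ together with the identification of the $\mathrm{Sq}^1$-action, which I would cite from \cite{thom}, \cite{wall}, or \cite{stong}, Ch.~17, and which is recorded in the form I need in Sec.~\ref{smso}. Granting this, the collapse is purely formal: I would phrase the final argument as the observation that a wedge of suspensions of $H\Z$ and $H\Z_2$ has its mod~$2$ Adams $E_2$-term concentrated, in each stem, on a single $h_0$-tower emanating from filtration~$0$, so no differential $d_r$ with $r\geq 2$ can be nonzero, and the same holds after smashing with the finite-type space $\Sigma^4 BG_+$. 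Hence the spectral sequence of Theorem~\ref{adam} collapses at $E_2$.
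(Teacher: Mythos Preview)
Your high-level strategy---use Wall's splitting of $MSO_{(2)}$ as a wedge of suspensions of $H\Z$ and $H\Z_2$ and transport this through the smash with $BG_+$---is reasonable and different from the paper's, but there is a genuine gap in the transport step.

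The problem is the sentence ``Smashing with $\Sigma^4BG_+$ \dots\ we get that $MSO\wedge\Sigma^4BG_+$ is $2$-locally a wedge of suspensions of $H\Z$ and $H\Z_2$ as well.'' Smashing a copy of $H\Z$ with $BG_+$ yields the $H\Z$-module spectrum $\bigvee_n\Sigma^n H\bigl(H_n(BG;\Z)\bigr)$, and nothing you have said rules out summands $H\Z_{2^k}$ with $k\ge 2$. For such summands your filtration-counting argument fails: one checks that $H^*(H\Z_4;\Z_2)\cong A/ASq^1\oplus\Sigma A/ASq^1$ as $A$-modules, so the $E_2$-term consists of two $h_0$-towers in \emph{adjacent} stems, and since $\pi_*(H\Z_4)$ is finite there is necessarily a nontrivial $d_2$ between them. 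Thus the assertion that ``the Adams spectral sequence of any such generalized Eilenberg--MacLane spectrum collapses at $E_2$'' is false in general, and the ``single $h_0$-tower per stem'' picture is not what you actually have.

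Your argument can be repaired, but only by proving that $H_*(BG;\Z)$ has no $2$-torsion of order $>2$. This is true, but it is not a formality: it amounts to showing that the Bockstein spectral sequence for $BG$ degenerates at $E_2$, i.e.\ that the $Sq^1$-homology of $H^*(BG;\Z_2)$ matches $H^*(BG;\Q)\otimes\Z_2$ dimensionwise. That in turn requires exactly the computation of Lemma~\ref{sq1bg}, $H^*(H^*(BG),Sq^1)\cong\Z_2[y_4,y_6^2]$, together with an identification $H^*(BG;\Q)\cong\Q[c_2,c_3^2]$.

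By contrast, the paper's proof avoids the Eilenberg--MacLane splitting and any integral input. It uses the $MSO$-module structure to write $H_*(MSO\wedge\Sigma^4BG_+)\cong A_*\boxempty_{A(0)_*}\bigl(\overline{H_*(MSO)}\otimes H_*(\Sigma^4BG_+)\bigr)$, discards the free $A_*$-summands (Margolis), and observes via Lemma~\ref{sq1bg} that the remaining $A(0)_*$-comodule is concentrated in degrees $\equiv 0\pmod 4$. A change-of-rings then shows $E_2$ is concentrated in $t-s\equiv 0\pmod 4$, so every differential $d_r$ (which lowers $t-s$ by $1$) vanishes. This sparseness argument is strictly more economical: it uses the same Lemma~\ref{sq1bg} you would need, but nothing about $H_*(BG;\Z)$.
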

From this result we deduce that $\im\psi$ contains the torsion of $\Omega_*$.

In the last section we prove that also $proj\circ\psi\colon\Omega_{n-4}(BG)\to\Omega_*/\text{Tor}$ is surjective. For this statement it is sufficient to take $SU(3)$ instead of $SU(3)\rtimes\Z_2$ as structure group.

\section{The transfer map}\label{ctransfer}

We shall define the general bundle transfer and interpret it in two cases. Transfer maps with respect to arbitrary (co-)homology theories are considered in \cite{bg}. Their restrictions to compact basespaces and compact Lie groups as structure groups are not necessary. We introduce the transfer map following \cite{ebert}, Sec. 6.1 resp. \cite{gmt}, p. 8.

Let $F$ be a smooth closed manifold of dimension $k$, $B$ a (possibly infinite) connected CW-complex and $X$ a ring spectrum. Moreover let
\[
F\hto E\xto{p}B
\]
be a fiber bundle such that the tangent bundle along the fiber $\tau$ is oriented with respect to $X$. We shall construct the \emph{bundle transfer} in $X$-(co)homology associated to $E\xto{p}B$:
\[
p^!\colon X_{n-k}(B_+)\to X_n(E_+)
\]
resp.
\[
p_!\colon X^n(E_+)\to X^{n-k}(B_+).
\]

The transfer consists of the Thom isomorphism with respect to $X$ and the \emph{umkehr map} (cf. \cite{bor}). The latter is a parameterized version of the Thom-Pontrjagin collapse map: One embeds the fibers over each point of the base in some Euclidean space and takes fiberwise the collapse maps.

Let $\Emb(F,\R^{m+k})$ be the space of all smooth maps $F\hto\R^{m+k}$ equipped with the Whitney $C^{\infty}$-topology. An embedding $\iota\colon F\hto\R^{m+k}$ induces a map from the normal bundle to $\R^{m+k}$
\[
\nu^{\iota}:=(TF)^{\bot}\to\R^{m+k},\ (p,v)\mapsto \iota(p)+\iota_*(v),
\]
where $TF\subset\iota^*(T\R^{m+k})$ and $\iota_*(v)\in T_{\iota(p)}\R^{m+k}\cong_{\text{can}}\R^{m+k}$. We call $\iota$ a \emph{fat} embedding if this map restricted to the unit disk bundle is an embedding. According to Whitney's embedding theorem $\Emb(F,\R^{m+k})$ is
$(m+k-2k-2)$-connected, and from the tubular neighborhood theorem it follows that the inclusion $\Emb(F,\R^{m+k})^{\text{fat}}\hto\Emb(F,\R^{m+k})$ is a homotopy equivalence. Hence $\Emb(F,\R^{m+k})^{\text{fat}}$ is also $(m-k-2)$-connected.

Let $\pi\colon\widetilde{E}\to B$ be the $\Diff(F)$-principle bundle associated to $E\to B$. Now $G:=\Diff(F)$ acts on $\Emb(F,\R^{m+k})^{\text{fat}}$ and we consider the $\Emb(F,\R^{m+k})^{\text{fat}}$-bundle
\[
\widetilde{E}\x_G \Emb(F,\R^{m+k})^{\text{fat}}\to B.
\]
Since $\Emb(F,\R^{m+k})^{\text{fat}}$ is $(m-k-2)$-connected, there exists a section from the $m-k-2$-skeleton $B^{(m-k-2)}$ of $B$ to $\widetilde{E}_m\x_G \Emb(F,\R^{m+k})$, $\widetilde{E}_m:=\pi^{-1}(B^{(m-k-2)})$. We can chose this section such that it extends a given section on a smaller skeleton. For $m\geq k+2$ we put $B_m:=B^{(m-k-2)}$ resp. $E_m:=p^{-1}(B_m)$ and get exhaustions $B_{k+2}\subset B_{k+3}\subset B_{k+4}\subset\dots$ resp. $E_{k+2}\subset E_{k+3}\subset E_{k+4}\subset\dots$ of $B$ resp. $E$.

A section $s\colon B_m\to\widetilde{E}_m\x_G \Emb(F,\R^{m+k})$ corresponds to embeddings of the fibers over $B_m$ in $\R^{m+k}$ that depend continuously on the basepoint, in other words to a map of fiber bundles
\begin{equation}\label{emb}
j_m\colon E_m\to B_m\x\R^{m+k}
\end{equation}
over $B_m$ such that $j_m|_{p^{-1}(b)}\colon p^{-1}(b)\to\R^{m+k}$ is an embedding for all $b\in B_m$. In addition we have
\begin{equation}\label{vbed}
j_{m+1}|_{E_m}=j_m.
\end{equation}
We consider the tangent bundle along the fiber $\tau$ restricted to $E_m$ as a subbundle in $E_m\x\R^{m+k}$ and put
\[
-\tau_m:=\tau_m^{\bot}\subset E_m\x\R^{m+k}.
\]
Condition \ref{vbed} gives isomorphisms $-\tau_{m+1}|_{E_m}\cong-\tau_m\oplus\R$. Let us introduce a handy
\begin{dfn}
Given an exhaustion $A_0\subset A_1\subset A_2\subset A_3\ldots$ of some topological space $A$. A \emph{stable vector bundle} over $A$ is a sequence of vector bundles $\eta_n\to A_n$ of dimension $n$ with given isomorphisms $\eta_{n+1}|_{A_n} \cong\eta_n\oplus\R$.
\end{dfn}
We saw above that $-\tau_m\to E_m$ became a stable vector bundle.

Since $j_m|_{p^{-1}(b)}$ is a fat embedding for all $b$, we can think of $-\tau_m$ as a parameterized tubular neighborhood of $E_m$ in $B_m\x\R^{m+k}$. In this way we get a parameterized Thom-Pontrjagin collapse map
\[
t_m\colon\Sigma^{m+k}(B_{m})_+\to\Th(-\tau_m).
\]
We define a spectrum $\widehat{B}$ by $\widehat{B}_m:=\Sigma^m(B_m)_+$ with the obvious structure maps. Then $\widehat{B}$ is a cofinal subspectrum of the suspension spectrum $\Sigma^{\infty}(B_+)$, i.e. $\widehat{B}$ is homotopy equivalent to $\Sigma^{\infty}(B_+)$ via the inclusion.

Now $(t_m)$ induce a map of spectra, the umkehr map
\begin{equation}\label{kleintt}
t\colon\Sigma^k\widehat{B}\to M(-\tau).
\end{equation}

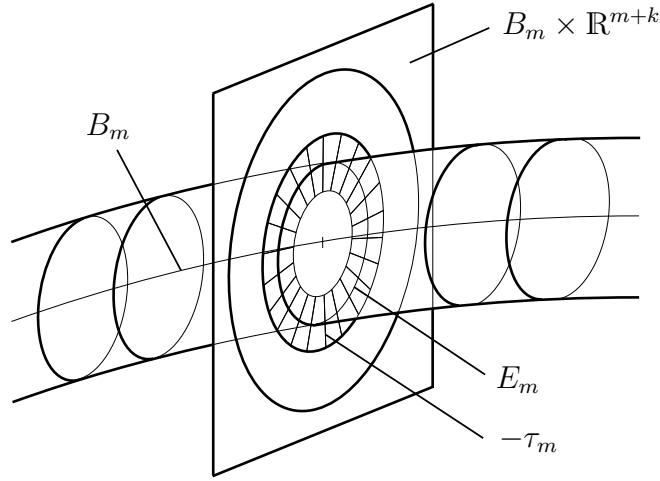
\begin{figure}

\psset{xunit=.5pt,yunit=.5pt,runit=.5pt}
\begin{pspicture}(450,744)(300,300)
{
\newrgbcolor{curcolor}{0 0 0}
\pscustom[linewidth=0.27009373,linecolor=curcolor]
{
\newpath
\moveto(131.63648934,461.95821629)
\curveto(280.32310591,515.04191229)(440.89135245,542.09075299)(606.39771967,541.93505584)
}
}
{
\newrgbcolor{curcolor}{0 0 0}
\pscustom[linewidth=0.09830981,linecolor=curcolor]
{
\newpath
\moveto(367.58365,601.68913262)
\lineto(367.15995,559.80566262)
}
}
{
\newrgbcolor{curcolor}{0 0 0}
\pscustom[linewidth=0.09830981,linecolor=curcolor]
{
\newpath
\moveto(373.09154,561.08646262)
\lineto(381.3534,603.90429262)
}
}
{
\newrgbcolor{curcolor}{0 0 0}
\pscustom[linewidth=0.09830981,linecolor=curcolor]
{
\newpath
\moveto(379.65865,558.15505262)
\lineto(394.27577,596.83851262)
}
}
{
\newrgbcolor{curcolor}{0 0 0}
\pscustom[linewidth=0.09830981,linecolor=curcolor]
{
\newpath
\moveto(385.37839,550.41187262)
\curveto(385.44901,550.44042262)(385.51963,550.46896262)(385.37839,550.41187262)
\closepath
}
}
{
\newrgbcolor{curcolor}{0 0 0}
\pscustom[linewidth=0.09830981,linecolor=curcolor]
{
\newpath
\moveto(385.06066,551.58692262)
\lineto(403.49091,585.10826262)
}
}
{
\newrgbcolor{curcolor}{0 0 0}
\pscustom[linewidth=0.09830981,linecolor=curcolor]
{
\newpath
\moveto(388.02643,543.10277262)
\lineto(410.05802,567.09341262)
}
}
{
\newrgbcolor{curcolor}{0 0 0}
\pscustom[linewidth=0.09830981,linecolor=curcolor]
{
\newpath
\moveto(389.29747,533.93346262)
\lineto(413.02379,545.94663262)
}
}
{
\newrgbcolor{curcolor}{0 0 0}
\pscustom[linewidth=0.09830981,linecolor=curcolor]
{
\newpath
\moveto(389.19155,524.57988262)
\lineto(412.28237,525.53565262)
}
}
{
\newrgbcolor{curcolor}{0 0 0}
\pscustom[linewidth=0.09830981,linecolor=curcolor]
{
\newpath
\moveto(387.70867,513.92473262)
\lineto(409.10472,505.44321262)
}
}
{
\newrgbcolor{curcolor}{0 0 0}
\pscustom[linewidth=0.09830981,linecolor=curcolor]
{
\newpath
\moveto(384.74286,503.78736262)
\lineto(402.96131,486.38633262)
}
}
{
\newrgbcolor{curcolor}{0 0 0}
\pscustom[linewidth=0.09830981,linecolor=curcolor]
{
\newpath
\moveto(380.71788,495.45633262)
\curveto(380.75317,495.47060262)(380.78847,495.48488262)(380.71788,495.45633262)
\closepath
}
}
{
\newrgbcolor{curcolor}{0 0 0}
\pscustom[linewidth=0.09830981,linecolor=curcolor]
{
\newpath
\moveto(380.18827,494.49736262)
\lineto(394.16985,467.37614262)
}
}
{
\newrgbcolor{curcolor}{0 0 0}
\pscustom[linewidth=0.09830981,linecolor=curcolor]
{
\newpath
\moveto(373.93892,486.19811262)
\lineto(381.67116,450.77767262)
}
}
{
\newrgbcolor{curcolor}{0 0 0}
\pscustom[linewidth=0.09830981,linecolor=curcolor]
{
\newpath
\moveto(368.64285,482.38103262)
\curveto(368.60753,482.36676262)(368.57223,482.35248262)(368.64285,482.38103262)
\closepath
}
}
{
\newrgbcolor{curcolor}{0 0 0}
\pscustom[linewidth=0.09830981,linecolor=curcolor]
{
\newpath
\moveto(368.21918,482.02353262)
\lineto(369.49023,442.12873262)
}
}
{
\newrgbcolor{curcolor}{0 0 0}
\pscustom[linewidth=0.09830981,linecolor=curcolor]
{
\newpath
\moveto(361.75799,480.71482262)
\lineto(355.08495,439.47042262)
}
}
{
\newrgbcolor{curcolor}{0 0 0}
\pscustom[linewidth=0.09830981,linecolor=curcolor]
{
\newpath
\moveto(355.93233,482.27006262)
\lineto(343.00994,443.71312262)
}
}
{
\newrgbcolor{curcolor}{0 0 0}
\pscustom[linewidth=0.09830981,linecolor=curcolor]
{
\newpath
\moveto(350.31851,488.19389262)
\curveto(350.2479,488.16534262)(350.1773,488.13680262)(350.31851,488.19389262)
\closepath
}
}
{
\newrgbcolor{curcolor}{0 0 0}
\pscustom[linewidth=0.09830981,linecolor=curcolor]
{
\newpath
\moveto(350.31851,488.38012262)
\curveto(350.1773,488.32300262)(350.1773,488.32300262)(350.31851,488.38012262)
\closepath
}
}
{
\newrgbcolor{curcolor}{0 0 0}
\pscustom[linewidth=0.09830981,linecolor=curcolor]
{
\newpath
\moveto(350.21259,488.15108262)
\lineto(332.52376,454.92949262)
}
}
{
\newrgbcolor{curcolor}{0 0 0}
\pscustom[linewidth=0.09830981,linecolor=curcolor]
{
\newpath
\moveto(346.8231,496.27771262)
\curveto(346.68187,496.22060262)(346.68187,496.22060262)(346.8231,496.27771262)
\closepath
}
}
{
\newrgbcolor{curcolor}{0 0 0}
\pscustom[linewidth=0.09830981,linecolor=curcolor]
{
\newpath
\moveto(346.8231,496.27771262)
\lineto(326.5922,468.91832262)
}
}
{
\newrgbcolor{curcolor}{0 0 0}
\pscustom[linewidth=0.09830981,linecolor=curcolor]
{
\newpath
\moveto(345.02245,506.53641262)
\lineto(322.35534,489.17879262)
}
}
{
\newrgbcolor{curcolor}{0 0 0}
\pscustom[linewidth=0.09830981,linecolor=curcolor]
{
\newpath
\moveto(345.02245,516.77824262)
\curveto(345.02245,517.02653262)(345.02245,517.02653262)(345.02245,516.77824262)
\closepath
}
}
{
\newrgbcolor{curcolor}{0 0 0}
\pscustom[linewidth=0.09830981,linecolor=curcolor]
{
\newpath
\moveto(345.23429,518.91225262)
\curveto(345.19899,518.89798262)(345.16367,518.88370262)(345.23429,518.91225262)
\closepath
}
}
{
\newrgbcolor{curcolor}{0 0 0}
\pscustom[linewidth=0.09830981,linecolor=curcolor]
{
\newpath
\moveto(345.02245,518.26796262)
\lineto(321.93165,513.02925262)
}
}
{
\newrgbcolor{curcolor}{0 0 0}
\pscustom[linewidth=0.09830981,linecolor=curcolor]
{
\newpath
\moveto(346.92904,529.28062262)
\curveto(346.71932,529.56451262)(346.8563,529.37907262)(346.92904,529.28062262)
\closepath
}
}
{
\newrgbcolor{curcolor}{0 0 0}
\pscustom[linewidth=0.09830981,linecolor=curcolor]
{
\newpath
\moveto(346.92904,529.28062262)
\lineto(325.53295,536.64483262)
}
}
{
\newrgbcolor{curcolor}{0 0 0}
\pscustom[linewidth=0.09830981,linecolor=curcolor]
{
\newpath
\moveto(355.61456,549.73769262)
\curveto(355.61456,549.79975262)(355.61456,549.86184262)(355.61456,549.73769262)
\closepath
}
}
{
\newrgbcolor{curcolor}{0 0 0}
\pscustom[linewidth=0.09830981,linecolor=curcolor]
{
\newpath
\moveto(355.61456,549.55147262)
\lineto(342.05666,578.51991262)
}
}
{
\newrgbcolor{curcolor}{0 0 0}
\pscustom[linewidth=0.09830981,linecolor=curcolor]
{
\newpath
\moveto(361.3343,556.14689262)
\lineto(354.97902,593.98617262)
}
}
{
\newrgbcolor{curcolor}{0 0 0}
\pscustom[linewidth=0.09830981,linecolor=curcolor]
{
\newpath
\moveto(350.31851,539.77550262)
\curveto(350.1773,539.71839262)(350.1773,539.71839262)(350.31851,539.77550262)
\closepath
}
}
{
\newrgbcolor{curcolor}{0 0 0}
\pscustom[linewidth=0.09830981,linecolor=curcolor]
{
\newpath
\moveto(350.31851,539.58928262)
\lineto(331.99416,558.43721262)
}
}
{
\newrgbcolor{curcolor}{0 0 0}
\pscustom[linewidth=1.99999999,linecolor=curcolor]
{
\newpath
\moveto(424.48924265,469.29684918)
\curveto(400.28560309,404.49503268)(355.24410621,376.16838893)(323.95018711,406.06773367)
\curveto(292.65626801,435.9670784)(286.90183824,512.82599187)(311.10547781,577.62780837)
\curveto(335.30911737,642.42962487)(380.35061425,670.75626862)(411.64453335,640.85692388)
\curveto(423.40863774,629.61707209)(431.07815143,613.71955886)(435.78422735,590.81970894)
}
}
{
\newrgbcolor{curcolor}{0 0 0}
\pscustom[linewidth=0.2700935,linecolor=curcolor]
{
\newpath
\moveto(402.39930328,587.45466485)
\curveto(418.65180206,560.03358383)(416.15015192,508.24501509)(396.81526278,471.85518107)
\curveto(395.57498025,469.52086845)(394.48135025,467.62128974)(393.14248596,465.47575781)
}
}
{
\newrgbcolor{curcolor}{0 0 0}
\pscustom[linewidth=1.99999993,linecolor=curcolor]
{
\newpath
\moveto(392.2319195,464.18016395)
\curveto(371.02218236,431.49165066)(342.67665512,430.72400595)(328.96065335,462.46667129)
\curveto(315.24465159,494.20933664)(321.32652378,546.50130771)(342.53626092,579.18982101)
\curveto(362.22600804,609.53572091)(387.90883077,612.86066644)(402.77309667,586.9881927)
}
}
{
\newrgbcolor{curcolor}{0 0 0}
\pscustom[linewidth=2.00000005,linecolor=curcolor]
{
\newpath
\moveto(360.19138885,459.29449766)
\curveto(341.83383829,458.77531723)(330.12249975,485.94866647)(334.05000819,519.94929508)
\curveto(337.5458721,550.21315639)(351.90534673,575.75409158)(368.60297957,581.40803401)
}
}
{
\newrgbcolor{curcolor}{0 0 0}
\pscustom[linewidth=0.27009409,linecolor=curcolor]
{
\newpath
\moveto(367.58503545,581.188626)
\curveto(386.35954071,588.52186036)(401.4780979,567.63747699)(401.33181489,534.57169882)
\curveto(401.18553188,501.50592066)(385.82952981,468.71831629)(367.05502455,461.38508193)
\curveto(365.30556661,460.70175181)(363.82604118,460.31467917)(362.09727846,460.0880391)
}
}
{
\newrgbcolor{curcolor}{0 0 0}
\pscustom[linewidth=0.06104613,linecolor=curcolor]
{
\newpath
\moveto(367.40635765,560.02010697)
\curveto(379.69740487,564.75000788)(389.56228387,551.00686342)(389.42617684,529.34344522)
\curveto(389.29302589,508.15053106)(379.69449434,487.19192287)(367.67284927,481.84456825)
}
}
{
\newrgbcolor{curcolor}{0 0 0}
\pscustom[linewidth=0.06104613,linecolor=curcolor]
{
\newpath
\moveto(367.42535746,481.88578984)
\curveto(355.13435432,476.67043858)(345.04539857,489.92892429)(344.90536692,511.48062403)
\curveto(344.76533528,533.03232377)(354.62699331,554.7563072)(366.91799644,559.97165847)
\curveto(367.02247761,560.0159922)(367.11083885,560.05244867)(367.21532661,560.09433226)
}
}
{
\newrgbcolor{curcolor}{0 0 0}
\pscustom[linewidth=0.09830981,linecolor=curcolor]
{
\newpath
\moveto(367.59546,601.84314262)
\lineto(367.17176,559.95967262)
}
}
{
\newrgbcolor{curcolor}{0 0 0}
\pscustom[linewidth=0.09830981,linecolor=curcolor]
{
\newpath
\moveto(373.10334,561.24045262)
\lineto(381.36519,604.05830262)
}
}
{
\newrgbcolor{curcolor}{0 0 0}
\pscustom[linewidth=0.09830981,linecolor=curcolor]
{
\newpath
\moveto(379.67044,558.30905262)
\lineto(394.28757,596.99251262)
}
}
{
\newrgbcolor{curcolor}{0 0 0}
\pscustom[linewidth=0.09830981,linecolor=curcolor]
{
\newpath
\moveto(385.39019,550.56588262)
\curveto(385.46081,550.59443262)(385.53143,550.62297262)(385.39019,550.56588262)
\closepath
}
}
{
\newrgbcolor{curcolor}{0 0 0}
\pscustom[linewidth=0.09830981,linecolor=curcolor]
{
\newpath
\moveto(385.07244,551.74092262)
\lineto(403.50271,585.26228262)
}
}
{
\newrgbcolor{curcolor}{0 0 0}
\pscustom[linewidth=0.09830981,linecolor=curcolor]
{
\newpath
\moveto(388.03823,543.25677262)
\lineto(410.06982,567.24742262)
}
}
{
\newrgbcolor{curcolor}{0 0 0}
\pscustom[linewidth=0.09830981,linecolor=curcolor]
{
\newpath
\moveto(389.30927,534.08747262)
\lineto(413.0356,546.10064262)
}
}
{
\newrgbcolor{curcolor}{0 0 0}
\pscustom[linewidth=0.09830981,linecolor=curcolor]
{
\newpath
\moveto(389.20335,524.73390262)
\lineto(412.29417,525.68966262)
}
}
{
\newrgbcolor{curcolor}{0 0 0}
\pscustom[linewidth=0.09830981,linecolor=curcolor]
{
\newpath
\moveto(387.72046,514.07874262)
\lineto(409.11654,505.59722262)
}
}
{
\newrgbcolor{curcolor}{0 0 0}
\pscustom[linewidth=0.09830981,linecolor=curcolor]
{
\newpath
\moveto(384.75467,503.94137262)
\lineto(402.97312,486.54033262)
}
}
{
\newrgbcolor{curcolor}{0 0 0}
\pscustom[linewidth=0.09830981,linecolor=curcolor]
{
\newpath
\moveto(380.72968,495.61035262)
\curveto(380.76496,495.62460262)(380.80029,495.63888262)(380.72968,495.61035262)
\closepath
}
}
{
\newrgbcolor{curcolor}{0 0 0}
\pscustom[linewidth=0.09830981,linecolor=curcolor]
{
\newpath
\moveto(380.20006,494.65137262)
\lineto(394.18166,467.53015262)
}
}
{
\newrgbcolor{curcolor}{0 0 0}
\pscustom[linewidth=0.09830981,linecolor=curcolor]
{
\newpath
\moveto(373.95072,486.35211262)
\lineto(381.68295,450.93167262)
}
}
{
\newrgbcolor{curcolor}{0 0 0}
\pscustom[linewidth=0.09830981,linecolor=curcolor]
{
\newpath
\moveto(368.65466,482.53504262)
\curveto(368.61935,482.52075262)(368.58404,482.50647262)(368.65466,482.53504262)
\closepath
}
}
{
\newrgbcolor{curcolor}{0 0 0}
\pscustom[linewidth=0.09830981,linecolor=curcolor]
{
\newpath
\moveto(368.23098,482.17753262)
\lineto(369.50204,442.28272262)
}
}
{
\newrgbcolor{curcolor}{0 0 0}
\pscustom[linewidth=0.09830981,linecolor=curcolor]
{
\newpath
\moveto(361.76978,480.86881262)
\lineto(355.09675,439.62442262)
}
}
{
\newrgbcolor{curcolor}{0 0 0}
\pscustom[linewidth=0.09830981,linecolor=curcolor]
{
\newpath
\moveto(355.94412,482.42405262)
\lineto(343.02175,443.86712262)
}
}
{
\newrgbcolor{curcolor}{0 0 0}
\pscustom[linewidth=0.09830981,linecolor=curcolor]
{
\newpath
\moveto(350.33031,488.34791262)
\curveto(350.2597,488.31936262)(350.18908,488.29079262)(350.33031,488.34791262)
\closepath
}
}
{
\newrgbcolor{curcolor}{0 0 0}
\pscustom[linewidth=0.09830981,linecolor=curcolor]
{
\newpath
\moveto(350.33031,488.53411262)
\curveto(350.18908,488.47702262)(350.18908,488.47702262)(350.33031,488.53411262)
\closepath
}
}
{
\newrgbcolor{curcolor}{0 0 0}
\pscustom[linewidth=2.00000008,linecolor=curcolor]
{
\newpath
\moveto(368.06806705,581.02071014)
\curveto(452.55090376,594.96541578)(525.99061335,601.08898196)(607.29017883,600.9675814)
}
}
{
\newrgbcolor{curcolor}{0 0 0}
\pscustom[linewidth=0.09830981,linecolor=curcolor]
{
\newpath
\moveto(350.2244,488.30509262)
\lineto(332.53557,455.08351262)
}
}
{
\newrgbcolor{curcolor}{0 0 0}
\pscustom[linewidth=0.09830981,linecolor=curcolor]
{
\newpath
\moveto(346.83492,496.43171262)
\curveto(346.69368,496.37460262)(346.69368,496.37460262)(346.83492,496.43171262)
\closepath
}
}
{
\newrgbcolor{curcolor}{0 0 0}
\pscustom[linewidth=0.09830981,linecolor=curcolor]
{
\newpath
\moveto(346.83492,496.43171262)
\lineto(326.60399,469.07233262)
}
}
{
\newrgbcolor{curcolor}{0 0 0}
\pscustom[linewidth=0.09830981,linecolor=curcolor]
{
\newpath
\moveto(345.03424,506.69041262)
\lineto(322.36713,489.33277262)
}
}
{
\newrgbcolor{curcolor}{0 0 0}
\pscustom[linewidth=0.09830981,linecolor=curcolor]
{
\newpath
\moveto(345.03424,516.93225262)
\curveto(345.03424,517.18054262)(345.03424,517.18054262)(345.03424,516.93225262)
\closepath
}
}
{
\newrgbcolor{curcolor}{0 0 0}
\pscustom[linewidth=0.09830981,linecolor=curcolor]
{
\newpath
\moveto(345.24609,519.06626262)
\curveto(345.21077,519.05199262)(345.17548,519.03771262)(345.24609,519.06626262)
\closepath
}
}
{
\newrgbcolor{curcolor}{0 0 0}
\pscustom[linewidth=0.09830981,linecolor=curcolor]
{
\newpath
\moveto(345.03424,518.42197262)
\lineto(321.94345,513.18326262)
}
}
{
\newrgbcolor{curcolor}{0 0 0}
\pscustom[linewidth=0.09830981,linecolor=curcolor]
{
\newpath
\moveto(346.94082,529.43462262)
\curveto(346.7311,529.71852262)(346.8681,529.53307262)(346.94082,529.43462262)
\closepath
}
}
{
\newrgbcolor{curcolor}{0 0 0}
\pscustom[linewidth=0.09830981,linecolor=curcolor]
{
\newpath
\moveto(346.94082,529.43462262)
\lineto(325.54476,536.79884262)
}
}
{
\newrgbcolor{curcolor}{0 0 0}
\pscustom[linewidth=2.00000008,linecolor=curcolor]
{
\newpath
\moveto(359.89185922,458.90639363)
\curveto(446.81422707,473.74224293)(522.6700746,480.30050056)(606.49272475,480.22672767)
}
}
{
\newrgbcolor{curcolor}{0 0 0}
\pscustom[linewidth=0.09830981,linecolor=curcolor]
{
\newpath
\moveto(355.62636,549.89170262)
\curveto(355.62636,549.95376262)(355.62636,550.01584262)(355.62636,549.89170262)
\closepath
}
}
{
\newrgbcolor{curcolor}{0 0 0}
\pscustom[linewidth=0.09830981,linecolor=curcolor]
{
\newpath
\moveto(355.62636,549.70548262)
\lineto(342.06846,578.67393262)
}
}
{
\newrgbcolor{curcolor}{0 0 0}
\pscustom[linewidth=0.09830981,linecolor=curcolor]
{
\newpath
\moveto(361.3461,556.30087262)
\lineto(354.99084,594.14018262)
}
}
{
\newrgbcolor{curcolor}{0 0 0}
\pscustom[linewidth=0.09830981,linecolor=curcolor]
{
\newpath
\moveto(350.33031,539.92950262)
\curveto(350.18908,539.87240262)(350.18908,539.87240262)(350.33031,539.92950262)
\closepath
}
}
{
\newrgbcolor{curcolor}{0 0 0}
\pscustom[linewidth=0.09830981,linecolor=curcolor]
{
\newpath
\moveto(350.33031,539.74328262)
\lineto(332.00595,558.59122262)
}
}
{
\newrgbcolor{curcolor}{0 0 0}
\pscustom[linewidth=0.27009347,linecolor=curcolor]
{
\newpath
\moveto(435.90903494,590.41871464)
\curveto(443.82393425,551.19610043)(439.21332466,506.54239693)(423.02137063,465.60182664)
}
}
{
\newrgbcolor{curcolor}{0 0 0}
\pscustom[linewidth=1.99999994,linecolor=curcolor]
{
\newpath
\moveto(238.87630475,433.83327431)
\curveto(220.24045075,430.1859855)(206.92867602,454.85337506)(209.16252189,488.89441171)
\curveto(211.25126335,520.72423746)(225.96268173,549.54186697)(243.51012748,556.17661027)
}
}
{
\newrgbcolor{curcolor}{0 0 0}
\pscustom[linewidth=0.27000001,linecolor=curcolor]
{
\newpath
\moveto(243.26876768,555.84572791)
\curveto(262.04277627,563.23190396)(277.16093351,542.19674215)(277.01465437,508.89224172)
\curveto(276.86837523,475.58774129)(261.51277939,442.56342414)(242.7387708,435.17724809)
\curveto(241.89746131,434.84625538)(241.18610004,434.61075587)(240.34733504,434.38555198)
}
}
{
\newrgbcolor{curcolor}{0 0 0}
\pscustom[linewidth=1.99999998,linecolor=curcolor]
{
\newpath
\moveto(532.90477649,478.66778329)
\curveto(514.55367884,478.2424099)(502.88628013,505.05050928)(506.8614758,538.50729083)
\curveto(510.80286184,571.67951754)(528.61683511,599.00742173)(546.862885,599.87230252)
}
}
{
\newrgbcolor{curcolor}{0 0 0}
\pscustom[linewidth=0.27009387,linecolor=curcolor]
{
\newpath
\moveto(550.26014545,599.04191635)
\curveto(569.03465071,606.31260698)(584.1532079,585.60639267)(584.00692489,552.82268799)
\curveto(583.86064188,520.03898331)(568.50463981,487.53106828)(549.73013455,480.26037766)
\curveto(545.85592665,478.76003629)(542.57419858,478.42567804)(538.91135582,479.15810842)
}
}
{
\newrgbcolor{curcolor}{0 0 0}
\pscustom[linewidth=0.27009416,linecolor=curcolor]
{
\newpath
\moveto(450.4188,594.14147262)
\lineto(450.41878,472.13808262)
}
}
{
\newrgbcolor{curcolor}{0 0 0}
\pscustom[linewidth=2,linecolor=curcolor]
{
\newpath
\moveto(284.3134,634.34908262)
\lineto(284.31339,345.40335262)
}
}
{
\newrgbcolor{curcolor}{0 0 0}
\pscustom[linewidth=2,linecolor=curcolor]
{
\newpath
\moveto(450.58372,702.25344262)
\lineto(450.58368,592.63564262)
}
}
{
\newrgbcolor{curcolor}{0 0 0}
\pscustom[linewidth=2,linecolor=curcolor]
{
\newpath
\moveto(283.83228,634.52559262)
\lineto(450.99344,702.54100262)
}
}
{
\newrgbcolor{curcolor}{0 0 0}
\pscustom[linewidth=2,linecolor=curcolor]
{
\newpath
\moveto(283.86003,344.88593262)
\lineto(450.77936,412.67287262)
}
}
{
\newrgbcolor{curcolor}{0 0 0}
\pscustom[linewidth=2,linecolor=curcolor]
{
\newpath
\moveto(450.31168,471.28818262)
\lineto(450.31161,411.62550262)
}
}
{
\newrgbcolor{curcolor}{0 0 0}
\pscustom[linewidth=2.00000008,linecolor=curcolor]
{
\newpath
\moveto(131.56730363,522.12244716)
\curveto(181.62869089,540.04743336)(227.63793895,553.2859556)(284.02363059,565.98963821)
}
}
{
\newrgbcolor{curcolor}{0 0 0}
\pscustom[linewidth=2.00000008,linecolor=curcolor]
{
\newpath
\moveto(132.47241428,400.90309601)
\curveto(182.35203751,418.71506022)(228.16203751,431.87214547)(284.2902585,444.50668296)
}
}
{
\newrgbcolor{curcolor}{0 0 0}
\pscustom[linewidth=0.27009373,linecolor=curcolor]
{
\newpath
\moveto(284.30238653,444.50941294)
\curveto(312.21585633,450.79252038)(336.26706373,455.5072057)(364.92962439,460.31452609)
}
}
{
\newrgbcolor{curcolor}{0 0 0}
\pscustom[linewidth=0.27009373,linecolor=curcolor]
{
\newpath
\moveto(283.33784575,565.50813013)
\curveto(312.29868284,572.02008602)(337.26919347,576.88250886)(367.02274529,581.8038068)
}
}
{
\newrgbcolor{curcolor}{0 0 0}
\pscustom[linewidth=2,linecolor=curcolor]
{
\newpath
\moveto(470.14173187,474.15873659)
\curveto(451.95415072,474.99093716)(440.97522149,502.91152476)(445.63519342,536.48147675)
\curveto(450.14428741,568.96451986)(467.53460264,594.93989738)(485.33978757,595.78700452)
}
}
{
\newrgbcolor{curcolor}{0 0 0}
\pscustom[linewidth=0.27,linecolor=curcolor]
{
\newpath
\moveto(483.07094735,594.64443834)
\curveto(501.84464219,601.99508093)(516.96254678,581.06118217)(516.81627009,547.91698576)
\curveto(516.67027012,514.83549198)(501.38055687,482.03161764)(482.64202722,474.5966069)
}
}
{
\newrgbcolor{curcolor}{0 0 0}
\pscustom[linewidth=0.27000001,linecolor=curcolor]
{
\newpath
\moveto(185.84279155,540.22217063)
\curveto(204.61662606,547.62203456)(219.73464312,526.54794285)(219.58836534,493.18178736)
\curveto(219.44208756,459.81563187)(204.0866341,426.73016685)(185.31279958,419.33030292)
\curveto(183.46526199,418.60208053)(181.90272671,418.20102186)(180.07943572,417.98704627)
}
}
{
\newrgbcolor{curcolor}{0 0 0}
\pscustom[linewidth=2.00000006,linecolor=curcolor]
{
\newpath
\moveto(180.93375187,417.37634029)
\curveto(162.3584014,414.54707503)(149.46055137,439.9968907)(152.14390804,474.18396456)
\curveto(154.60264129,505.50924297)(169.00617376,533.35871027)(186.202026,540.03589667)
}
}
{
\newrgbcolor{curcolor}{0 0 0}
\pscustom[linewidth=1.35000002,linecolor=curcolor]
{
\newpath
\moveto(433.04148,657.87940262)
\lineto(493.16226,684.27575262)
}
}
{
\newrgbcolor{curcolor}{0 0 0}
\pscustom[linewidth=1.35046935,linecolor=curcolor]
{
\newpath
\moveto(390.79005,488.15586262)
\lineto(490.01283,421.75865262)
}
}
{
\newrgbcolor{curcolor}{0 0 0}
\pscustom[linewidth=1.35000002,linecolor=curcolor]
{
\newpath
\moveto(211.54218,591.93673262)
\lineto(260.05691,500.40774262)
}
}
{
\newrgbcolor{curcolor}{0 0 0}
\pscustom[linewidth=1.35046935,linecolor=curcolor]
{
\newpath
\moveto(369.33461,452.04915262)
\lineto(488.53569,373.66497262)
}
}
{
\newrgbcolor{curcolor}{0 0 0}
\pscustom[linewidth=0.27009389,linecolor=curcolor]
{
\newpath
\moveto(367.10951,526.02133262)
\lineto(367.10951,517.58090262)
}
}
\rput(563,688){$B_m\x\R^{m+k}$}
\rput(523,372){$-\tau_m$}
\rput(515,418){$E_m$}
\rput(205,610){$B_m$}

\end{pspicture}

\caption{The \emph{umkehr map}}\label{aua}\end{figure}

The homotopy classes of $t_m$ are independent of the chosen sections $B_m\to\widetilde{E}_m\x_G \Emb(F,\R^{m+k})^{\text{fad}}$ because two such sections are homotopic, $\Emb(F,\R^{m+k})^{\text{fad}}$ being $(m-k-2)$-connected.

\begin{rem}
We do not want to go into the question whether the homotopy class of $t$ is independent of the chosen sections.
\end{rem}

Before defining the transfer we recall the Thom isomorphism with respect to a stable vector bundle and an arbitrary homology theory. We remind the reader that $X$ denotes a ring spectrum.

To a stable vector bundle $\eta_n\to A_n$ one can associate a spectrum $M(\eta)$ in a natural way. Simply put $(M(\eta))_n=Th(\eta_n)$. Then the given isomorphisms $\eta_n\oplus\R\cong\eta_{n+1}$ provide the necessary structure maps after taking the Thom space functor. The inclusions of fibers $\R^n\hto\eta_n$ induce maps $S^n\cong D^n/S^{n-1}\to\Th(\eta_n)$ and so maps of spectra $i\colon\Sigma^{\infty}(S^0)\to M(\eta)$. An element $u\in X^0(M(\eta))$ is called a Thom class for $\eta$ with respect to $X$ if $i^*(u)\in X^0(\Sigma^{\infty}(S^0))\cong\pi_*(X)$ is a generator of $\pi_*(X)$ for all fibers. The bundle $\eta$ is called orientable with respect to $X$ if a Thom class exists.

Now let $\eta\to A$ be a stable vector bundle which is oriented with respect to $X$ with Thom class $u\colon M(\eta)\to X$. Further let
\[
M(\Delta_*)\colon M(\eta)\to M(\eta)\wedge A_+
\]
induced by
\[
\xgrid=16mm
\ygrid=7mm
\cellpush=6pt
\flexible
\Diagram
\eta_n={} & \Delta^*(\eta_n\x A) &  \rTo^{\Delta_*} &  \eta_n\x A \\
          & \dTo                 &                  &  \dTo     \\
          & A                    &  \rTo^{\Delta}   &  A\x A   \\
\endDiagram
\]
where $\Delta\colon A\to A\x A$ denotes the diagonal map.
The Thom isomorphism theorem (cf. \cite{switzer}, Theorem 14.6) states that the map
\begin{equation*}
\widetilde{\Psi}\colon X\wedge M(\eta)  \xto{\id\wedge M(\Delta_*)}  X\wedge M(\eta)\wedge A_+
                                        \xto{\id\wedge u\wedge\id} X\wedge X      \wedge A_+
                                        \xto{\mu\wedge\id}         X\wedge A_+,
\end{equation*}
where $\mu\colon X\wedge X\to X$ denotes the spectrum multiplication, induces on homotopy groups isomorphisms
\[
\Psi\colon X_i(M(\eta))\cong X_i(A_+)
\]
for all $i$. Similarly one gets isomorphisms $X^i(A_+)\cong X^i(M(\eta))$.

Now we are able to define the bundle transfer. Note that a $X$-orientation of $\tau_m$ induces a $X$-orientation of $-\tau_m$ (cf. \cite{rud}, Ch. 5, Proposition 1.10).

\begin{dfn} The bundle transfer is given by
\begin{equation}\label{tinh}
X_{n-k}(B_+) \xto{\sigma} X_n(\Sigma^kB_+) \xto{t_*} X_n(M(-\tau)) \xto{\Psi} X_n(E_+)
\end{equation}
resp.
\begin{equation}\label{tink}
X^n(E_+) \xto{\Psi} X^n(M(-\tau)) \xto{t^*} X^n(\Sigma^kB_+) \xto{\sigma} X^{n-k}(B_+),
\end{equation}
where $\sigma$ denotes the suspension isomorphism and $\Psi$ the Thom isomorphism (both for homology and cohomology).
\end{dfn}

Note that the bundle transfer is natural in the following sense.
Consider two fibre bundles $F\hto E\xto{p} B$ and $F'\hto E'\xto{p'} B'$ which tangent bundles along the fibers $\tau$ and $\tau'$ possess Thom classes $u$ and $u'$.
Let further
\[
\xgrid=7mm
\ygrid=7mm
\cellpush=6pt
\Diagram
E        &  \rTo^{\Phi}   &  E'  \\
\dTo^{p} &             &  \dTo^{p'}  \\
B        &  \rTo^{\phi}   &  B'       \\
\endDiagram
\]
be a map of fibre bundles such that $\Phi$ restricted to the fibres induces an isomorphism $X_*(F)\xto{\cong} X_*(F')$ and the induced map $X^0(M(-\tau'))\to  X^0(M(-\tau))$ maps $u'$ to $u$.
Then the diagram
\[
\xgrid=15mm
\ygrid=7mm
\cellpush=6pt
\Diagram
X_n(E_+)        &  \rTo^{\Phi_*}   &  X_n(E'_+)      \\
\uTo_{p^!}      &               &  \uTo_{p'^!}    \\
X_{n-k}(B_+)    &  \rTo^{\phi_*}   &  X_{n-k}(B'_+)  \\
\endDiagram
\]
commutes.

In the two following propositions we shall interpret this homotopy theoretic definition of the bundle transfer in more geometric terms. Recall that in bordism theory one often considers fibrations $BG_n\xto{h} BO(n)$ induced by $G_n\to O(n)$,
$G_n$ a 'classical' Lie group (cf. \cite{switzer}, Ch. 12). By definition a vector bundle admits a '$G$-structure'
if one can lift its classifying map to $BG_n$. The pullbacks $\gc_n^G:=h^*(\gc_n)\to BG_n$ of the universal bundles $\gc_n\to BO(n)$ form a stable vector bundle which associated Thom spectrum is denoted by $MG$. It is not difficult to show that a vector bundle with $G$-structure is oriented with respect to $MG$ (the reverse direction is false in general). A manifold is called a $G$-manifold if its normal bundle admits a $G$-structure (in most cases of interest, e.g. $G_n=SO(n), Spin(n)$, this equivalent to say that its tangent bundle admits a $G$-structure).
\begin{prop}\label{hint}
For $X=MG$ the transfer \ref{tinh}, i.e. $\Omega_{n-k}^G(B)\to\Omega_{n}^G(E)$, coincides with assigning to a $G$-manifold $f\colon N\to B$ its pullback $\hat{f}\colon f^*(E)\to E$.
\end{prop}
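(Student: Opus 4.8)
The plan is to compute the homotopy-theoretic transfer $p^!$ of Definition~\ref{tinh} on a class represented by a $G$-manifold $f\colon N\to B$ and identify the result with the pullback $\widehat{f}\colon f^*(E)\to E$. We use the Thom--Pontrjagin identification $\Omega^G_m(Y)\cong\pi_m(MG\wedge Y_+)$ throughout. The key observation is that all three maps in the composite $X_{n-k}(B_+)\xto{\sigma}X_n(\Sigma^kB_+)\xto{t_*}X_n(M(-\tau))\xto{\Psi}X_n(E_+)$ have a clean geometric description when $X=MG$, and that chasing the fundamental class of $N$ through them reproduces exactly the Thom--Pontrjagin data of $f^*(E)$.

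First I would set up representatives. A class in $\Omega^{G}_{n-k}(B)$ is given by a $G$-manifold $f\colon N\to B$, and after replacing $B$ by a skeleton $B_m$ (using the exhaustion $B_{k+2}\subset B_{k+3}\subset\cdots$ and cofinality of $\widehat{B}$) we may assume $f$ factors through some $B_m$ with $m$ large. Embed $N\hookrightarrow\R^{M}$ with normal bundle $\nu_N$; the $G$-structure is a lift of the classifying map of $\nu_N$, and the Thom--Pontrjagin collapse $S^{M}\to\Th(\nu_N)\to (MG)_{M-(n-k)}\wedge(B_m)_+$ represents the bordism class of $f$. Applying $\sigma$ corresponds to crossing with a trivial $\R^k$-factor (equivalently stabilising the normal bundle), which changes nothing geometrically. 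Next, $t_*$ is induced by the umkehr map $t\colon\Sigma^k\widehat{B}\to M(-\tau)$, which by construction (Figure~\ref{aua}) is the fibrewise Thom--Pontrjagin collapse onto the parameterised tubular neighbourhood $-\tau_m$ of $E_m$ in $B_m\x\R^{m+k}$. I would now invoke transversality: pulling back the collapse data along $f$, the manifold $N$ maps to $B_m$, and the preimage of the zero-section of $-\tau_m$ under the composite collapse is precisely $f^{*}(E_m)$, with normal bundle in $\R^{M}\x\R^{m+k}$ equal to $\nu_N\oplus(-\tau)$ pulled back to $f^*(E)$ — i.e. the Thom--Pontrjagin map after $t_*$ represents the manifold $f^*(E)$ together with its map to $M(-\tau)$ classifying (stably) the bundle $-\tau$ over it. Finally, $\Psi$ is the Thom isomorphism for $-\tau$ (Theorem~14.6 of~\cite{switzer}); on the geometric side, composing the $M(-\tau)$-structure with the Thom class $u$ for $-\tau$ amounts to absorbing the $-\tau$-summand of the normal bundle, leaving the honest normal $G$-structure of $f^{*}(E)$ inside Euclidean space. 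What results is exactly the Thom--Pontrjagin representative of the $G$-manifold $\widehat{f}\colon f^{*}(E)\to E$, which is the claim.

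To make the middle step precise I would work at a finite stage $m$, where $-\tau_m\to E_m$ is an honest rank-$(m+k)$ bundle sitting as a tubular neighbourhood inside $B_m\x\R^{m+k}$, and use naturality of the Thom--Pontrjagin collapse under the bundle map $f^*(E_m)\to E_m$ covering $f$. Concretely: the collapse $S^{M+m+k}\to\Sigma^{m+k}(B_m)_+\wedge(\text{sphere from }N)\to\Th(-\tau_m)\wedge(\text{sphere})$ is transverse to the zero-section of $-\tau_m$ with preimage $f^{*}(E_m)$; this is the standard fact that a parameterised collapse computes, on underlying manifolds, the pullback of the fibre bundle. Passing to the colimit over $m$ is harmless because bordism classes are detected at finite stages. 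The geometric content of $\Psi\circ t_*\circ\sigma$ is then, step by step: stabilise, pull back the fibre bundle via $f$, then strip the $-\tau$ factor off the normal bundle via the Thom class — and the output normal bundle carries the $G$-structure of $f^*(E)$ because $-\tau$ is $MG$-oriented (indeed stably trivial as far as the $G$-structure interacts, since $\tau$ is $MG$-oriented and an $MG$-orientation of $\tau$ induces one of $-\tau$ by~\cite{rud}, Ch.~5, Prop.~1.10).

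The main obstacle I anticipate is the bookkeeping of the normal data under the Thom isomorphism: one must check that the composite $M(-\tau)\xto{u}MG$, applied after the transversality step, really reassembles the normal bundle of $f^{*}(E)$ in Euclidean space with its correct $G$-structure, rather than merely up to a stable isomorphism that ignores the lift. The cleanest way around this is to note that for a $G$-manifold the normal $G$-structure is by definition a stable datum, so it suffices to track everything stably; then the identity $\nu_{f^*(E)}\oplus\tau_{f^*(E)/f^*(E_m)}\cong f^*(\nu_E)$ together with $-\tau_m=\tau_m^{\perp}$ and the compatibility of $u$ with the universal $MG$-Thom class closes the gap. A secondary, purely technical point is the compatibility of the suspension isomorphism $\sigma$ with the skeletal exhaustion, which is routine given the cofinality of $\widehat{B}$ noted in Section~\ref{ctransfer}.
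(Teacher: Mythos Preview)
Your proposal is correct and follows essentially the same route as the paper: represent the bordism class by a Thom--Pontrjagin map into $\Th(\gc_l^G)\wedge(B_m)_+$ at a finite stage $m$, write out the transfer composite explicitly (the paper displays it as the five-line map \eqref{transfer}), and then compute the preimage of the zero section $BG_{l+m}\x E_m$ step by step to identify it with $f^*(E_m)$. The paper is slightly more economical in that it simply tracks preimages backward through each factor of the composite---noting that the Thom-isomorphism maps are induced by vector bundle maps, hence preserve zero sections---rather than framing this as a transversality argument; and it dispatches your anticipated obstacle about the $G$-structure on $f^*(E)$ in a single line by observing that $\hat{\phi}|_{f^*(E_m)}$ projected to $BG_{l+m}$ \emph{is} the required $G$-structure.
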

Note that $\hat{f}\colon f^*(E)\to E$ depends only on the cobordism class of $f\colon N\to B$ and that $f^*(E)$ is again a $G$-manifold as $T(f^*E)\cong TN\oplus T(\hat{f}^*\tau)$, i.e. the pullback-map is well defined.
\begin{proof}
Any $[N,f]\in\Omega_n^G(B)$ can be represented by a map $\phi\colon S^{n+l}\to\Th(\gc_l^G)\wedge B_+$ with $\phi^{-1}(BG_l\x B)=N$.
Since $S^{n+l}$ is compact $\im\phi$ is contained is some $\Th(\gc_l^G)\wedge (B_m)_+$. Let $v_m\colon\Th(-\tau_m)\to\Th(\gc_m^G)$ be the $MG$-Thom class of $-\tau_m$. The image of $[\phi]$ under the transfer \ref{tinh} is given by
\begin{equation}\label{transfer}
\begin{split}
\hat{\phi}\colon S^{n+l}\wedge S^{k+m}&\xto{\phi\wedge\id}                \Th(\gc_l^G)\wedge (B_m)_+\wedge S^{k+m}\\
                                      &\xto{\id\wedge t_m}                \Th(\gc_l^G)\wedge\Th(-\tau_m)\\
                                      &\xto{\id\wedge\Th(\Delta_*)}       \Th(\gc_l^G)\wedge\Th(-\tau_m)\wedge (E_m)_+\\
                                      &\xto{\id\wedge(v_m)\wedge\id}      \Th(\gc_l^G)\wedge\Th(\gc_m^G)\wedge (E_m)_+\\
                                      &\xto{\mu\wedge\id}                 \Th(\gc_{l+m}^G)\wedge (E_m)_+,
\end{split}
\end{equation}
where the rows 2 to 5 contain the Thom isomorphism.

We have to show that $\hat{\phi}^{-1}(BG_{l+m}\x E_m)=f^*(E_m)$. The preimage of $BG_{l+m}\x E_m$ under the Thom isomorphism is the zero section $BG_l\x E_m\hto\gc_l^G\x(-\tau_m)$ as all involved maps are induced by maps of vector bundles (which means fiberwise isomorphisms).

The preimage of $E_m\hto\Th(-\tau_m)$ under $t_m$ is our embedding $E_m\subset B_m\x \R^{k+m}$, cf. \ref{emb}. The preimage of $BG_l\x\{b\}\x p^{-1}(b)$ under $\phi\wedge\id$ consists of all $(n,e)\in\R^{n+l}\times\R^{k+m}$ with $n\in N$ and $e\in E_m$ such that $f(n)=b$ and $p(e)=b$. This is nothing else than the pullback $f^*E_m$. Note that $\hat{\phi}|_{f^*(E_m)}$ composed with the projection onto $BG_{l+m}$ yields a $G$-structure on $f^*(E_m)$.
\end{proof}


Let $R$ be a ring. Now we treat the bundle transfer for $X=HR$, the Eilenberg-MacLane spectrum of $R$. Let $(F,F')\hto (E,E')\xto{p} B$ be fiber bundle of CW-pairs. In addition let $(F,F')$ have cohomological dimension $k$, i.e. $H^k(F,F';R)\cong R$, $H^{k'}(F,F';R)=0$ for $k'>k$ and let $\pi_1(B)$ act trivially on $H^k(F,F';R)$. The \emph{integration along the fiber} is defined by means of the Leray-Serre spectral sequence:
\begin{align*}
p_!\colon H^n(E,E';R)\twoheadrightarrow E_{\infty}^{n-k,k}\rightarrowtail E_2^{n-k,k}&\cong H^{n-k}(B;H^k(F,F';R))\\
                                                                                              &\cong H^{n-k}(B;R).
\end{align*}

\begin{prop}\label{gint}
For $X=HR$ the bundle transfer \ref{tink} coincides with the integration along the fiber.
\end{prop}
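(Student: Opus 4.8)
The idea is to read both maps off the Leray-Serre spectral sequence of $\CP^2=F\hookrightarrow E\xto{p}B$, which reduces everything to a single computation over a point. Write $B^{(q)}$ for the $q$-skeleton and filter $H^*(E)$ and $H^*(B)$ by
\[
F^pH^*(E)=\ker\!\big(H^*(E)\to H^*(p^{-1}(B^{(p-1)}))\big),\qquad F^pH^*(B)=\ker\!\big(H^*(B)\to H^*(B^{(p-1)})\big),
\]
the first being the Serre filtration. Both $p_!$, the homotopy-theoretic transfer \eqref{tink} and integration along the fiber, are natural under pullback of the bundle along maps into $B$ (for the transfer this is the naturality of the bundle transfer stated above, for integration along the fiber it is naturality of the Leray-Serre spectral sequence), and the umkehr map \eqref{kleintt} is assembled from the cell-wise collapse maps $t_m$ over the skeleta $B_m$, so it is a \emph{filtered} map of spectra. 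Applying naturality to the inclusions $B^{(q)}\hookrightarrow B$ shows that both $p_!$ are filtered maps $H^n(E)\to H^{n-k}(B)$ of degree $-k$. Since the fiber has cohomological dimension $k$ one has $H^n(E)=F^{n-k}H^n(E)$ while $F^{n-k+1}H^{n-k}(B)=0$; hence \emph{both} maps kill $F^{n-k+1}H^n(E)$ and factor canonically as
\[
H^n(E)\twoheadrightarrow H^n(E)/F^{n-k+1}H^n(E)=E_\infty^{n-k,k}\xrightarrow{\ \overline{p_!}\ }H^{n-k}(B;R).
\]
By the very definition of integration along the fiber, its $\overline{p_!}$ is the edge monomorphism $E_\infty^{n-k,k}\hookrightarrow E_2^{n-k,k}=H^{n-k}(B;\mathcal H^k(F;R))$ followed by the orientation isomorphism $\mathcal H^k(F;R)\cong R$. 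It therefore remains to identify the transfer's $\overline{p_!}$ with the same map.

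For this I would use that, being built cell by cell, the umkehr map induces a morphism of the associated (Leray-Serre) spectral sequences, from that of $E\to B$ to the trivial spectral sequence of $B$ (with the evident regrading), whose effect on $E_2$ is, for degree reasons, a map of coefficient systems $H^p(B;\mathcal H^k(F;R))\to H^p(B;R)$ induced by a homomorphism $\mathcal H^k(F;R)\to R$; on the summand attached to a $p$-cell this homomorphism is the transfer of the trivial bundle $F\to\mathrm{pt}$ over that cell. So everything comes down to the base case $B=\mathrm{pt}$: for a closed $R$-oriented $k$-manifold $F$ with a fat embedding $\iota\colon F\hookrightarrow\R^N$ and normal bundle $\nu^\iota$, the transfer is the composite
\[
H^k(F;R)\xrightarrow{\ \cong\ }\widetilde H^{N}\!\big(\Th(\nu^\iota);R\big)\xto{t^*}\widetilde H^{N}(S^{N};R)=R,
\]
the first arrow being the Thom isomorphism and $t\colon S^N\to\Th(\nu^\iota)$ the classical Thom-Pontrjagin collapse. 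Since $t$ carries the fundamental class of $S^N$ to that of $\Th(\nu^\iota)$, which the Thom isomorphism identifies with the fundamental class $[F]\in H_k(F;R)$ belonging to the chosen $HR$-orientation of $\tau$, this composite equals the evaluation $\langle\,\cdot\,,[F]\rangle$, i.e.\ the orientation isomorphism $H^k(F;R)\cong R$ (Poincar\'e-Lefschetz duality). Comparing, the transfer's $\overline{p_!}$ agrees with that of integration along the fiber, and hence so do the two maps $H^n(E)\to H^{n-k}(B)$.

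The technical heart is the middle step: making precise that the cell-wise umkehr map induces a morphism of Leray-Serre spectral sequences and that its effect on $E_2$ is exactly the fiberwise collapse, which requires care with the degree shifts, with the passage through the cofinal subspectrum $\widehat B$, and with the identification of filtered maps of spectra with morphisms of the associated spectral sequences, together with the check that the $HR$-orientation of $\tau$ used to build the transfer restricts over a point to precisely the $R$-orientation of $F$ that enters integration along the fiber through $H^k(F,F';R)\cong R$ (this is what makes $\langle\,\cdot\,,[F]\rangle$ appear with coefficient $1$). Alternatively one can run this step as an induction on $\dim B$, using the long exact sequences of the pairs $(E,p^{-1}(B^{(q-1)}))$ and $(B,B^{(q-1)})$ and the triviality of the bundle over cells, the only subtle case being $\dim B=n-k$, where the spectral sequence degenerates for dimension reasons and the comparison follows by reduction to a single cell. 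I would also note that the argument as sketched treats $F'=\emptyset$ with $F$ a smooth closed manifold, the only case needed here since the fiber of \eqref{fb} is $\CP^2$; for a general fiber pair $(F,F')$ of cohomological dimension $k$ one first extends the umkehr construction using a fiberwise fundamental class in $H_k(F,F';R)$, after which the same comparison applies verbatim. Finally, since $H^n$ depends on only finitely many skeleta and both maps commute with skeletal inclusions, one may assume $B$ finite-dimensional throughout.
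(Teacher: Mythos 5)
Your argument is correct in outline and runs on the same engine as the paper's proof --- the naturality of the Leray--Serre spectral sequence applied to the umkehr map $t_m$ and to the projection, viewed as maps of fiber bundles of pairs over $B_m$ --- but it organizes the comparison differently. The paper introduces the four bundles \ref{nott}, observes that $q_!$ and $r_!$ are inverse Thom isomorphisms (with $r_!$ the suspension isomorphism), and concludes by commuting one diagram of integrations along the fiber, so that the identity $p_!=r_!\circ t_m^*\circ(proj^*)^{-1}\circ q_!^{-1}$ (formula \ref{tr1}) drops out with no computation on the fiber itself; the only fiber-level input is that the relevant maps have fibers of the same cohomological dimension $k+m$ and respect the chosen orientations. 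You instead factor both maps through $E_\infty^{n-k,k}$ and reduce the identification of the transfer to the case $B=\mathrm{pt}$, where you must additionally prove the classical fact that the Thom isomorphism followed by the Thom--Pontrjagin collapse equals evaluation against $[F]$. This buys a concrete picture of the transfer on a single fiber, at the cost of an extra Poincar\'e-duality lemma and of the bookkeeping you yourself flag as the technical heart: the compatibility of the Thom isomorphism with the two Serre filtrations, which in the paper is handled precisely by the bundle map $proj$ and the identification of $q_!$. Two points deserve emphasis. First, your claim that the umkehr map is a filtered map should be grounded in the fact that $t_m$ is a map of fiber bundles over $B_m$ (hence carries preimages of skeleta to preimages of skeleta); this is exactly the observation the paper isolates and is what legitimizes the whole spectral-sequence comparison. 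Second, the orientation check you mention at the end is needed in both arguments and is disposed of in the paper by the remark preceding Proposition \ref{gint}, which notes that the Thom class of $\tau$ induces the fundamental classes $[p^{-1}(b)]$ continuously in $b$.
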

Note that the an orientation class of $\tau$ induces orientation classes of $T(p^{-1}(b))$, varying continuously in $b\in B$, which induce in turn fundamental classes $[p^{-1}(b)]\in H^k(p^{-1}(b);R)$. It follows that $H^k(p^{-1}(b);R)\cong R$ and that $\pi_1(B)$ acts trivially on $H^k(p^{-1}(b);R)$ as $[p^{-1}(b)]$ is a generator of $H^k(p^{-1}(b);R)$, i.e. the integration is well defined.
\begin{proof}
For an Euclidean vector bundle $\ga\to B$ we denote by $\widehat{\ga}$ the bundle that one obtains by fiberwise identifying the spheres in the associated unit disk bundle. One has an inclusion $B\to\widehat{\ga}$, $b\mapsto\{\text{sphere over }b\}$, and $\Th(\ga)$ can be written as $\widehat{\ga}/B$.

To prove Proposition \ref{gint} we can restrict ourselves to finite subcomplexes using cellular cohomology.
We have fiber bundles
\begin{equation}\label{nott}
\begin{split}
&(S^{m},*)\hto(\widehat{-\tau_m},E_m)\xto{q}E_m,\\
&(S^{k+m},*) \hto (\widehat{(B_m\x\R^{k+m})},B_m)\xto{r} B_m,\\
&(\Th(-\tau_m|_F),*) \hto \left(\bigcup_{b\in B_m}\Th(-\tau_m|_{p^{-1}(b)}),B_m\right) \xto{\tilde{r}} B_m,\\
&(\widehat{-\tau_m|_F},F)\hto(\widehat{-\tau_m},E_m)\xto{p\circ q}B_m.
\end{split}
\end{equation}
Observe that $\left(\bigcup_{b\in B_m}\Th(-\tau_m|_{p^{-1}(b)})\right)/B_m=\Th(-\tau_m)$. The umkehr map
\[
t_m\colon(\widehat{(B_m\x\R^{k+m})},B_m) \to \left(\bigcup_{b\in B_m}\Th(-\tau_m|_{p^{-1}(b)}),B_m\right)
\]
and the projection
\[
(\widehat{-\tau_m},E_m)\xto{proj} \left(\bigcup_{b\in B_m}\Th(-\tau_m|_{p^{-1}(b)}),B_m\right)
\]
are maps of fiber bundles over $B_m$. The fibers of the bundles 2 to 4 in \ref{nott} have the same cohomological dimension $k+m$. Hence the following diagram commutes:
\[
\xgrid=24mm
\ygrid=7mm
\cellpush=6pt
\Diagram
H^n(\widehat{-\tau_m},E_m)\aTo(2,-4)>{(p\circ q)_{\natural}} & \lTo^{proj^*}_{\cong} & H^n\left(\bigcup_{b\in B_m}\Th(-\tau_m|_{p^{-1}(b)}),B_m\right) & \rTo^{t_m^*} & H^n(\widehat{(B_m\x\R^{k+m})},B_m)\aTo(-2,-4) >{r_!}<{\cong}\\
\dTo^{q_!}_{\cong} &                      & \dTo^{\tilde{r}_!}(0,-4)    &  &   \\
H^{n-m}(E_m)                &                      &                                      &  &   \\
                            & \rdTo^{p_!} &                                      &  &   \\
                            &                      & H^{n-m-k}(B_m).                      &  &   \\
\endDiagram
\]
Here $q_!$ and $r_!$ are both given by inverse Thom isomorphisms because the respective fiber bundles are vector bundles. In addition $r_!$ can be identified with the suspension isomorphism for the respective bundle is trivial. Thus the composition
\begin{equation}\label{tr1}
r_! \circ t_m^* \circ (proj^*)^{-1} \circ q_!^{-1}
\end{equation}
can be identified with \ref{tink}.
\end{proof}

\section{The cohomology of the fiber bundle}\label{fiberbundle}

Unless otherwise stated we consider homology and cohomology with $\Z_2$-coefficients. The lower index of homology and cohomology classes denote their degree.

\begin{thm}\label{a} Consider the fiber bundle
\[
\C\text{\emph{P}}^2\hto B(S(U(2)U(1))\rtimes\Z_2)\xto{\pi} B(SU(3)\rtimes \Z_2).
\]
For an appropriate choice of cohomology classes $x_i$ and $y_i$ we have
\begin{enumerate}
\item $H^*(B(S(U(2)U(1))\rtimes\Z_2))\cong\Z_2[x_1,x_2,x_4]$.
\item $H^*(B(SU(3)\rtimes\Z_2))\cong \Z_2[y_1,y_4,y_6]$.
\item $\pi^*\colon H^*(B(SU(3)\rtimes \Z_2)) \to H^*(B(S(U(2)U(1))\rtimes\Z_2))$ is given by
$y_1\mapsto x_1,\ y_4\mapsto x_2^2+x_4,\ y_6\mapsto x_2x_4$.
\item $Sq^1(y_1)=y_1^2,\ Sq^1(y_4)=0,\ Sq^1(y_6)=y_1y_6$.
\item The total Stiefel-Whitney class of the tangent bundle along the fiber is $\gw(\tau)=1+(x_1^2+x_2)+x_1x_2+x_4$.
\item $\pi_!\colon H^n(BH)\to H^{n-4}(BG)$ is modulo $y_6$ given by
\[
\pi_!(x_1^ax_2^bx_4^c)\equiv\begin{cases}y_1^ay_4^{d-1},\ &\text{if}\ b=2d>0,\, c=0\\
                                                   y_1^ay_4^{c-1},\ &\text{if}\ b=0,\, c>0\\
                                                                0,\ &\text{otherwise}.
\end{cases}
\]
\end{enumerate}
\end{thm}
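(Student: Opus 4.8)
The plan is to realise both classifying spaces through standard complex representations and to funnel everything into the Serre spectral sequences of the two fibrations $BSU(3)\to BG\to B\Z_2$ and $BU(2)\to BH\to B\Z_2$, supplemented by Whitney-sum and Wu-formula computations. I identify $G\cong SU(3)\rtimes\Z_2$ and $H\cong S(U(2)U(1))\rtimes\Z_2\cong U(2)\rtimes\Z_2$, with $\Z_2$ acting by complex conjugation in both cases. Let $\xi\to BG$ be the real $6$-plane bundle underlying the standard $\C^3$-representation of $G$, and over $BH$ let $\nu\to BH$ be the real $4$-plane bundle underlying the standard $\C^2$-representation and $\mu\to BH$ the real $2$-plane bundle underlying the determinant representation of $U(2)$. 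I take $y_1,x_1$ to be the classes pulled back from the generator of $H^1(B\Z_2)$, and set $y_4:=w_4(\xi)$, $y_6:=w_6(\xi)$, $x_2:=w_2(\mu)$, $x_4:=w_4(\nu)$. One has $w_1(\xi)=y_1$ (conjugation on $\C^3=\R^6$ has real determinant $-1$, so the real determinant of $\xi$ is the sign of the $\Z_2$-factor), while $w_1(\nu)=0$ and $w_1(\mu)=x_1$.

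For (1) and (2): conjugation sends $c_i\mapsto(-1)^ic_i$, hence acts trivially on $H^*(BSU(3);\Z_2)=\Z_2[c_2,c_3]$ and on $H^*(BU(2);\Z_2)=\Z_2[c_1,c_2]$, so both spectral sequences have $E_2=\Z_2[t]\otimes(\text{fibre algebra})$ with $t\in H^1(B\Z_2)$. The restrictions of $y_4,y_6$ to the fibre $BSU(3)$ are the mod $2$ Chern classes $c_2,c_3$ of the realified standard bundle, and those of $x_2,x_4$ to $BU(2)$ are $c_1,c_2$; thus all fibre classes (as well as the base classes) are permanent cycles and both spectral sequences collapse at $E_2$. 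A standard filtration argument then gives $H^*(BG)=\Z_2[y_1,y_4,y_6]$ and $H^*(BH)=\Z_2[x_1,x_2,x_4]$.

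For (3) and (5): restricting $\xi$ to $BH$ yields $\xi|_{BH}\cong\nu\oplus\mu'$, where $\mu'$ is the realified $\det^{-1}$-representation; since the two homomorphisms $H\to O(2)$ defining $\mu$ and $\mu'$ differ by an inner automorphism one has $\mu'\cong\mu$, so $w(\mu')=1+x_1+x_2$. I compute $w(\nu)$ by restriction: $w_1(\nu)=0$; the class $w_2(\nu)\in\langle x_1^2,x_2\rangle$ restricts to $c_1$ on $BU(2)$ and to $t^2$ on the $\Z_2$-section of $BH\to B\Z_2$ (over which $\nu$ is the $\Z_2$-representation $\R^2\oplus(\mathrm{sgn})^2$), which forces $w_2(\nu)=x_1^2+x_2$; then $w_3(\nu)=Sq^1w_2(\nu)=x_1x_2$ and $w_4(\nu)=x_4$ by definition. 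Whitney's formula $w(\xi|_{BH})=w(\nu)w(\mu')$ now gives $\pi^*(y_4)=x_2^2+x_4$ and $\pi^*(y_6)=x_2x_4$ (the cross terms cancel), and $\pi^*(y_1)=x_1$ since $BH\to B\Z_2$ factors through $BG$. For (5), the tangent bundle along the fibre of $\CP^2=G/H\to BH\to BG$ is the bundle over $BH$ associated to the isotropy $H$-representation $\mathfrak{g}/\mathfrak{h}\cong\C^2\otimes\det$ with $\Z_2$ acting by conjugation; as $c(\C^2\otimes\det)\equiv c(\C^2)\pmod 2$ and the determinant twist is invisible mod $2$, $\tau$ has the same total Stiefel--Whitney class as $\nu$, namely $1+(x_1^2+x_2)+x_1x_2+x_4$.

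For (4): $Sq^1y_1=w_1(\xi)^2=y_1^2$; the Wu formula gives $Sq^1w_6=w_1w_6+w_7$ with $w_7(\xi)=0$ (rank $6$), hence $Sq^1y_6=y_1y_6$; and $Sq^1y_4=w_1w_4+w_5=y_1y_4+w_5(\xi)$, where a Whitney-sum computation yields $\pi^*(w_5(\xi))=x_1(x_2^2+x_4)=\pi^*(y_1y_4)$, and since $\pi^*$ is injective on $H^5(BG)=y_1 H^4(BG)$ (because multiplication by $x_1$ is injective and $\pi^*$ is injective on $H^4$) we conclude $Sq^1y_4=0$. For (6): $x_2$ restricts nontrivially to $H^2(\CP^2)$ — otherwise it would lie in the image of $\pi^*$, forcing $x_2\in\langle x_1^2\rangle$, a contradiction — so by Leray--Hirsch $H^*(BH)$ is free over $H^*(BG)$ on $\{1,x_2,x_2^2\}$, and from $\pi^*(y_4)=x_2^2+x_4$, $\pi^*(y_6)=x_2x_4$ one reads off the relation $x_2^3=\pi^*(y_4)\,x_2+\pi^*(y_6)$. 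Then $\pi_!$ is the $H^*(BG)$-linear map with $\pi_!(1)=\pi_!(x_2)=0$ and $\pi_!(x_2^2)=1$, and writing $x_1^ax_2^bx_4^c=\pi^*(y_1^a)\,x_2^b\bigl(\pi^*(y_4)+x_2^2\bigr)^c$ and repeatedly applying the cubic relation reduces $\pi_!(x_1^ax_2^bx_4^c)$ to a short binomial identity mod $(y_6)$, giving the stated cases. The main obstacle is part (3): obtaining $\pi^*(y_4),\pi^*(y_6)$ with no spurious $x_1$-terms — which is what makes (4) and (6) come out as stated — and this rests on choosing the generators as Stiefel--Whitney classes of the realified \emph{standard} representations (so that the Whitney product cancels the cross terms) together with the precise evaluation $w_2(\nu)=x_1^2+x_2$ obtained from the $\Z_2$-section.
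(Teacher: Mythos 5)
Your handling of parts (1)--(4) and (6) is sound and in places genuinely different from the paper's: you pin down $y_4,y_6,x_2,x_4$ as Stiefel--Whitney classes of explicit representation bundles, derive (3) from the Whitney formula for $\xi|_{BH}\cong\nu\oplus\mu'$, and obtain $Sq^1y_4=0$, $Sq^1y_6=y_1y_6$ from the Wu formula together with the injectivity of $\pi^*$ in degree $5$. The paper instead computes $\pi^*$ via naturality of the Serre spectral sequence and the Chern product formula, and gets the $Sq^1$-action and $w(\tau)$ by restricting to a detecting elementary abelian subgroup $\Z_2^4\subset U(3)\rtimes\Z_2$. Where your route works it is arguably cleaner, since the Whitney-sum bookkeeping replaces the explicit character computations.

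The gap is in part (5). The assertion that ``$c(\C^2\otimes\det)\equiv c(\C^2)\pmod 2$ and the determinant twist is invisible mod $2$, so $w(\tau)=w(\nu)$'' is an argument only over the fibre $BS(U(2)U(1))$, where $\tau$ and $\nu$ really are realified complex bundles; over all of $BH$ they are merely real bundles (the $\Z_2$-factor acts anti-linearly), and the mod-$2$ Chern class computation says nothing about the $x_1$-divisible part of $w(\tau)$. Restriction to the fibre together with restriction to the section $B\Z_2$ detects $H^2(BH)$ but not $H^3$ or $H^4$: the coefficient of $x_1x_2$ in $w_3(\tau)$ and that of $x_1^2x_2$ in $w_4(\tau)$ are invisible to both restrictions. (Degree $3$ can be rescued by $w_3(\tau)=Sq^1w_2(\tau)$, since $w_1(\tau)=0$; degree $4$ cannot.) That the missing terms genuinely require a computation is shown by Lemma \ref{analog}: for the larger group $U(2)U(1)\rtimes\Z_2$ one finds $w_4(\tilde{\tau})=a_4+b_2^2+a_2b_2$, which is \emph{not} the top class of the untwisted bundle; the extra terms $b_2^2+a_2b_2$ die only after restricting to $S(U(2)U(1))$, where $b_2$ and $a_2$ both map to $x_2$. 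To close the gap, restrict to the detecting subgroup $\Z_2^3\subset S(U(2)U(1))\rtimes\Z_2$ (diagonal sign matrices of determinant one, together with conjugation), decompose the isotropy representation into real one-dimensional pieces there, and check that the resulting product of linear classes pulls back from $1+(x_1^2+x_2)+x_1x_2+x_4$ --- which is exactly the paper's computation carried out one group smaller.
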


We chose as generators of the cohomology of $BU(i)$ and $BSU(i)$ the Chern classes of the corresponding universal bundles.

We begin with the first three statements of Theorem \ref{a}. Apply the Leray-Serre spectral sequence to the following fiber bundles and the map $\pi$:
\begin{equation}\label{faserung}
\xgrid=22mm
\ygrid=7mm
\cellpush=6pt
\Diagram
BS(U(2)U(1))                &  \rTo^{\tilde{\pi}}  &  BSU(3)                \\
\dTo                        &                      &  \dTo                  \\
B(S(U(2)U(1))\rtimes \Z_2)  &  \rTo^{\pi}          &  B(SU(3)\rtimes \Z_2)  \\
\dTo                        &                      &  \dTo                  \\
B\Z_2                       &  \rTo^{\id}          &  B\Z_2.                \\
\endDiagram
\end{equation}
The spectral sequences collapses in both cases and one deduces
\begin{align*}
H^*(B(S(U(2)U(1))\rtimes\Z_2))&\cong H^*(B\Z_2)\ot H^*(BS(U(2)U(1))) \\
                              &\cong \Z_2[x_1]\ot\Z_2[x_2,x_4]\quad \text{and}\\
H^*(B(SU(3)\rtimes\Z_2))&\cong H^*(B\Z_2)\ot H^*(BSU(3)) \\
                        &\cong \Z_2[y_1]\ot\Z_2[y_4,y_6].
\end{align*}
Here we used the homotopy equivalence $BS(U(2)U(1))\xto{incl}B(U(2)U(1))\xto{proj}BU(2)$ to get generators of $H^*(BS(U(2)U(1))$. Let us describe $\tilde{\pi}$. Writing $H^*(B(U(2)U(1)))\cong\Z_2[a_2,a_4,b_2]$, then the inclusion $BS(U(2)U(1))\hto B(U(2)U(1))$ induces, by construction, $a_2\mapsto x_2$ and $a_4\mapsto x_4$. Besides, $b_2\mapsto x_2$ as one sees by taking an inclusion $BU(1)\hto BS(U(2)(U(1)))$ into the second factor. Consider now the inclusions
\[
\xgrid=17mm
\ygrid=7mm
\cellpush=6pt
\Diagram
BS(U(2)U(1))        &  \rTo   &  B(U(2)U(1))  \\
\dTo^{\tilde{\pi}}  &         &  \dTo^j  \\
BSU(3)              &  \rTo   &  BU(3)       \\
\endDiagram
\]
and note that we understand $j$ via the product formula for Chern classes. We conclude $\tilde{\pi}^*(y_4)= x_2^2+x_4$ resp. $\tilde{\pi}(y_6)= x_2x_4$ and understand $\pi$ by exploiting the naturality of the spectral sequence.


The fourth point og Theorem \ref{a} follows from
\begin{lem}\label{l32} Let $H^*(B(U(3)\rtimes \Z_2))\cong\Z_2[c_1,c_2,c_4,c_6]$. Then we have
\[
Sq^1(c_4)=0\quad \text{and}\quad Sq^1(c_6)=c_1c_6.
\]
\end{lem}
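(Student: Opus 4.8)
The plan is to understand the cohomology $H^*(B(U(3)\rtimes\Z_2))$ explicitly enough to compute the action of $Sq^1$ on the generators $c_4$ and $c_6$. First I would analyze the fibration
\[
BU(3)\to B(U(3)\rtimes\Z_2)\to B\Z_2,
\]
whose Leray-Serre spectral sequence (with $\Z_2$-coefficients) one checks to collapse, giving $H^*(B(U(3)\rtimes\Z_2))\cong H^*(B\Z_2)\otimes H^*(BU(3))$ as modules; writing $c_1$ for the generator of $H^1(B\Z_2)$ and $c_2,c_4,c_6$ for (pullbacks of) the Chern classes, one identifies these with the polynomial generators of the claim. The subtlety is that the $\Z_2$-action on $U(3)$ by complex conjugation acts on $H^*(BU(3))$ by sending a Chern class $c_i^{U(3)}$ to $(-1)^i c_i^{U(3)}\equiv c_i^{U(3)}$ mod $2$; so the action on mod-$2$ cohomology is trivial and the above tensor decomposition is as rings only up to the extension data encoded by the differentials — which vanish — so it is a genuine ring isomorphism. (This is exactly the pattern used above for $B(SU(3)\rtimes\Z_2)$.)

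Next I would compute $Sq^1$. On $H^*(B\Z_2)=\Z_2[c_1]$ we have the standard fact $Sq^1(c_1)=c_1^2$. The classes $c_2,c_4,c_6$ come (via the product formula for Chern classes, as in the discussion of $j\colon B(U(2)U(1))\to BU(3)$ above) from integral Chern classes of a genuine complex bundle over $B(U(3)\rtimes\Z_2)$ obtained by pushing the universal bundle around; the point is that $Sq^1$ on the mod-$2$ reduction of an integral class is the mod-$2$ reduction of the integral Bockstein, hence $Sq^1(\bar x)$ is the reduction of $\beta(\bar x)$. For a Chern class $c_i$ of a complex bundle, $Sq^1(c_i)=c_1 c_i$ when $i$ is odd and $Sq^1(c_i)=0$ when $i$ is even — this follows from the Wu formula $Sq^1 c_i = c_1 c_i + (i)c_{i+1}$ read mod $2$ (equivalently: the integral reduction $c_i$ of an even-degree Chern class is itself an integral reduction, so its Bockstein vanishes). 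Applying this with the grading where $c_2$ sits in degree $2$, $c_4$ in degree $4$, $c_6$ in degree $6$ (the "Chern degree" being $1,2,3$), we get $Sq^1(c_4)=0$ and $Sq^1(c_6)=c_1 c_6$, as claimed. (The extra subscript-vs-Chern-index bookkeeping is the only place one must be careful: $c_4$ is the degree-$4$ class, corresponding to the second Chern class, hence even; $c_6$ corresponds to the third Chern class, hence odd.)

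Finally I would deduce the fourth statement of Theorem \ref{a} from Lemma \ref{l32}: the map $B(SU(3)\rtimes\Z_2)\to B(U(3)\rtimes\Z_2)$ induced by $SU(3)\hookrightarrow U(3)$ pulls $c_1$ to $y_1$ (since the determinant is killed on $SU(3)$, the $B\Z_2$-factor matches up), and pulls $c_4,c_6$ to $y_4,y_6$ respectively by naturality of the Chern-class generators. Since $Sq^1$ is natural and $Sq^1(y_1)=y_1^2$ holds in $H^*(B\Z_2)\subset H^*(B(SU(3)\rtimes\Z_2))$, we read off $Sq^1(y_4)=0$ and $Sq^1(y_6)=y_1 y_6$.

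The main obstacle I anticipate is purely a matter of bookkeeping rather than of depth: one must pin down precisely that the generators $c_4,c_6$ really are (mod-$2$ reductions of) honest integral Chern classes of a complex vector bundle over $B(U(3)\rtimes\Z_2)$, so that the Bockstein computation applies; the semidirect product with $\Z_2$ (complex conjugation) could a priori obstruct the existence of such a bundle, but since conjugation fixes mod-$2$ Chern classes the spectral-sequence collapse makes this harmless. Once that is in place, everything reduces to the Wu formula for $Sq^1$ on Chern classes together with $Sq^1 c_1 = c_1^2$.
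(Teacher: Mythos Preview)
Your argument has a genuine gap at the central step. You want $c_2,c_4,c_6$ to be mod-$2$ reductions of integral Chern classes of a complex vector bundle over $B(U(3)\rtimes\Z_2)$, but no such bundle exists: the $\Z_2$ acts on $\C^3$ by complex conjugation, which is \emph{anti}-linear, so the universal object is only a real rank-$6$ bundle. Integrally, conjugation sends the $i$-th Chern class to $(-1)^i$ times itself, so the third Chern class is \emph{not} $\Z_2$-invariant and $c_6$ is not the reduction of an integral class on the total space. Indeed, the very conclusion you are aiming for, $Sq^1(c_6)=c_1c_6\neq 0$, shows that $c_6$ cannot be an integral reduction (for such a class the Bockstein $Sq^1$ vanishes). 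Your ``Wu formula'' $Sq^1 c_i=c_1 c_i+(i)c_{i+1}$ is not a valid identity for Chern classes---it is dimensionally inconsistent once $c_1$ denotes a first Chern class (degree $2$)---and for an honest complex bundle one has $Sq^1$ of every mod-$2$ Chern class equal to zero, regardless of parity of $i$. So the parity distinction you invoke does not come from Chern-class theory; you are tacitly using that $c_1$ here is the degree-$1$ class from $B\Z_2$, which has nothing to do with a Chern class.

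The paper avoids this by a different mechanism: it restricts along an explicit inclusion $i\colon\Z_2^4\hookrightarrow U(3)\rtimes\Z_2$, checks that $(Bi)^*$ is injective, writes $(Bi)^*c_2,(Bi)^*c_4,(Bi)^*c_6$ as symmetric functions in the auxiliary classes $A,B,C\in H^*(B\Z_2^4)$ satisfying $Sq^1A=A\delta$ etc., and then computes $Sq^1$ directly in $\Z_2[\ga,\gb,\gc,\gd]$. A salvageable variant of your idea would be to work instead with the Stiefel--Whitney classes of the real $6$-plane bundle associated to $U(3)\rtimes\Z_2\subset O(6)$: one has $c_1=w_1$ and $c_6=w_6$, and the Wu formula $Sq^1 w_6=w_1w_6+w_7=w_1w_6$ (since $w_7=0$ for a rank-$6$ bundle) gives $Sq^1 c_6=c_1c_6$ immediately; but for $Sq^1(c_4)=0$ you would still need to identify $c_4$ in terms of the $w_j$ and compute, which brings you back to something close to the paper's restriction argument.
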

\begin{proof}
Write $H^*(B\Z_2^4)\cong\Z_2[\ga,\gb,\gc,\gd]$. We put
\[
A:=\ga^2+\ga\gd,\ B:=\gb^2+\gb\gd,\ C:=\gc^2+\gc\gd
\]
and observe
\begin{equation}\label{arg9}
Sq^1(A)=A\gd,\ Sq^1(B)=B\gd\ \text{and}\ Sq^1(C)=C\gd.
\end{equation}
Consider now $(U(3)\rtimes\Z_2)\subset O(6)$. The inclusion
\begin{align*}
&i\colon\Z_2^4\hto(U(3)\rtimes\Z_2),\\
&(-1,1,1,1)\mapsto\left(\begin{smallmatrix}-1&0&0&0&0&0\\0&-1&0&0&0&0\\0&0&1&0&0&0\\
                                           0&0&0&1&0&0\\0&0&0&0&1&0\\0&0&0&0&0&1\end{smallmatrix}\right),\
(1,-1,1,1)\mapsto\left(\begin{smallmatrix}1&0&0&0&0&0\\0&1&0&0&0&0\\0&0&-1&0&0&0\\
                                           0&0&0&-1&0&0\\0&0&0&0&1&0\\0&0&0&0&0&1\end{smallmatrix}\right),\\
&(1,1,-1,1)\mapsto\left(\begin{smallmatrix}1&0&0&0&0&0\\0&1&0&0&0&0\\0&0&1&0&0&0\\
                                           0&0&0&1&0&0\\0&0&0&0&-1&0\\0&0&0&0&0&-1\end{smallmatrix}\right),\
(1,1,1,-1)\mapsto\left(\begin{smallmatrix}1&0&0&0&0&0\\0&-1&0&0&0&0\\0&0&1&0&0&0\\
                                           0&0&0&-1&0&0\\0&0&0&0&1&0\\0&0&0&0&0&-1\end{smallmatrix}\right)
\end{align*}
induces the map $(Bi)^*\colon H^*(B(U(3)\rtimes\Z_2))\to H^*(B\Z_2^4)$,
\[
c_1\mapsto \gd,\ c_2\mapsto A+B+C,\ c_4\mapsto AB+BC+AC,\ c_6\mapsto ABC.
\]
This follows from the observation that the inclusion
\[
\Z_2^2\to U(1)\rtimes\Z_2\cong O(2),\quad
(-1,1)\mapsto \left(\begin{smallmatrix}-1&0\\0&-1\end{smallmatrix}\right),\quad
(1,-1)\mapsto \left(\begin{smallmatrix}1&0\\ 0&-1\end{smallmatrix}\right)
\]
with $H^*(B\Z_2^2)\cong\Z_2[u]\otimes\Z_2[v]$ and $H^*(BO(2))\cong\Z_2[w_1,w_2]$ induces the map
\[
w_1\mapsto v\quad \text{and}\quad w_2\mapsto u^2+uv.
\]
Now with \ref{arg9} we obtain
\begin{align*}
(Bi)^*(Sq^1(c_4))&=Sq^1((Bi)^*c_4)              & (Bi)^*(Sq^1(c_6))&=Sq^1((Bi)^*c_6)\\
                 &=Sq^1(AB+BC+AC)               &                  &=Sq^1(ABC)\\
                 &=0,                           &                  &=(Bi)^*(c_1c_6).
\end{align*}
Since $(Bi^*)$ is injective the lemma holds.
\end{proof}


To study the Stiefel-Whitney class of the tangent bundle along the fiber $\tau$ let $\mathfrak{h}^{\bot}\subset\mathfrak{g}$ be an $\ad H$-invariant subspace complementary to $\mathfrak{h}$. It is not difficult to see that $\tau$ is isomorphic to
\[
EG\x_H\mathfrak{h}^{\bot} \to EG/H=BH.
\]

First we show
\begin{lem}\label{analog} Let $B(U(2)U(1)\rtimes\Z_2)\cong\Z_2[d_1,a_2,b_2,a_4]$ and $\tilde{\tau}$ be the tangent bundle along the fiber of
\[
\C\text{\emph{P}}^2\hto B(U(2)U(1)\rtimes\Z_2) \to B(U(3)\rtimes\Z_2).
\]
Then we have:
\[
w(\tilde{\tau})=1+(d_1^2+a_2)+d_1a_2+(a_4+b_2^2+a_2b_2).
\]
\end{lem}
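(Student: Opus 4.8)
The plan is to compute $w(\tilde\tau)$ by pulling everything back to the classifying space of an elementary abelian $2$-group, in exact parallel with the proof of Lemma~\ref{l32}. First I would nail down the bundle: by the discussion preceding the lemma, $\tilde\tau\cong E(U(3)\rtimes\Z_2)\x_H\mathfrak{h}^{\bot}$ with $H=U(2)U(1)\rtimes\Z_2$, and for the homogeneous space $\CP^2=U(3)/(U(2)U(1))$ one has $\mathfrak{h}^{\bot}\cong T_{[e_3]}\CP^2\cong\mathrm{Hom}_{\C}(\C e_3,\langle e_1,e_2\rangle)$ as a real $H$-module, where $U(2)U(1)$ acts $\C$-linearly in the obvious way while the generator of $\Z_2$ acts by the differential of complex conjugation, i.e.\ $\C$-\emph{antilinearly}, by $\phi\mapsto(v\mapsto\overline{\phi(\bar v)})$.

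Next, mimicking Lemma~\ref{l32}, I would use the inclusion $i\colon\Z_2^4\hto U(2)U(1)\rtimes\Z_2$ generated by the sign changes $s_1,s_2,s_3$ on the lines $\C e_1,\C e_2,\C e_3$ (with $s_1,s_2\in U(2)$, $s_3\in U(1)$) together with complex conjugation $c$; since $c$ centralizes the $s_j$ this is a genuine subgroup, $\Z_2^4\cong\langle s_1,s_2,s_3\rangle\times\langle c\rangle$. Writing $H^*(B\Z_2^4)\cong\Z_2[\ga,\gb,\gc,\gd]$ with $\ga,\gb,\gc$ dual to $s_1,s_2,s_3$ and $\gd$ dual to $c$, and putting $A=\ga^2+\ga\gd$, $B=\gb^2+\gb\gd$, $C=\gc^2+\gc\gd$ as in Lemma~\ref{l32}, I would check — using the collapse of the Leray--Serre spectral sequence of $BU(2)U(1)\to B(U(2)U(1)\rtimes\Z_2)\to B\Z_2$ and restriction to the fibre $B\Z_2^3\subset B\Z_2^4$ — that for the appropriate normalization of the generators in Theorem~\ref{a} one has $(Bi)^*(d_1,a_2,a_4,b_2)=(\gd,\,A+B,\,AB,\,C)$, and that $(Bi)^*$ is injective (the four images are algebraically independent: $\ga,\gb,\gc$ are each quadratic over $\Z_2[\gd,A]$, $\Z_2[\gd,B]$, $\Z_2[\gd,C]$ and $A,B$ are quadratic over $\Z_2[A+B,AB]$, so the four elements still have transcendence degree $4$).

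Then comes the computation. Restricting $\mathfrak{h}^{\bot}=\mathrm{Hom}_{\C}(\C e_3,\C e_1)\oplus\mathrm{Hom}_{\C}(\C e_3,\C e_2)$ to $\Z_2^4$ and splitting each real $2$-plane into eigenlines: on $\mathrm{Hom}_{\C}(\C e_3,\C e_1)$ the scalars $s_1$ and $s_3$ act by $-1$, $s_2$ acts trivially, and $c$ acts as complex conjugation, so the real axis has $w_1=\ga+\gc$ and the imaginary axis has $w_1=\ga+\gc+\gd$, giving total class $(1+\ga+\gc)(1+\ga+\gc+\gd)=1+\gd+(A+C)$; likewise $\mathrm{Hom}_{\C}(\C e_3,\C e_2)$ gives $1+\gd+(B+C)$. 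Hence $w((Bi)^*\tilde\tau)=(1+\gd+A+C)(1+\gd+B+C)$, which in characteristic $2$ expands to
\[
1+(A+B+\gd^2)+\gd(A+B)+\bigl(AB+C(A+B)+C^2\bigr),
\]
i.e.\ exactly $(Bi)^*$ applied to $1+(d_1^2+a_2)+d_1a_2+(a_4+b_2^2+a_2b_2)$. By injectivity of $(Bi)^*$ this is the asserted identity.

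The step I expect to be the main obstacle is the bookkeeping around the $\Z_2$-factor: one must use that the tangent action of conjugation is antilinear, keep track of how it sits on each $\mathrm{Hom}_{\C}(\C e_3,\C e_j)$, and — in tandem — fix the normalization of $a_2,b_2,a_4$ (the "appropriate choice" of Theorem~\ref{a}) so that $(Bi)^*$ really produces the clean values above; a careless treatment would leave spurious terms such as $\gd\ga$ that cannot be absorbed into the polynomial generators. The remaining ingredients — collapse of the spectral sequence, injectivity of $(Bi)^*$, and the final expansion — are routine and run exactly as in Lemma~\ref{l32}.
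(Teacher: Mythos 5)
Your proposal is correct and takes essentially the same route as the paper: the paper likewise realizes $\tilde\tau$ as $E(U(2)U(1)\rtimes\Z_2)\x_{U(2)U(1)\rtimes\Z_2}\mathfrak{u(2)u(1)}^{\bot}$ with $\Z_2$ acting by conjugation, restricts along the same $\Z_2^4\hto U(2)U(1)\rtimes\Z_2$ with $(Bi)^*(d_1,a_2,a_4,b_2)=(\gd,A+B,AB,C)$, decomposes the restricted representation into the four real eigenlines $(R_1\ot R_3^{-1})\oplus(R_1\ot R_3^{-1}\ot R_4)\oplus(R_2\ot R_3^{-1})\oplus(R_2\ot R_3^{-1}\ot R_4)$, and expands $(1+\ga+\gc)(1+\ga+\gc+\gd)(1+\gb+\gc)(1+\gb+\gc+\gd)$ to the same total class. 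Your explicit injectivity argument for $(Bi)^*$ is a small addition the paper leaves implicit.
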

\begin{proof}
The Lie algebra $\mathfrak{u(3)}$ of $U(3)\rtimes\Z_2$ can be identified with the complex, skew-Hermitian 3x3-matrices. Let $\mathfrak{u(2)u(1)}^{\bot}\cong\C^2$ be
\[
\begin{pmatrix} 0 & 0 & v_1 \\ 0 & 0 & v_2 \\ \overline{v_1} & \overline{v_2} & 0 \end{pmatrix},\
v=\begin{pmatrix} v_1 \\  v_2 \end{pmatrix}\in\C^2,
\]
which is an $\ad U(2)U(1)$-invariant subspace of $\mathfrak{u(3)}$ complementary to $\mathfrak{u(2)u(1)}$.
Now $\tilde{\tau}$ is given by
\[
E(U(2)U(1)\rtimes\Z_2)\x_{U(2)U(1)\rtimes\Z_2}\mathfrak{u(2)u(1)}^{\bot}
\]
with
\[
(P,z)\in U(2)U(1)\mapsto(\C^2\ni v\mapsto Pvz^{-1})
\]
and $\rtimes\Z_2$ as complex conjugation.

Let further $B\Z_2^4\cong\Z_2[\ga,\gb,\gc,\gd]$ and $A=\ga^2+\ga\gd, B=\gb^2+\gb\gd, C=\gc^2+\gc\gd$.
Similarly to the proof of statement 4 we consider
\[
(U(2)U(1)\rtimes\Z_2)\subset O(6)\quad\text{and}\quad i\colon\Z_2^4\hto((U(2)U(1))\rtimes\Z_2).
\]
Then $i$ induces $(Bi)^*\colon H^*(B(U(2)U(1)\rtimes\Z_2))\to H^*(B\Z_2^4)$,
\begin{equation}\label{lem1}
d_1\mapsto \gd,\ a_2\mapsto A+B,\ a_4\mapsto AB,\ b_2\mapsto C.
\end{equation}
The representation of $U(2)U(1)\rtimes\Z_2$ restricted to $\Z_2^4$ is given by
\[
\mathfrak{u(2)u(1)}^{\bot}|_{\Z_2^4}\cong(R_1 \ot R_3^{-1}) \oplus (R_1\ot R_3^{-1}\ot R_4) \oplus (R_2\ot R_3^{-1}) \oplus (R_2\ot R_3^{-1}\ot R_4),
\]
where $R_i$ denotes the one-dimensional real representation
\[
\Z_2^4\to Aut(\R), (\rho_1,\rho_2,\rho_3,\rho_4)\mapsto (x\mapsto \rho_ix).
\]
Hence
\begin{align*}
w(i^*(\tilde{\tau}))&=(1+\ga+\gc)(1+\ga+\gc+\gd)(1+\gb+\gc)(1+\gb+\gc+\gd)\\
                    &=1+(A+B+\gd^2)+((A+B)\gd)+(AB+C^2+(A+B)C).
\end{align*}
Combining this with \ref{lem1} yields Lemma \ref{analog}.
\end{proof}

The claim of point 5 follows since the bundle map
\[
incl\colon B(S(U(2)U(1))\rtimes\Z_2)\to B(U(2)U(1)\rtimes\Z_2)
\]
induces a map $\tau\to\tilde{\tau}$. We have $incl^*(d_1)=x_1$ and finally
\begin{align*}
w(\tau)&=incl^*(w(\tilde{\tau}))\\
       &=1+(x_1^2+x_2)+x_1x_2+x_4.
\end{align*}


We address the last statement of Theorem \ref{a}. Since the Leray-Serre spectral sequence associated to $\CP^2\hto BH\to BG$ collapses $H^*(BH)$ becomes via $\pi$ a free $H^*(BG)$-module with basis $\{1,x_2,x_2^2\}$ (cf. \cite{maccl}, proof of Theorem 5.10). Thus any element $x\in H^*(BH)$ has an unique description $a(x)1+b(x)x_2+c(x)x_2^2$ with $a(x),\, b(x),\, c(x) \in H^*(BG)$.

The integration along the fiber $\pi_!\colon H^n(BH)\to H^{n-4}(BG)$ is given on the basis elements as follows: For dimensional reasons we have $\pi_!(1)=\pi_!(x_2)=0$. Since in the degree of the dimension of the fiber $\pi_!$ can be identified with the inclusion $H^4(BH)\to H^4(\CP^2)$, i.e. $\pi_!$ is surjective, we conclude that $\pi_!(x_2^2)=1\in H^0(BG)$. Then we compute
\begin{align*}
x_2^{2n}  &=(x_2^2+x_4)^{n-1}\cdot x_2^2+ (x_2x_4)  \cdot \textstyle{\sum_{s,t}}(x_2^s+x_4^t)\quad (\text{for}\ s,t\geq 0)\\
          &=\pi^*(y_4^{n-1}) \cdot x_2^2+ \pi^*(y_6)\cdot \textstyle{\sum_{s,t}}(x_2^s+x_4^t),\\
x_2^{2n+1}&=(x_2^2+x_4)^{n}  \cdot x_2+   (x_2x_4)  \cdot \textstyle{\sum_{s',t'}}(x_2^{s'}+x_4^{t'})\quad (\text{for}\ s',t'\geq 0)\\
          &=\pi^*(y_4^{n})   \cdot x_2+   \pi^*(y_6)\cdot \textstyle{\sum_{s',t'}}(x_2^{s'}+x_4^{t'}),\\
x_4^n     &=(x_2^2+x_4)^{n}  \cdot 1+     (x_2x_4)  \cdot \textstyle{\sum_{s'',t''}}(x_2^{s''}+x_4^{t''}) + x_2^{2n}\quad (\text{for}\ s'',t''\geq 0)\\
          &=\pi^*(y_4^{n})   \cdot 1+     \pi^*(y_6)\cdot \textstyle{\sum_{s'',t''}}(x_2^{s''}+x_4^{t''})\\
          &+\pi^*(y_4^{n-1}) \cdot x_2^2+ \pi^*(y_6)\cdot \textstyle{\sum_{s,t}}(x_2^s+x_4^t).
\end{align*}
Since $\pi_!$ is a $H^*(BG)$-module map the claim follows.

\section{MSO-module spectra}\label{smso}

The proofs of Theorem \ref{wide} and \ref{adam} require an understanding of $H_*(MSO\wedge\Sigma^4BG_+)$ and $H_*(MSO)$ as comodules over $A_*$. An important observation will be that $MSO\wedge\Sigma^4BG_+$ and $MSO$ are $MSO$-module spectra.

We shall consider $\Z_2$-vector spaces which carry additional structures as modules resp. comodules over algebras resp. coalgebras. The general algebraic context is explained in \cite{milmo}. Provided that no ring is mentioned we understand tensor products always over $\Z_2$. As usual let $A^*=H^*(H\Z_2)$ denote the Steenrod algebra of all stable cohomological operations. $A^*$ becomes via the spectrum multiplication $H\Z_2\wedge H\Z_2\to H\Z_2$ a coalgebra and, as one can show, a Hopf algebra. Its dual $A_*$ is a polynomial algebra
\[
A_*\cong\Z_2[\zeta_1,\zeta_2,\dots],\quad deg(\zeta_i)=2^i-1
\]
with comultiplication
\[
\zeta_a\mapsto\sum_{b+c=a}\zeta_b\ot\zeta_c^{2^b}.
\]
Here the $\zeta_i$ denote the Hopf algebra conjugates of Milnor's generator $\xi_i$ (cf. \cite{switzer}, Theorem 18.20).

For the following description of $H_*(MSO)$ and $H^*(MSO)$ we define the subalgebra and $A_*$-comodule $L_*:=\Z_2[\zeta_1^2,\zeta_2,\zeta_3,\dots]\subset A_*$ and the quotient Hopf algebra $A(0)_*:=A_*/(\zeta_1^2,\zeta_2,\zeta_3,\dots)$. Then one has: $A_*\boxempty_{A(0)_*}\Z_2= L_*$, where $\boxempty_{A(0)_*}$ denotes the cotensor product of $A(0)_*$-comodules. Dually we have the quotient coalgebra and $A^*$-module $L^*:=A^*/A^*Sq^1$ and the sub-Hopf algebra $A(0)^*:=\bigwedge_{\Z_2}(Sq^1)$. Then one has: $A^*\ot_{A(0)^*}\Z_2= L^*$ (cf. \cite{switzer}, Proposition 20.15).

We remind the reader that the ring spectrum structure of $MSO$ induces the Pontrjagin product resp. -coproduct
\begin{align*}
&H_*(MSO)\ot H_*(MSO)\to H_*(MSO)\quad \text{resp.}\\
&H^*(MSO)\to H^*(MSO)\ot H^*(MSO).
\end{align*}

Although - since working with $\Z_2$-coefficients - statements about the $A_*$-comodule structure resp. algebra structure of the homology dualize to respective ones about the $A^*$-module structure and coalgebra structure of the cohomology, we occasionally switch between the homology and cohomology perspective. We cite the following result \cite{pengelley} about the homology of MSO:
\begin{thm}\label{mso}
For an appropriate choice of elements $u_n\in H_n(MO)$ one has:
\begin{enumerate}
	\item $H_*(MO)\cong\Z_2[u_1,u_2,\dots]$.
	\item Define for $n\geq 2$
	      \[
	      v_n:=\begin{cases}u_{n/2}^2,\      &\text{if}\ n=2^i\\
	                        u_n+u_1u_{n-1},\ &\text{if}\ n=2k,\ k\neq2^i\\
	                        u_n,\            &\text{if}\ n=2k-1,\end{cases}
	      \]
	      then $H_*(MSO)\cong\Z_2[v_2,v_3,\dots]\subset H_*(MO)$.
	\item Let $C\subset H_*(MSO)$ be defined as $\Z_2[v_4,v_5,v_6,v_8,\dots,v_n,\dots]$, $n\geq4$ and $n\neq 2^i-1$. Then $C$ is a
subalgebra resp. $A_*$-subcomodule and $H_*(MSO)$ is as algebra resp. comodule over $A_*$ isomorphic to $L_*\ot C$.
\end{enumerate}
\end{thm}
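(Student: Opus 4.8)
The plan is to obtain the two algebra statements from the Thom isomorphism and then to deduce the $A_*$-comodule statement from the splitting of $MSO$ localized at $2$ into Eilenberg--MacLane spectra, together with a bookkeeping of the $A_*$-coaction on the polynomial generators. For (1), I would use that the Thom isomorphism $\Phi\colon H_*(BO)\xto{\cong}H_*(MO)$ is an isomorphism of graded algebras once $H_*(MO)$ is equipped with the Pontrjagin product coming from the ring spectrum structure of $MO$ -- this holds because the Thom class of the universal bundle is multiplicative with respect to Whitney sum $BO\x BO\to BO$. Since $H_*(BO)$ is the polynomial algebra on the images $a_i\in H_i(BO)$ of the generators of $H_i(BO(1))=H_i(\RP^{\infty})$, $i\geq1$, one simply sets $u_i:=\Phi(a_i)$ and obtains $H_*(MO)\cong\Z_2[u_1,u_2,\dots]$.

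For (2), the forgetful map $j\colon MSO\to MO$ induces $j_*$ on $\Z_2$-homology; dually $j^*$ corresponds under the Thom isomorphisms to the surjection $H^*(BO)=\Z_2[w_1,w_2,\dots]\twoheadrightarrow\Z_2[w_2,w_3,\dots]=H^*(BSO)$ sending $w_1\mapsto0$, so $j_*$ is injective with image a polynomial subalgebra having exactly one generator in each degree $\geq2$. To pin the generators down I would trace the map $H_*(BSO)\to H_*(BO)$ through the known Hopf-algebra structures, equivalently dualize the Wu formula for the Steenrod action on $\Z_2[w_1,w_2,\dots]$; the case distinction in the definition of $v_n$ then records precisely how the polynomial generators of $H_*(BSO)$ sit inside $H_*(BO)$ once the $a_1$-direction is collapsed -- odd generators lift without correction, even non-dyadic ones pick up the term $u_1u_{n-1}$, and in dyadic degrees the generator becomes a square $u_{n/2}^2$. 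This is a direct if somewhat lengthy computation; with the $v_n$ so chosen one gets $H_*(MSO)\cong\Z_2[v_2,v_3,\dots]\subset H_*(MO)$.

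For (3), I would start from Wall's determination of the cobordism ring \cite{wall}, by which $MSO_{(2)}$ is a wedge of suspensions of $H\Z$ and of $H\Z_2$; since $H_*(H\Z;\Z_2)=L_*$ and $H_*(H\Z_2;\Z_2)=A_*$ as $A_*$-comodules, $H_*(MSO)$ is as a comodule a direct sum of suspended copies of $L_*$ and $A_*$. To package this as $L_*\ot C$, split the generating set from (2) as $\{v_n:n\geq2\}=\{v_2\}\cup\{v_{2^i-1}:i\geq2\}\cup\{v_n:n\geq4,\ n\neq2^i-1\}$, so that as algebras $H_*(MSO)\cong\Z_2[v_2,v_3,v_7,v_{15},\dots]\ot C$, and observe that the degrees $2,3,7,15,\dots$ of $v_2,v_3,v_7,v_{15},\dots$ are exactly those of the polynomial generators $\zeta_1^2,\zeta_2,\zeta_3,\dots$ of $L_*$. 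The first key step is to show that the coaction $\nu\colon H_*(MSO)\to A_*\ot H_*(MSO)$ sends the sub-comodule-algebra generated by $v_2$ and the $v_{2^i-1}$ isomorphically onto $L_*$ with its standard comodule structure, matching $v_2\mapsto\zeta_1^2$ and $v_{2^i-1}\mapsto\zeta_i$ and reproducing the comultiplication $\zeta_a\mapsto\sum_{b+c=a}\zeta_b\ot\zeta_c^{2^b}$; I would check this by computing $\nu$ on these generators, a computation that via $j_*$ reduces to the $A_*$-coaction on $H_*(MO)=H_*(BO)$, hence to dualizing the Steenrod action on $H^*(BO)$. The second key step is to show that $C$ is closed under $\nu$, so that it is an $A_*$-subcomodule, by checking $\nu$ on the generators $v_n$ with $n\geq4$, $n\neq2^i-1$ and verifying that modulo the ideal generated by $v_2$ and the $v_{2^i-1}$ it lands in $A_*\ot C$. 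Then $L_*\ot C\to H_*(MSO)$, being an algebra isomorphism by construction and a restriction of the multiplication of the comodule algebra $H_*(MSO)$, is automatically an isomorphism of $A_*$-comodule algebras.

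I expect the main obstacle to be step (3), and within it the explicit bookkeeping of the coactions: the coaction formulas for the $v_n$ are a priori controlled only modulo decomposables and modulo terms of lower Adams filtration, so the real work lies in choosing the generators precisely enough that the ``$\zeta$-like'' ones $v_2,v_{2^i-1}$ reproduce the $L_*$-comultiplication on the nose while every other $v_n$ is forced to stay inside $C$.
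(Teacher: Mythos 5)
First, a point of comparison: the paper does not prove Theorem \ref{mso} at all --- it is quoted verbatim from Pengelley [Pen82] (``We cite the following result...''), so what you have written is an outline of a proof of Pengelley's theorem itself, not an alternative to an argument in the paper. Within that outline there is one step that is concretely wrong. In (1) you set $u_i:=\Phi(a_i)$, where $a_i\in H_i(BO)$ is the image of the generator of $H_i(BO(1))$; with this choice the formulas of (2) fail. Indeed, restricting to $BO(1)$ gives $\langle a_n,w_1^n\rangle=1$, whereas $\mathrm{im}\,(H_n(BSO)\to H_n(BO))$ is the annihilator of $\ker(H^n(BO)\to H^n(BSO))$, which is the degree-$n$ part of the ideal $(w_1)$ and hence contains $w_1^n$. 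So $\Phi(a_n)$ is \emph{never} in $H_*(MSO)\subset H_*(MO)$, contradicting $v_n=u_n$ for $n$ odd. Concretely, in degree $3$ the image of $H_3(BSO)$ is spanned by $a_1^3+a_1a_2+a_3$, not by $a_3$, so $v_3=u_3$ is false for your $u_3$. The clause ``for an appropriate choice of $u_n$'' is not decorative: producing polynomial generators of $H_*(MO)$ for which the clean case distinction in (2) holds is already a nontrivial part of Pengelley's work, and your proposal does not produce them.

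For (3), your reduction of the problem is correct but the substance is deferred. Wall's $2$-local splitting of $MSO$ gives $H_*(MSO)$ as a direct sum of suspensions of $L_*$ and $A_*$ \emph{as comodules}, which is strictly weaker than the asserted isomorphism $H_*(MSO)\cong L_*\ot C$ of comodule \emph{algebras} with $C$ the specified subalgebra. You correctly identify that everything hinges on two verifications: that the coaction on $v_2$ and the $v_{2^i-1}$ reproduces $\zeta_a\mapsto\sum_{b+c=a}\zeta_b\ot\zeta_c^{2^b}$ on the nose (any error term lying in $C$ destroys the comodule-algebra map $L_*\to H_*(MSO)$), and that $C$ is coaction-closed. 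But these verifications, together with the simultaneous choice of the $u_n$ making (2) and (3) both true, \emph{are} the theorem --- they occupy Pengelley's paper --- and your proposal explicitly leaves them as ``a direct if somewhat lengthy computation'' and ``the real work.'' As it stands this is a correct map of what must be proved rather than a proof; for the purposes of this paper the appropriate move is the one the author makes, namely to cite [Pen82].
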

We note that the tensor product of algebras becomes an algebra by componentwise multiplication and the tensor product of $A_*$-comodules becomes a $A_*$-comodule via componentwise comultiplication together with the multiplication of $A_*$.

From the theorem it follows that $H_*(MSO)$ is an extended module over $L_*$. The $L_*$-module- and $A_*$-comodule structures are compatible in the sense that the multiplication $L_*\ot H_*(MSO)\to H_*(MSO)$ is a map of $A_*$-comodules. This holds because $L_*\ot L_*\to L_*$ is a $A_*$-comodule map since $L_*\subset A_*$ and $A_*$ is a Hopf algebra. Now we use the structure as $L_*$-module to give a description of $H_*(MSO)$ as a comodule over $A_*$.

$L_*$ is an augmented algebra and we define the $L_*$-indecomposable quotient of a $L_*$-module $M$ by $\overline{M}:=M/\mu(I(L_*)\ot M)= \Z_2\ot_{L_*} M$, where $I(L_*)$ denotes the augmentation ideal and $\mu\colon L_*\ot M\to M$ the module multiplication.

Since $A(0)_*=A_*/I(L_*)$ we see that $I(L_*)$ and hence $\overline{H_*(MSO)}$ become a $A(0)_*$-comodule. By means of Proposition 5.4 from \cite{stolz} one gets
\begin{cor}
$H_*(MSO)$ is as $A_*$-comodule and $L_*$-module isomorphic to $A_*\boxempty_{A(0)_*}\overline{H_*(MSO)}$. Here $A_*\boxempty_{A(0)_*}\overline{H_*(MSO)}\subset A_*\ot\overline{H_*(MSO)}$ carries the structure of an extended $A_*$-comodule and $L_*$-module.
\end{cor}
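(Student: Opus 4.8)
The plan is to recognize this corollary as an instance of the general change-of-Hopf-algebra correspondence underlying \cite{stolz}, Proposition 5.4, and to check that its hypotheses apply to $M:=H_*(MSO)$. Concretely, the assertion is that the functors $N\mapsto A_*\boxempty_{A(0)_*}N$ and $M\mapsto\overline M$ are mutually inverse between $A(0)_*$-comodules and those $A_*$-comodules which are extended $L_*$-modules with $A_*$-colinear action, and that the unit of this adjunction is the natural map described below.

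First I would exhibit the comparison map. Let $\psi\colon M\to A_*\ot M$ be the $A_*$-coaction and $q\colon M\to\overline M=\Z_2\ot_{L_*}M$ the projection onto $L_*$-indecomposables. I claim the composite
\[
f\colon M\xto{\psi}A_*\ot M\xto{\id\ot q}A_*\ot\overline M
\]
factors through $A_*\boxempty_{A(0)_*}\overline M\subseteq A_*\ot\overline M$ and is a map of $A_*$-comodules and of $L_*$-modules, where $L_*=A_*\boxempty_{A(0)_*}\Z_2$ acts on the target through the $A_*$-factor. Both statements reduce to a diagram chase using coassociativity of $\psi$ together with the fact, established in the paragraph preceding the corollary, that $L_*\ot M\to M$ is $A_*$-colinear: the latter is exactly what forces the two structure maps $A_*\ot\overline M\rightrightarrows A_*\ot A(0)_*\ot\overline M$ defining the cotensor product to agree after composition with $f$, where $\overline M$ carries the $A(0)_*$-coaction induced from $\psi$ via $A_*\twoheadrightarrow A(0)_*=A_*/I(L_*)$.

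Next I would collect the hypotheses. That $A(0)_*$ is a quotient Hopf algebra of $A_*$ with $L_*=A_*\boxempty_{A(0)_*}\Z_2$ is recalled above from \cite{switzer}, Proposition 20.15; that $M\cong L_*\ot C$ as $A_*$-comodule and $L_*$-module, hence $M$ is an extended $L_*$-module, is part (3) of Theorem \ref{mso}; and $A_*$-colinearity of the $L_*$-action was noted just before the corollary. These are precisely the input of \cite{stolz}, Proposition 5.4 (whose algebraic context is \cite{milmo}), which then yields that $f$ is an isomorphism of $A_*$-comodules and $L_*$-modules. If one prefers to avoid quoting \cite{stolz}, the finish is to reduce $f$ modulo $I(L_*)$: both $M$ and $A_*\boxempty_{A(0)_*}\overline M$ are extended $L_*$-modules (the latter by the general structure of a cotensor product of a Hopf algebra over a quotient, already mentioned), so, everything being connected and of finite type in each degree, a graded Nakayama argument reduces the claim to showing that $\Z_2\ot_{L_*}f$ is an isomorphism. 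Using the identifications $\Z_2\ot_{L_*}A_*\cong A(0)_*$ and $A(0)_*\boxempty_{A(0)_*}\overline M\cong\overline M$, the map $\Z_2\ot_{L_*}f$ becomes the $A(0)_*$-coaction of $\overline M$ followed by the counit of $A(0)_*$, that is, the identity of $\overline M$.

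The main obstacle is the Hopf-algebra bookkeeping: making $f$ well defined as a map into the cotensor product and checking its $L_*$-linearity (left multiplication by the coideal subalgebra $L_*$ on $A_*$ must preserve the cotensor subspace), and, in the self-contained route, verifying that $A_*\boxempty_{A(0)_*}\overline M$ really is extended over $L_*$. Both are soft, but they genuinely use the interplay of $A_*$, its quotient $A(0)_*$, and $L_*=A_*\boxempty_{A(0)_*}\Z_2$; once part (3) of Theorem \ref{mso} and the cited comodule algebra are granted, nothing deeper is required.
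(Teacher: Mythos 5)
Your proposal matches the paper's argument: the paper likewise verifies that $H_*(MSO)\cong L_*\ot C$ is an extended $L_*$-module (Theorem \ref{mso}(3)) whose $L_*$-action is $A_*$-colinear, and then simply invokes Proposition 5.4 of \cite{stolz}. Your optional self-contained finish via the comparison map and graded Nakayama is a reasonable unpacking of that cited proposition, but it goes beyond what the paper itself records.
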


This result can be generalized to $MSO$-module spectra. Let $X$ be a $MSO$-module spectrum with multiplication $\mu\colon MSO\wedge X\to X$. The homology of $X$ becomes a $L_*$-module by
\[
\mu_*\colon H_*(MSO)\ot H_*(X)\to H_*(X)
\]
restricted to $L_*\ot H_*(X)$, $L\subset H_*(MSO)$. Again with Proposition 5.4. from \cite{stolz} we obtain the $A_*$-comodule and $L_*$-module isomorphism
\begin{equation}\label{alg}
H_*(X)\cong A_*\boxempty_{A(0)_*}\overline{H_*(X)}.
\end{equation}
This construction is functoriel: If $X$ and $Y$ are $MSO$-module spectra a $MSO$-module spectrum map $f$ can be written as
\begin{equation}\label{alg2}
H_*(X)\cong A_*\boxempty_{A(0)_*}\overline{H_*(X)}\xto{\id\boxempty\overline{f}}A_*\boxempty_{A(0)_*}\overline{H_*(Y)}\cong H_*(Y).
\end{equation}

\section{The split surjection in homology}\label{splitsurjection}

In this section we prove Theorem \ref{wide}:
\[
\widehat{T}_*\colon H_*(MSO\wedge\Sigma^4BG_+) \to H_*(\widehat{MSO})
\]
is a split surjection of $A_*$-comodules.

Recall the diagram
\[
\xgrid=16mm
\ygrid=7mm
\cellpush=6pt
\Diagram
                       &                     &  \widehat{MSO}&         &     \\
                       & \ruTo^{\widehat{T}} &  \dTo_i       &         &     \\
MSO\wedge\Sigma^4BG_+  & \rTo^T              &  MSO          & \rTo^U  & H\Z.\\
\endDiagram
\]
Since $H_*(H\Z)\cong L_*$ (cf. \cite{maccl}, Theorem 6.19) and $U_*$ is a $A_*$-comodule map which is nontrivial on $H_0$ we conclude that $U_*|_{L_*}$ is a isomorphism.

It follows from the surjectivity of $U_*$ and by considering the long exact sequence
\[
\dots \xto{i_*} H_n(MSO) \xto{U_*} H_n(H\Z) \xto{\partial} H_{n-1}(\widehat{MSO}) \xto{i_*} H_{n-1}(MSO) \xto{U_*} \dots
\]
associated to the cofibration $\widehat{MSO} \to MSO \to H\Z$ that $i_*$ is injective. Hence we can identify $H_*(\widehat{MSO})$ with $\ker U_*$ and have to show that $T_*$ is split surjective onto $\ker U_*$.

\begin{lem}\label{tgegeben} $T$ is given by
\[
MSO\wedge\Sigma^4BG_+ \xto{\id\wedge t} MSO \wedge M(-\tau) \xto{\id\wedge Mv} MSO\wedge MSO \xto{\mu} MSO,
\]
where $t$ denotes the umkehr map \ref{kleintt}, $Mv$ the Thom class of $-\tau$ and $\mu$ the multiplication of $MSO$.
\end{lem}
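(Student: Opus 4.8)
The plan is to unwind the definition of the transfer map given in Definition \ref{tinh} and trace through how it induces the map $T$ on the spectrum level. Recall that $\psi\colon\Omega_{n-4}(BG)\to\Omega_n$ was defined as the composition of the bordism transfer $\pi^!\colon\Omega_{n-4}(BG)\to\Omega_n(BH)$ with the map induced by the collapse $BH\to pt$. By the identification $\Omega_n(A)\cong\pi_n(MSO\wedge A_+)$, the collapse $BH\to pt$ corresponds on spectra to $MSO\wedge BH_+\to MSO\wedge S^0 = MSO$. So the content of the lemma is that the transfer $\pi^!$ is induced on homotopy groups by the composite $\id\wedge t$ followed by $\id\wedge Mv$ (which together realize $t_*$ and the Thom isomorphism $\Psi$ appearing in \ref{tinh}), followed by the multiplication $\mu$ (which realizes the collapse of the $BH_+$-factor after smashing). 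The key is that in the case $X=MSO$, $B=BG$, $E=BH$, $F=\CP^2$, $k=4$, the abstract transfer \ref{tinh} sits over $M(-\tau)$ and $E_+$ via the umkehr map $t$ of \ref{kleintt} and the Thom class $Mv$ of $-\tau$.

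Concretely, I would first recall from \ref{tinh} that $\pi^!$ is the composite $\sigma$ followed by $t_*$ followed by $\Psi$, where $\sigma$ is the suspension isomorphism $X_{n-4}(BG_+)\xrightarrow{\cong}X_n(\Sigma^4 BG_+)$. On the spectrum level, smashing everything with $MSO$ and passing to the stable category, the suspension $\sigma$ is absorbed into the definition of $\Sigma^4\widehat{BG}$; the map $t_*$ is induced by the umkehr map $t\colon\Sigma^4\widehat{BG}\to M(-\tau)$ of \ref{kleintt}, so smashed with $MSO$ it is $\id_{MSO}\wedge t\colon MSO\wedge\Sigma^4 BG_+\to MSO\wedge M(-\tau)$. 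Then I would recall from Section \ref{ctransfer} that the Thom isomorphism $\Psi\colon X_*(M(-\tau))\to X_*(BH_+)$ for $X=MSO$ is realized by the composite $\widetilde\Psi$ of $\id\wedge M(\Delta_*)$, $\id\wedge u\wedge\id$ and $\mu\wedge\id$ — but since we then collapse the $BH_+$-coordinate to a point (the map to $\Omega_n$), the diagonal/collapse $M(\Delta_*)$ followed by collapsing $BH_+$ is homotopic to the identity on $M(-\tau)$, so what survives of $\widetilde\Psi$ is precisely $\id_{MSO}\wedge (Mv)\colon MSO\wedge M(-\tau)\to MSO\wedge MSO$ followed by the multiplication $\mu$ of $MSO$. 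Composing these gives exactly the claimed factorization of $T$.

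I would then verify that the composite I have written does induce $\psi$ on $\pi_n$, which is essentially forced: by construction $T_*$ equals the transfer $\pi^!$ followed by the collapse, and this is $\psi$ by the discussion around \ref{psi}. One should also check the bookkeeping that the suspension coordinates and the exhaustions $B_m\subset B_{m+1}\subset\dots$ (needed since $BG$ is not compact) assemble compatibly into the map $t$ of spectra \ref{kleintt}; this is already granted by the construction in Section \ref{ctransfer}, so no new work is needed here, only the observation that everything is natural in $m$.

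The main obstacle is the careful identification of the Thom isomorphism $\Psi$ on the spectrum level with $\mu\circ(\id\wedge Mv)$ after the $BH_+$-coordinate is collapsed: one must make sure that the diagonal map $M(\Delta_*)\colon M(-\tau)\to M(-\tau)\wedge BH_+$ composed with the projection $BH_+\to S^0$ is genuinely homotopic to the identity of $M(-\tau)$ (it is, because $\Delta$ followed by a projection is the identity on $BH$), and that this homotopy is compatible with the ring-spectrum structure so that $\mu\wedge\id$ becomes simply $\mu$. This is a diagram-chase in the stable category combined with the naturality statement for the transfer recorded after Definition \ref{tinh}; once that compatibility is in place, the lemma follows by composing the pieces.
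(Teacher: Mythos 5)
Your proposal is correct and follows essentially the same route as the paper: unwind Definition \ref{tinh} (umkehr map, diagonal, Thom class, multiplication), compose with the collapse $BH_+\to S^0$, and observe that the collapse cancels the diagonal $M(\Delta_*)$, leaving $\mu\circ(\id\wedge Mv)\circ(\id\wedge t)$. The paper leaves that last cancellation implicit and additionally records why $-\tau$ is $MSO$-orientable (so that $Mv$ exists), but otherwise the arguments coincide.
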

\begin{proof} The transfer $\Omega_{n-4}(BG)\to\Omega_n(BH)$ is induced (cf. Proposition \ref{hint}) by
\begin{align*}
        MSO\wedge\Sigma^4BG_+&\xto{\id\wedge t} MSO\wedge M(-\tau)\\
                             &\xto{\id\wedge M\Delta} MSO\wedge M(-\tau)\wedge BH_+\\
                             &\xto{\id\wedge Mv\wedge\id} MSO\wedge MSO\wedge BH_+\\
                             &\xto{\mu\wedge\id} MSO\wedge BH_+.\\
\end{align*}
Note that $\tau$ resp. $-\tau$ is oriented with respect to $MSO$ because $G$ acts orientation preserving on $\CP^2$ (in fact, each diffeomorphism of $\CP^2$ is orientation preserving as its first Pontrjagin class is non zero). One obtains $T$ after taking the projection $MSO\wedge BH_+\to MSO$. \end{proof}

$T$ is a map of $MSO$-modules. Moreover, since $U$ is a ring spectrum map $H\Z$ becomes via
$MSO\wedge H\Z \xto{U\wedge\id} H\Z\wedge H\Z\xto{mult} H\Z$ a $MSO$-module and also $U$ a map of $MSO$-modules.

By means of \ref{alg2} the induced maps of $T$ and $U$ in homology can be written as
\[
A_*\boxempty_{A(0)_*}\overline{H_*(MSO\wedge\Sigma^4BG_+)} \xto{\id\boxempty \overline{T_*}} A_*\boxempty_{A(0)_*}\overline{H_*(MSO)} \xto{\id\boxempty \overline{U_*}} A_*\boxempty_{A(0)_*}\overline{H_*(H\Z)}.
\]
Now
\[
\overline{U_*}\colon\overline{H_*(MSO)}\to\overline{H_*(H\Z)}=\Z_2
\]
can be identified with the augmentation of the algebra $\overline{H_*(MSO)}$. Bearing in mind that $\ker U_* =A_*\boxempty_{A(0)_*}(\ker\overline{U_*})$ Theorem \ref{wide} follows from

\begin{prop}\label{l} The $A(0)_*$-comodule map
\[
\overline{T_*}\colon \overline{H_*(MSO\wedge\Sigma^4BG_+)} \to \overline{H_*(MSO)}
\]
is split surjective onto the augmentation ideal $I(\overline{H_*(MSO)})$ of $\overline{H_*(MSO)}$.
\end{prop}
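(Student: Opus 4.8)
The plan is to turn $\overline{T_*}$ into a concrete, essentially combinatorial map, prove that it is surjective onto $I(C)$ by an induction on degree, and then extract the splitting from the $A(0)_*$-coactions on the two sides. First I would pin down both sides: by Theorem~\ref{mso}(3), $\overline{H_*(MSO)}\cong C=\Z_2[v_4,v_5,v_6,v_8,\dots]$ as an $A(0)_*$-comodule, and since the $MSO$-module structure on $MSO\wedge\Sigma^4BG_+$ lives on the $MSO$-factor, \ref{alg} gives $\overline{H_*(MSO\wedge\Sigma^4BG_+)}\cong C\otimes\tilde H_*(\Sigma^4BG_+)$ with $\tilde H_*(\Sigma^4BG_+)\cong H_{*-4}(BG)$, the residual action of the subalgebra $C\subset H_*(MSO)$ sitting on the left tensor factor. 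Hence $\overline{T_*}$ is a map of $A(0)_*$-comodules \emph{and} of $C$-modules, so it is determined by $\phi:=\overline{T_*}(1\otimes-)\colon H_{*-4}(BG)\to C$ via $\overline{T_*}(c\otimes\beta)=c\,\phi(\beta)$. Therefore $\im\overline{T_*}$ is the ideal of $C$ generated by $\phi\bigl(H_*(BG)\bigr)$, and by the graded Nakayama lemma proving $\im\overline{T_*}=I(C)$ amounts to showing that every polynomial generator $v_n$ of $C$ (that is, $n\ge4$, $n\neq2^i-1$) lies in $\phi\bigl(H_{n-4}(BG)\bigr)$ modulo decomposables.

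Next I would compute $\phi$. Lemma~\ref{tgegeben} factors $T$ as $\mu\circ(\id\wedge Mv)\circ(\id\wedge t)$, so on homology $\phi$ is the composite of the umkehr map $t_*$ — equivalently, by Propositions~\ref{hint} and~\ref{gint}, the $\Z_2$-dual of the fiber integration $\pi_!$ of Theorem~\ref{a}(6) — with the map induced by the Thom class $Mv$ of $-\tau$, which after the Thom isomorphism is the homology of the classifying map $BH\to BSO$ of $-\tau$ (governed by $\gw(\tau)$ from Theorem~\ref{a}(5)), followed by the reduction $H_*(MSO)\to C$. Carrying out this composition and re-expressing the outcome in Pengelley's generators $v_i$ (Theorem~\ref{mso}) yields an explicit rule sending the class dual to a monomial $y_1^ay_4^by_6^c\in H^*(BG)$ to a concrete polynomial in the $v_i$; this unwinding is the computational heart of the argument.

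With $\phi$ in hand, surjectivity onto $I(C)$ is an induction on degree: for each admissible $n$ one picks a monomial of degree $n-4$ in $y_1,y_4,y_6$ whose dual class $\beta$ satisfies $\phi(\beta)=v_n+(\text{decomposables})$. The powers $y_4^d$ handle the degrees divisible by $4$ via the first case of Theorem~\ref{a}(6), and appending factors of $y_1$ and $y_6$ (using the other cases together with the $Sq^1$-action of Theorem~\ref{a}(4)) reaches the remaining degrees; the degrees $n=2^i-1$ need no argument, as $I(C)$ is zero there. This gives $\im\overline{T_*}=I(C)$.

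For the splitting I would use that $A(0)^*$ is a Frobenius algebra over $\Z_2$, so every bounded-below $A(0)_*$-comodule of finite type is a direct sum of copies of the trivial comodule $\Z_2$ and of the free comodule $A(0)_*$. From the $Sq^1$-computation of Theorem~\ref{a}(4) one reads off such a decomposition of $\tilde H_*(\Sigma^4BG_+)$ (its $Sq^1$-homology being dual to $\Z_2[y_4,y_6^2]$, shifted by $4$), hence of $C\otimes\tilde H_*(\Sigma^4BG_+)$; then one lifts a generator of each summand of $I(C)$ through $\overline{T_*}$ — by freeness for the free summands, and for a trivial summand by correcting an arbitrary lift via the coaction so that it becomes primitive (the obstruction lying in the image of the coaction operator on the source, which is onto in the relevant degree) — and assembles these into an $A(0)_*$-comodule section. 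I expect this last step to be the main obstacle: it requires knowing the $A(0)_*$-comodule structure of $C$ precisely enough (from Pengelley's description) and matching it summand-by-summand against the explicit image of $\overline{T_*}$ from the previous step, so that the local lifts can be made coherent.
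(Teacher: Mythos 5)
Your structural reductions are sound and in fact mirror the paper's argument in dual form: the paper passes to cohomology and reduces the split injectivity of the dual map $\overline{H^*(MSO)}\to\overline{H^*(MSO)}\otimes H^*(\Sigma^4BG_+)$ to two injectivity statements, one on primitives of $\overline{H^*(MSO)}$ (Proposition \ref{prim}) and one on primitives of its $Sq^1$-homology (Proposition \ref{sqprim}); your ``hit each $v_n$ modulo decomposables'' and your splitting criterion are exactly the duals of these. The gap is that both computational pillars are asserted rather than proved, and neither is routine. For surjectivity onto the indecomposables, ``one picks a monomial whose dual class satisfies $\phi(\beta)=v_n+\text{decomposables}$'' is precisely what has to be verified; the paper does this by evaluating $\pi_!\circ\tilde{v}^*$ on the primitive classes $s_n(\gw)$ via Girard's formula, and the existence of a suitable class rests on a genuinely arithmetic fact (Lemma \ref{binom}: for odd $n\neq 2^k-1$ there exist odd $a,b$ with $2a+3b=n$ and $\frac{n(a+b-1)!}{a!b!}$ odd) plus a filtration argument to kill the lower-order terms. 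That the construction must fail exactly in degrees $2^i-1$ shows the existence of these classes is delicate and cannot be waved through with ``append factors of $y_1$ and $y_6$.''

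The more serious gap is in the splitting step. Correcting an arbitrary lift of a trivial-summand generator to a primitive is not guaranteed by the coaction operator on the source being ``onto in the relevant degree'': the actual obstruction is the image of that generator's $Sq^1$-homology class under the connecting homomorphism into the $Sq^1$-homology of $\ker\overline{T_*}$, and its vanishing for all such generators is equivalent to $\overline{T_*}$ inducing a surjection on $Sq^1$-homology --- which is exactly the dual of Proposition \ref{sqprim}. Proving that requires identifying the primitives of $H^*(H^*(BSO),Sq^1)\cong\Z_2[\gw_2^2,\gw_4^2,\dots]$ as the classes $s_{2t,2t}$, computing $s_{2t,2t}\equiv\gw_2^{2t}$ modulo the relevant ideal, and checking $\pi_!(\tilde{v}^*(s_{2t,2t}))=y_4^{t-1}$, which is nonzero in $H^*(H^*(BG),Sq^1)\cong\Z_2[y_4,y_6^2]$ (the content of Lemma \ref{sq1bg} and the computations \ref{s2t}, \ref{gw24}). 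Until these two computations are supplied, what you have is a correct skeleton of the paper's proof rather than a proof.
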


For the upcoming computation we switch to cohomology. Let $M$ be any $A(0)^*=\bigwedge_{\Z_2}(Sq^1)$ module. Since $Sq^1\circ Sq^1=0$ we can regard $Sq^1$ as a differential on $M$. The split exact sequence of $\Z_2$-vector spaces
\[
0\to \im Sq^1\to \ker Sq^1\to H(M,Sq^1)\to 0
\]
extends to a split exact sequence of $A(0)^*$-modules
\[
0\to A(0)^*\ot\im Sq^1\xto{i} M\xto{p} H(M,Sq^1)\to 0.
\]
Therefore, $M$ consists of a sum of free $A(0)^*$-summands $A(0)^*\ot\im Sq^1$ and $\Z_2$-summands $H(M,Sq^1)$. We note that an injection of $A(0)^*$-modules splits if it induces also an injection in $Sq^1$-homology.

\begin{proof}[Proof of Proposition \ref{l}]
Let $\overline{H^*(MSO)}=\Z_2\boxempty_{L^*}H^*(MSO)$ be the dual of $\overline{H_*(MSO)}$. We show: The dual of $\overline{T_*}$, the $A(0)^*$-module map
\begin{equation}\label{kkh}\begin{split}
\overline{H^*(MSO)}&\xto{\overline{\mu^*}}\overline{H^*(MSO)}\ot \overline{H^*(MSO)}\\
                   &\xto{\id\ot t^*(Mv)^*}\overline{H^*(MSO)}\ot H^*(\Sigma^4BG_+),
\end{split}\end{equation}
is restricted to the cokernel of the augmentation $\Z_2\to\overline{H^*(MSO)}$ split injective. In the coalgebra $(\overline{H^*(MSO)},\overline{\mu^*})$ we consider the primitive elements $P\overline{H^*(MSO)}$:
\[
x\in P\overline{H^*(MSO)} \Leftrightarrow x\in\overline{H^*(MSO)},\ \overline{\mu^*}(x)=1\ot x+x\ot 1.
\]
Then the split injectivity of \ref{kkh} follows from the injectivity of
\[
P\overline{H^*(MSO)}\xto{t^*(Mv)^*}H^*(\Sigma^4BG_+)
\]
and
\[
PH^*(\overline{H^*(MSO)},Sq^1)\xto{t^*(Mv)^*}H^*(H^*(\Sigma^4BG_+),Sq^1).
\]

We deduce from Theorem \ref{mso} that $P\overline{H^*(MSO)}=0$ for $n<4$ or $n=2^i-1$. Hence we show
\begin{prop}\label{prim} The map
\[
PH^n(MSO) \xto{(Mv)^*} H^n(M(-\tau)) \xto{t^*} H^{n-4}(BG)
\]
is injective for $n\geq 4$ and $n\neq 2^i-1$.
\end{prop}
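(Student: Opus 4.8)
The plan is to translate the statement, via Thom isomorphisms and Proposition~\ref{gint}, into an explicit computation with the Stiefel--Whitney class of $\tau$ and the integration along the fiber, both of which Theorem~\ref{a} has made explicit. First I would identify the map: by Lemma~\ref{tgegeben} the composite $\Sigma^4BG_+\xto{t}M(-\tau)\xto{Mv}MSO$ realizes the map of the proposition on $\Z_2$-cohomology. Now $Mv$ is the mod $2$ reduction of the $MSO$-Thom class of $-\tau$, hence is induced by a bundle map from $-\tau$ to the universal oriented bundle covering the classifying map $c\colon BH\to BSO$ of the stable bundle $-\tau$; by naturality of the Thom isomorphism, $(Mv)^*$ corresponds, under the Thom isomorphisms $H^*(MSO)\cong H^*(BSO)$ and $H^*(M(-\tau))\cong H^*(BH)$, to $c^*$, which sends the universal Stiefel--Whitney classes to the dual classes $\bar w_i(\tau)$ (read off from Theorem~\ref{a}(5)). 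By the transfer~\ref{tink} together with Proposition~\ref{gint}, $t^*$ corresponds under the Thom isomorphism of $-\tau$ to the integration along the fiber $\pi_!$ of Theorem~\ref{a}(6). Identifying $H^*(MSO)\cong H^*(BSO)$ (an isomorphism of coalgebras for the Pontrjagin coproduct), the map becomes $PH^n(BSO)\xto{c^*}H^n(BH)\xto{\pi_!}H^{n-4}(BG)$.

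\emph{Reduction to one class.} By Theorem~\ref{mso}, $H_*(MSO;\Z_2)$ is a polynomial algebra with a single generator in each degree $\geq 2$; since the Thom isomorphism carries the Pontrjagin product of $H_*(MSO)$ to that of $H_*(BSO)$, the same holds for $H_*(BSO;\Z_2)$, so $PH^n(BSO;\Z_2)$ is one-dimensional for $n\geq 2$. For $n$ not a power of $2$ its generator is the Newton polynomial $s_n$ in the Stiefel--Whitney classes ($s_n$ is primitive, and $s_n=s_{n_0}^{2^k}$ with $n=2^kn_0$, $n_0\geq 3$ odd, is nonzero in $\Z_2[w_2,w_3,\dots]$ because it involves $w_{n_0}$); for $n=2^a$ the Newton polynomial vanishes in $BSO$ and the generator is $w_2^{2^{a-1}}$. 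As $s_n$ is additive, $c^*(s_n)=s_n(-\tau)=s_n(\tau)$ over $\Z_2$, while $c^*(w_2^{2^{a-1}})=\bar w_2(\tau)^{2^{a-1}}=(x_1^2+x_2)^{2^{a-1}}$. Hence it suffices to prove $\pi_!(s_n(\tau))\neq 0$ for $n\geq 4$ not a power of $2$, and $\pi_!\bigl((x_1^2+x_2)^{2^{a-1}}\bigr)\neq 0$ for $a\geq 2$.

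\emph{Computation.} The power-of-$2$ case is immediate: $(x_1^2+x_2)^{2^{a-1}}=x_1^{2^a}+x_2^{2^{a-1}}$, and Theorem~\ref{a}(6) gives $\pi_!(x_1^{2^a})=0$ and $\pi_!(x_2^{2^{a-1}})=y_4^{2^{a-2}-1}\neq 0$. For $n$ not a power of $2$ I would use $w_1(\tau)=0$: Newton's identities give $\sum_{n\geq 3}s_n(\tau)z^n=w_3(\tau)z^3\cdot W(z)^{-1}$ with $W(z)=1+w_2(\tau)z^2+w_3(\tau)z^3+w_4(\tau)z^4$, so $s_n(\tau)=x_1x_2\,q_n$ with $q_n:=[z^{n-3}]\,W(z)^{-1}\in H^{n-3}(BH)$. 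Writing $q_n=a_n+b_nx_2+c_nx_2^2$ in the $\pi^*H^*(BG)$-basis $\{1,x_2,x_2^2\}$ of $H^*(BH)$ and using $x_2^3=\pi^*(y_6)+\pi^*(y_4)x_2$ together with $\pi_!(1)=\pi_!(x_2)=0$, $\pi_!(x_2^2)=1$, one gets $\pi_!(s_n(\tau))=\pi_!(x_1x_2q_n)=y_1\,b_n$. Thus the proposition reduces to: $b_n\neq 0$ whenever $n\geq 4$ is neither a power of $2$ nor of the form $2^j-1$.

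\emph{The main obstacle.} This last non-vanishing is where the work lies. Cramer's rule applied to $W(z)^{-1}=(P_0+P_1x_2+P_2x_2^2)^{-1}$ over $\Z_2[y_1,y_4,y_6][[z]]$ (reducing modulo $x_2^3=y_6+y_4x_2$, where $P_0=1+y_1^2z^2+y_4z^4$, $P_1=z^2(1+y_1z)$, $P_2=z^4$) gives $\sum_nb_nz^{n-3}=P_0P_1/\det M$, with $\det M$ the norm of $W(z)$ in the cubic extension attached to $T^3+y_4T+y_6$. I would prove $b_n\neq 0$ in the required degrees by induction on the $2$-adic valuation of $n$, using $s_{2m}(\tau)=s_m(\tau)^2$ to pass from $m$ to $2m$ and, in the base case ($n$ odd), inspecting which monomials of $s_n(\tau)$ survive the formula of Theorem~\ref{a}(6). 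The excluded values $n=2^j-1$ are exactly those for which this fails (there $b_n=0$), consistently with $P\overline{H^n(MSO)}=0$ in those degrees, so that nothing is claimed. Every step apart from this combinatorial one is formal given Theorems~\ref{a} and~\ref{mso}; controlling the $b_n$ uniformly in $n$ is the main difficulty.
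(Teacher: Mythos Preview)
Your reduction via Thom isomorphisms to $PH^n(BSO)\xto{c^*}H^n(BH)\xto{\pi_!}H^{n-4}(BG)$, your identification of the primitive as $w_2^{2^{a-1}}$ for $n=2^a$ and as $s_n$ otherwise, and your treatment of the $n=2^a$ case all match the paper exactly. The generating-function repackaging $s_n(\tau)=x_1x_2\,q_n$ with $q_n=[z^{n-3}]W(z)^{-1}$ and the conclusion $\pi_!(s_n(\tau))=y_1b_n$ are correct and give a clean reformulation. For $n$ even and not a power of $2$ your induction via $s_{2m}=s_m^2$ does go through: writing $q_m=a_m+b_mx_2+c_mx_2^2$ one finds $b_{2m}=y_1(a_m^2+y_4b_m^2+y_4^2c_m^2)$, which vanishes in the UFD $\Z_2[y_1,y_4,y_6]$ only if $a_m=b_m=c_m=0$, i.e.\ only if $s_m(\tau)=0$, i.e.\ only if $m$ is a power of $2$.

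The gap is the odd base case. ``Inspecting which monomials of $s_n(\tau)$ survive'' is precisely where the content lies, and you have not supplied an argument. The paper handles odd $n$ quite differently from your Cramer's-rule setup: it passes to the quotient modulo $y_6$ (so works in $\Z_2[y_1,y_4]$), introduces explicit descending filtrations on $\Z_2[\omega_2,\omega_3,\omega_4]$, $\Z_2[x_1,x_2,x_4]$, $\Z_2[y_1,y_4]$ indexed essentially by the exponent of $\omega_3$, $x_1$, $y_1$ respectively, and then uses Girard's formula to locate in $s_n$ a monomial $\omega_2^a\omega_3^b$ of smallest filtration with $a,b$ both odd and $\frac{n(a+b-1)!}{a!\,b!}$ odd. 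The existence of such $(a,b)$ for every odd $n\neq 2^k-1$ is Lemma~\ref{binom}, proved by an explicit $2$-adic construction, and it is this lemma that carries the weight of the proof. Your expression for $\sum_n b_nz^{n-3}$ as a ratio over the norm of $W(z)$ is correct, but it does not make the nonvanishing of the odd-indexed $b_n$ visible; extracting it would amount to reproving Lemma~\ref{binom} in different coordinates.
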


Since $H^n(H^*(MSO),Sq^1)=0$ for $n\not\equiv 0\modu4$ (cf.\cite{switzer}, p. 513) we show
\begin{prop}\label{sqprim} The map
\[
PH^n(H^*(MSO),Sq^1) \xto{(Mv)^*} H^n(H^*(M(-\tau)),Sq^1) \xto{t^*} H^{n-4}(H^*(BG),Sq^1)
\]
is injective for $n\equiv0\modu4$.
\end{prop}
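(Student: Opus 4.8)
The plan is to make the homotopy-theoretic map $t^*(Mv)^*$ geometrically explicit, pass to $Sq^1$-homology, and evaluate on primitives using Theorem \ref{a}. First I would rewrite the composite $H^n(MSO)\xto{(Mv)^*}H^n(M(-\tau))\xto{t^*}H^{n-4}(BG)$. Using the $MSO$-Thom isomorphism to identify $H^*(MSO)\cong H^*(BSO)$, and Proposition \ref{gint} (which identifies the $H\Z_2$-transfer \ref{tink} with integration along the fiber), this composite becomes $z\mapsto\pi_!(g^*z)$, where $g\colon BH\to BSO$ classifies the stable normal bundle $-\tau$ along the fiber and $\pi_!\colon H^n(BH)\to H^{n-4}(BG)$ is integration along the fiber. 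Each ingredient commutes with $Sq^1$: the Thom isomorphism for $BSO$ does because $Sq^1$ of the Thom class equals $w_1$ of the universal oriented bundle, which vanishes; $g^*$ is natural; and $\pi_!$ commutes with $Sq^1$ because $w_1(\tau)=0$ (the fibration is oriented). Hence the composite descends to $Sq^1$-homology.

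Next I would compute the source. Since the Thom isomorphism $H^*(MSO)\cong H^*(BSO)$ is $Sq^1$-equivariant and a map of coalgebras, it suffices to describe $H(H^*(BSO),Sq^1)$ as a Hopf algebra. The Wu formula $Sq^1w_j=(j-1)w_{j+1}$ (using $w_1=0$) gives $w_{2k+1}=Sq^1(w_{2k})$ and $w_{2k+1}^2=Sq^1(w_{2k}w_{2k+1})$, so $H(H^*(BSO),Sq^1)\cong\Z_2[\bar p_1,\bar p_2,\dots]$, the polynomial algebra on the mod $2$ Pontrjagin classes $\bar p_i=w_{2i}^2$ of degree $4i$; moreover, although the total Pontrjagin class is not grouplike in $H^*(BSO)$, its defect involves only the classes $w_{2k+1}^2$ and hence it becomes grouplike in $Sq^1$-homology. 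Thus $H(H^*(BSO),Sq^1)$ is, as a Hopf algebra, $H^*(BU;\Z_2)$ with degrees doubled; its module of primitives is one-dimensional in each positive degree $4m$, spanned there by the $m$-th Newton class $N_m$, and vanishes in degrees $\not\equiv 0\modu 4$. So for $n=4m$ the proposition reduces to showing $\pi_!(g^*N_m)\neq 0$ in $H^{4m-4}(H^*(BG),Sq^1)$.

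Finally I would carry out the computation. Since Newton classes are additive under Whitney sum and $\tau\oplus(-\tau)$ is stably trivial, $g^*N_m=N_m(\bar p(-\tau))=N_m(\bar p(\tau))$ in $Sq^1$-homology. By Theorem \ref{a}(5), $\bar p_1(\tau)=w_2(\tau)^2=x_1^4+x_2^2$ and $\bar p_2(\tau)=w_4(\tau)^2=x_4^2$, and since $x_1^4=Sq^1(x_1^3)$ and $Sq^1x_2=Sq^1x_4=0$ (the latter because $x_2,x_4$ reduce integral classes, so that $H(H^*(BH),Sq^1)=\Z_2[x_2,x_4]$), one gets $\bar p(\tau)\equiv 1+x_2^2+x_4^2$ and $g^*N_m\equiv p_m$, the $m$-th power-sum polynomial of a two-root system with elementary symmetric functions $x_2^2$ and $x_4^2$. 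An elementary check (e.g. from Newton's recursion over $\Z_2$) shows that $p_m$ is divisible by $x_2^2$, that the monomial $x_2^{2m}$ occurs with coefficient $1$, and that every other monomial has the form $x_2^{2i}x_4^{2j}$ with $i,j\geq 1$. Now Theorem \ref{a}(6) gives $\pi_!(x_2^{2m})\equiv y_4^{m-1}$ and $\pi_!(x_2^{2i}x_4^{2j})\equiv 0$ modulo $y_6$ whenever $i,j\geq 1$, so $\pi_!(g^*N_m)=y_4^{m-1}+(\text{an element of the ideal }(y_6))$ in $H^*(BG)$. Reducing modulo $(y_1,y_6)$ to $\Z_2[y_4]$ kills $\im Sq^1$ (which lies in $(y_1)$) and $(y_6)$ but sends $y_4^{m-1}$ to itself, so $\pi_!(g^*N_m)\notin\im Sq^1$; hence it is a nonzero class in $H(H^*(BG),Sq^1)=\Z_2[y_4,y_6^2]$, which proves injectivity.

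I expect the main obstacle to be the first step: pinning down the homotopy-theoretic map $t^*(Mv)^*$ precisely enough — as $\pi_!$ precomposed with the restriction of characteristic classes of $-\tau$ — and verifying $Sq^1$-equivariance of all the maps involved, which hinges on the orientability of $BSO$ and of the fibration. The identification of $H(H^*(BSO),Sq^1)$ as a Hopf algebra and the push-forward computation are then essentially bookkeeping, granted Theorem \ref{a}.
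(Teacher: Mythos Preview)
Your approach is correct and close in spirit to the paper's, but with a cleaner choice of primitive. The paper represents the unique primitive in $H^{4t}(H^*(BSO),Sq^1)$ by the monomial symmetric function $s_{2t,2t}(\gw)$; it must first prove a lemma that this class is a $Sq^1$-cocycle and primitive modulo boundaries, then carry out an integral computation (formula \ref{s2t}) to reduce $s_{2t,2t}$ to $\gw_2^{2t}$ modulo $(\gw_2^2\gw_4^2)$, and finally push forward via \ref{gw24}. Your identification of $H(H^*(BSO),Sq^1)$ with $H^*(BU;\Z_2)$ (degrees doubled) and of its primitive with the Newton class $N_m$ in the $\bar p_i$ bypasses both the lemma and the integral step; the two-variable Newton recursion over $\Z_2$ then plays the role of \ref{s2t}, and Theorem~\ref{a}(6) yields $y_4^{m-1}$ directly, exactly as in the paper's conclusion.

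One correction: your claim that $Sq^1x_2=0$ is false. From Theorem~\ref{a}(3),(4) one has $\pi^*(Sq^1y_6)=Sq^1(x_2x_4)$, i.e.\ $x_1x_2x_4=(Sq^1x_2)\,x_4+x_2\,(Sq^1x_4)$; since $Sq^1x_4=Sq^1\pi^*(y_4)-Sq^1(x_2^2)=0$, this forces $Sq^1x_2=x_1x_2$. (Geometrically, complex conjugation sends $c_1$ to $-c_1$, so the degree-$2$ generator does not descend to an integral class on $BH$.) Consequently $H(H^*(BH),Sq^1)=\Z_2[x_2^2,x_4]$, not $\Z_2[x_2,x_4]$. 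This does not damage your argument, however: you only use that $x_2^2$ and $x_4^2$ are $Sq^1$-cocycles (automatic for squares) and that $x_1^4=Sq^1(x_1^3)$, so $\bar p(\tau)\equiv 1+x_2^2+x_4^2$ in $Sq^1$-homology and the rest of the computation goes through unchanged.
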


From these two propositions the split injectivity of \ref{kkh} and hence Proposition \ref{l} follow.
\end{proof}

Before proving Proposition \ref{prim} and \ref{sqprim} we make some remarks. The map $(Mv)^*$ is induced by the classifying map $v\colon BH\to BSO$ of $-\tau$. Now $BSO$ is a homotopy-commutative H-group with inverse $h$. If $\tilde{v}$ is the classifying map of $\tau$ we have $v=h\circ \tilde{v}$ and $Mv$ is given by
\[
M(-\tau)\xto{M\tilde{v}}MSO\xto{Mh}MSO.
\]
From the homotopy-commutativity of $BSO$ it follows that $Mh$ is a map of ring spectra. In addition, $Mh$ induces in cohomology an isomorphism, hence we can replace $Mv$ by $M\tilde{v}$.

Further one observes that the Thom isomorphism $H^*(BSO)\cong H^*(MSO)$ respects the coalgebra structure (cf. \cite{switzer}, Lemma 16.36). Therefore we identify
\[
H^*(M(-\tau))\xto{(M\tilde{v})^*}H^*(MSO)\quad\text{with}\quad H^*(BH)\xto{\tilde{v}^*}H^*(BSO).
\]
By Theorem \ref{a}, (5), we understand $\tilde{v}^*$.
Let us summarize:
\[
\Sigma^4BG_+\xto{t} M(-\tau) \xto{Mv} MSO
\]
in cohomology induces the map
\begin{align*}
\Z_2[\gw_2,\gw_3,\dots]& \xto{\tilde{v}^*} \Z_2[x_1,x_2,x_4] \to \Z_2[y_1,y_4,y_6]\\
                  \gw_2& \mapsto x_1^2+x_2 \\
                  \gw_3& \mapsto x_1x_2    \\
                  \gw_4& \mapsto x_4.
\end{align*}

The latter map is the composition of Thom isomorphism and $t^*$, following Proposition \ref{gint} it is just the integration along the fiber. Modulo $y_6$ we have (cf. Theorem \ref{a}, (6))
\[
\pi_!(x_1^ax_2^bx_4^c)\equiv\begin{cases}y_1^ay_4^{d-1},\ &\text{if}\ b=2d>0,\, c=0\\
                                       y_1^ay_4^{c-1}, \ &\text{if}\ b=0,\, c>0\\
                                       0,             \ &\text{otherwise}.
\end{cases}
\]
The map $\tilde{v}^*$ factors through $H^*(BSO)/(\gw_5,\gw_6,\dots)$ and it is enough to show the injectivity onto
$(H^*(BG))/(y_6)$. Therefore we shall consider the maps
\begin{equation}\label{einfach}
\Z_2[\gw_2,\gw_3,\gw_4]\xto{\tilde{v}^*}\Z_2[x_1,x_2,x_4]\xto{\pi_!}\Z_2[y_1,y_4]
\end{equation}
and use the labels $\tilde{v}^*$ and $\pi_!$ again.

From Theorem \ref{mso} it also follows that $PH^n(BSO)$ is isomorphic to $\Z_2$ for $n\geq 4$, $n\neq 2^i-1$ and zero otherwise. Likewise from \cite{switzer}, p. 513, it follows that $PH^n(H^*(BSO),Sq^1)$ is isomorphic to $\Z_2$ for $n\equiv 0\modu4$ and zero otherwise. Hence we have to find in $H^*(BSO)$ and $H^*(H^*(BSO),Sq^1)$ one primitive element in the respective degrees and show that it is not sent to zero under $\pi_!\circ\tilde{v}^*$. To proceed in this way we shall express the primitive elements by Stiefel-Whitney classes.


\begin{proof}[Proof of Proposition \ref{prim}] The coproduct of $H^*(BSO)$ is induced by the Whitney sum of the universal bundles. For $n=2^{i+1}$ the element $\gw_2^{2^i}$ is primitive because $\gw_2$ is primitive and
\begin{equation}\label{hochi}\begin{split}
\psi(\gw_2^{2^i})&=\psi(\gw_2)^{2^i}\\
                 &=(1\ot\gw_2+\gw_2\ot1)^{2^i}\\
                 &=(1\ot\gw_2^{2^i}+\gw_2^{2^i}\ot1)
\end{split}\end{equation}
(in the binomial formula modulo $2$ all other summands vanish).

To describe the other primitive elements one has to introduce certain polynomial $s_I$, $I=(i_1,\dots,i_r)$ some partition of $n$, in the elementary symmetric functions $\sigma_1,\sigma_2,\ldots$. (cf. for a discussion \cite{milsta}, \textsection 16). If we replace $\sigma_i$ by the Stiefel-Whitney classes of a vector bundle $\xi$ we obtain an element $s_I(\gw):=s_I(\gw_1(\xi),\dots,\gw_n(\xi))$. One can show that the coproduct $H^*(BO)\to H^*(BO)\ot H^*(BO)$ is given on $s_I(\gw)$ mod $2$ by (cf. \cite{milsta}, Lemma 16.2)
\begin{equation}\label{primu}
s_I(\gw)\mapsto \sum_{JK=I}s_J(\gw)\ot s_K(\gw),
\end{equation}
to be summed over all partitions $J$ and $K$ with juxtaposition $I$. This shows that $s_n(\gw)\in H^n(BO)$ is a primitive element. (In fact, $s_n$ is just defined to become this primitive element.) In $s_n(\sigma_1,\sigma_2,\ldots)$ always the summand $n\cdot\sigma_n$ occurs which means that for $n>1$ odd $s_n(\gw)\neq 0$ in $H^*(BSO)$. For $n\neq2^{i+1}$ even we can write $n=2^{j}l$ for some $l>1$ odd. Then again $s_{2^{j}l}(\gw)\equiv s_l(\gw)^{2^{j}}\modu2$ is the primitive element  $\neq0$.


We show that no primitive element is sent to zero.
This is easy for $\gw_2^{2^i}$:
\begin{align*}
\gw_2^{2^i}&\xmapsto{\tilde{v}^*}(x_1^2+x_2)^{2^i}\\
           &=\sum_{k=0}^{2^i}\binom{2^i}{k}x_1^{2k}x_2^{2^i-k}\\
           &=x_2^{2^i}+x_1^{2^{i+1}}\\
           &\xmapsto{\pi_!} y_4^{2^{i-1}-1}.
\end{align*}

For $n\neq2^{i+1}$ the proof is more extensive. We put
\[
B:=\Z_2[\gw_2,\gw_3,\gw_4] \xto{\tilde{v}^*} C:=\Z_2[x_1,x_2,x_4] \xto{\pi_!} D:=\Z_2[y_1,y_4,y_6]
\]
and consider descending filtrations
\[
F_kC:=\text{span}(x_1^ax_2^bx_4^c)+\text{ideal}(x_2x_4)\quad \text{and}\quad F_kD:=\text{span}(y_1^ay_4^b),\quad a\geq k.
\]
Then $\pi_!$ respects the filtration. We have
\begin{align*}
\tilde{v}^*(\gw_2^a\gw_3^b\gw_4^c)&=(x_1^2+x_2)^ax_1^bx_2^bx_4^c\\
                                  &=\begin{cases}x_1^bx_2^{a+b}+\ \text{elements of}\ F_{b+1}C &\ \text{if}\ c=0\\
                                                 x_1^{2a}x_4^c+\ \text{term}\cdot x_2x_4        &\ \text{if}\ b=0\\
                                                 \text{term}\cdot x_2x_4                        &\ \text{if}\ b,c\neq0.\end{cases}
\end{align*}
This suggests the following filtration for $B$:
\[
F_kB:=\text{span}(\gw_2^a\gw_3^b) + \text{span}(\gw_2^{a'}\gw_4^c)+ \text{ideal}(\gw_3\gw_4),\quad b\geq k,\ 2a'\geq k.
\]
Then also $\tilde{v}^*$ respects the filtration. We observe that
\begin{equation}\label{beo}
\pi_!(\tilde{v}^*(\gw_2^a\gw_3^b))\equiv y_1^by_4^{e-1}\modu{F_{b+1}D},\quad \text{if}\ a+b=2e.
\end{equation}

First let $n$ be odd and $\geq 4$, $\neq 2^i-1$. The polynomial $s_n(\gw)$ can be computed by means of Girard's formula (cf. \cite{milsta}, p. 195):
\begin{equation}\label{gir}
(-1)^ns_n(\gw)=\sum_{i_1+2i_2+\dots+ni_n=n}(-1)^{i_1+\ldots+i_n}\frac{n\cdot(i_1+\ldots+i_n-1)!}{i_1!\cdot\dots\cdot i_n!}\gw_1^{i_1}\dots \gw_n^{i_n}.
\end{equation}

We only consider summands with $i_m=0$ for $m>4$. Since $n$ is odd there always appears an odd $i_3$. Below in Lemma \ref{binom} it is proved that there exists natural numbers $a$ and $b$ with
\begin{equation}\label{bed}
a,b\ \text{odd},\ 2a+3b=n,\ \frac{n(a+b-1)!}{a!b!}\equiv 1\modu{2}.
\end{equation}
We pick the smallest $i_3=b$ and related $i_2=a$ such that condition \ref{bed} is satisfied. With this pair $a,b$ we have
\begin{equation}
s_n\equiv \sum_j\gw_2^{a_j}\gw_3^{b_j}+\gw_2^a\gw_3^b\modu{F_{b+1}}B,
\end{equation}
where $\sum_j\gw_2^{a_j}\gw_3^{b_j}$ consists of summands with $b_j<b$ and which satisfy \ref{bed} ($a_j$ as $a$ resp. $b_j$ as $b$) except the condition $a$ odd.

Since
\[
\tilde{v}^*(\gw_2^{a_j}\gw_3^{b_j})=\sum_{t=0}^{a_j}\binom{a_j}{t}x_1^{2t+b_j}x_2^{a_j-t+b_j}
\]
and $\binom{a_j}{t}\equiv 0\modu{2}$ for $t$ odd on the one hand and
$\pi_!(x_1^{2t+b_j}x_2^{a_j-t+b_j})=0$ for $a_j-t+b_j$ odd, which means $t$ even, on the other hand, it follows that
\[
\pi_!(\tilde{v}^*\left(\sum_j\gw_2^{a_j}\gw_3^{b_j}\right))=0
\]
and in this way (see \ref{beo})
\[
\pi_!(\tilde{v}^*(s_n))\equiv y_1^by_4^{e-1}\modu{F_{b+1}D},\quad a+b=2e.
\]
This shows the injectivity for $n$ odd.

Because of $s_{2n}\equiv s_n^2\modu2$ we obtain for $n=2n'$, $n'$ odd,
\[
s_{2n'}\equiv \sum_j\gw_2^{2a_j}\gw_3^{2b_j}+\gw_2^{2a}\gw_3^{2b}\modu{F_{2b+1}}B.
\]
Pick the smallest $b_j=:b'$ and related $a'$. Then we have $s_{2n'}\equiv\gw_2^{2a'}\gw_3^{2b'}\modu{F_{2b'+1}}B$ and so
\[
\pi_!(\tilde{v}^*(s_{2n'})\equiv y_1^{2b'}y_4^{a'+b'-1}\modu{F_{2b'+1}D}.
\]
In this manner we see the injection for all even $n$.
\end{proof}

\begin{lem}\label{binom}
Let $n\neq 2^k-1$ be odd. Then there exists natural numbers $a$ and $b$ which satisfy \ref{bed}.
\end{lem}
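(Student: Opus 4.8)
The plan is to reduce Lemma \ref{binom} to a statement about binary expansions. Using $2a+3b=n$ one has the identity
\[
\frac{n(a+b-1)!}{a!\,b!}=\bigl(2(a+b)+b\bigr)\frac{(a+b-1)!}{a!\,b!}=2\binom{a+b}{a}+\binom{a+b-1}{b-1},
\]
so modulo $2$ the congruence in \ref{bed} is equivalent to $\binom{a+b-1}{b-1}$ being odd. Writing $a=2d+1$ and $b=2c+1$ with $c,d\geq 0$ (possible since $a,b$ are odd), the relation $2a+3b=n$ turns into $2d+3c=m$ with $m:=(n-5)/2$; note that an odd $n$ with $n\neq 2^k-1$ automatically satisfies $n\geq 5$, so $m\geq 0$, and that $m=2^j-3$ is equivalent to $n=2^{j+1}-1$, whence in our situation $m\neq 2^j-3$ for every $j\geq 2$. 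Finally, by Lucas's theorem $\binom{a+b-1}{b-1}=\binom{2(d+c)+1}{2c}\equiv\binom{d+c}{c}\modu 2$, and by Kummer's theorem this is odd exactly when $d$ and $c$ have disjoint binary supports. Hence it is enough to prove: if $m\geq 0$ and $m\neq 2^j-3$ for all $j\geq 2$, then there are $c,d\geq 0$ with disjoint binary supports and $2d+3c=m$.

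For this I would make one further substitution. Given disjoint $c,d$, set $u:=c+d$; then $c$ is a \emph{submask} of $u$ (write $c\subseteq u$: the binary digits of $c$ form a subset of those of $u$) and $2d+3c=2u+c$, and conversely any $u$ with a submask $c\subseteq u$ produces a disjoint pair via $d:=u\oplus c$. Call $m$ \emph{solvable} if $2u+c=m$ for some $u\geq 0$ and some submask $c\subseteq u$. The three facts that carry the argument are: (a) every even $m$ is solvable, via $u=m/2$, $c=0$; (b) every $m\equiv 3\modu 4$ is solvable, via $u=(m-1)/2$ (which is odd) and $c=1$; (c) for $m\equiv 1\modu 4$, $m$ is solvable if and only if $m\geq 9$ and $(m-9)/4$ is solvable. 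In (c), the implication ``$\Leftarrow$'' uses $(u,c)=(4u'+3,\,4c'+3)$ assembled from a solution $(u',c')$ of $(m-9)/4$; for ``$\Rightarrow$'', from $2u+c=m\equiv 1\modu 4$ and $c\subseteq u$ one first forces $u$ odd (compare bit $0$) and then, reducing mod $4$, $c\equiv u\equiv 3\modu 4$ (compare bit $1$), so $u=4u'+3$, $c=4c'+3$ with $c'\subseteq u'$ and $2u'+c'=(m-9)/4$.

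Finally I would run a strong induction on $m$ from (a)--(c) to conclude that $m$ is solvable if and only if $m\neq 2^j-3$ for all $j\geq 2$. The even case and the case $m\equiv 3\modu 4$ are immediate because $2^j-3\equiv 1\modu 4$ for $j\geq 2$. For $m\equiv 1\modu 4$ with $m\in\{1,5\}$ one checks directly that $m$ equals $2^2-3$ resp. $2^3-3$ and is not solvable (here $m<9$, so (c) applies); these are precisely the excluded cases $n=7=2^3-1$ and $n=15=2^4-1$. For $m\equiv 1\modu 4$ with $m\geq 9$, fact (c) combined with the elementary equivalence $(m-9)/4=2^i-3\Leftrightarrow m=2^{i+2}-3$ lets the inductive hypothesis close the argument, and translating back through $m=(n-5)/2$ yields the lemma. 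The one genuinely fiddly point I anticipate is the ``$\Rightarrow$'' direction of (c): verifying that the submask condition $c\subseteq u$ descends to $c'\subseteq u'$ under the substitution $u=4u'+3$, $c=4c'+3$, and keeping the two forced congruences (mod $2$ and mod $4$) straight.
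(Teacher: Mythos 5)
Your proof is correct, but it takes a genuinely different route from the paper. The paper constructs $a$ and $b$ explicitly from the binary expansion of $n$: it locates $n$ in an interval $2^{i+1}-2^{i-j+1}<n<2^{i+1}-2^{i-j}$ (this is where the hypothesis $n\neq 2^k-1$ enters, guaranteeing such a $j\leq i-1$ exists), sets $a=3\cdot 2^i-3\cdot 2^{i-j}-n$ and $b=n-(2^{i+1}-2^{i-j+1})$, and then verifies the parity condition directly from $a+b=2^{i-j}(2^j-1)$, $b<2^{i-j}$ and the elementary congruence $\binom{2^pm-1}{x}\equiv 1\modu 2$ for $x<2^p$ (itself a special case of Lucas). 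You instead convert the whole problem, via the identity $\tfrac{n(a+b-1)!}{a!b!}=2\binom{a+b}{a}+\binom{a+b-1}{b-1}$ and Lucas/Kummer, into the carry-free representation problem $2d+3c=m$ with $m=(n-5)/2$, and then settle that problem completely by the mod-$4$ descent (a)--(c). Your argument is longer but proves strictly more: it yields the exact characterization that a pair $(a,b)$ satisfying \ref{bed} exists if and only if $n\neq 2^k-1$, so it also explains why the hypothesis is necessary (compare $n=15$, where $(a,b)=(3,3)$ is the only odd solution of $2a+3b=15$ and the coefficient $50$ is even), whereas the paper's argument is a shorter one-directional construction giving an explicit formula for $(a,b)$. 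All the delicate steps you flagged — the descent of the submask condition under $u=4u'+3$, $c=4c'+3$, and the bookkeeping $(m-9)/4=2^i-3\Leftrightarrow m=2^{i+2}-3$ — check out.
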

\begin{proof}
We chose an $i$ such that $2^i<n<2^{i+1}$ and after that a $1\leq j\leq i-1$ such that $2^{i+1}-2^{i-j+1}<n<2^{i+1}-2^{i-j}$. Here the condition $n\neq 2^k-1$ enters because only in this case a $j\leq i-1$ can be chosen such that $n<2^{i+1}-2^{i-j}$. Now put
\[
a:=3\cdot 2^i - 3\cdot 2^{i-j}-n,\ b:=n-(2^{i+1}-2^{i-j+1}).
\]
A short computation shows $a,b>0$, $a,b$ odd, $2a+3b=n$ and $a+b=2^{i-j}(2^j-1)$.
Next we consider
\[
\frac{n(a+b-1)!}{a!b!}=\frac{n}{a}\binom{a+b-1}{b}
\]
and observe
\begin{align*}
b&=n-(2^{i+1}-2^{i-j+1})\\
 &<2^{i+1}-2^{i-j}-(2^{i+1}-2^{i-j+1})\\
 &=2^{i-j}.
\end{align*}
Now the claim is implied by the following observation. For arbitrary $p,m>0$ we have
\begin{align*}
\binom{2^pm-1}{x}&=\frac{(2^pm-1)}{1}\cdot\frac{(2^pm-2)}{2}\cdot\ldots\cdot\frac{(2^pm-x)}{x}\\
                 &\equiv 1\modu{2}\quad\text{if}\ x<2^p.
\end{align*}
Now put $2^{i-j}(2^j-1)=2^pm$ and $b=x$.
\end{proof}

Finally we treat the $Sq^1$-homology. We note that the Thom isomorphism $\Phi$ for a vector bundle $p\colon\ga\to B$ with Thom class $u\in H^n(\Th(\ga))$ can be described by
\[
H^i(B)\ni x\mapsto (u\cup p^*x)\in \tilde{H}^{i+n}(\Th(\ga)).
\]
If $\ga$ is oriented in the usual sense $\Phi$ commutes with $Sq^1$ because $\Phi^{-1}(Sq^1(u))=w_1(\ga)=0$.

It follows that the identifications $H^*(MSO)\cong H^*(BSO)$ and $H^*(M(-\tau))\cong H^*(BH)$ induce isomorphisms in $Sq^1$-homology.

First we show
\begin{lem}\label{sq1bg} $H^*(H^*(BG),Sq^1)\cong \Z_2[y_4,y_6^2]$.\end{lem}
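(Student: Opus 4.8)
The plan is to compute the $Sq^1$-homology of the polynomial algebra $\Z_2[y_1,y_4,y_6]$ by hand, using that $Sq^1$ is a derivation and, by Theorem~\ref{a}, (4), is determined on generators by $Sq^1 y_1=y_1^2$, $Sq^1 y_4=0$, $Sq^1 y_6=y_1y_6$.

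The first step is to split off the variable $y_4$. Because $Sq^1 y_4=0$ and $Sq^1$ is a derivation, the differential is $\Z_2[y_4]$-linear and restricts to $Sq^1$ on the subalgebra $\Z_2[y_1,y_6]$; that is, under the algebra identification $\Z_2[y_1,y_4,y_6]=\Z_2[y_4]\ot\Z_2[y_1,y_6]$ the differential is $1\ot Sq^1$. Since $\Z_2[y_4]$ is a free (hence flat) $\Z_2$-module, taking homology commutes with this tensor factor, so
\[
H^*(\Z_2[y_1,y_4,y_6],Sq^1)\cong\Z_2[y_4]\ot H^*(\Z_2[y_1,y_6],Sq^1).
\]
It then remains to compute $H^*(\Z_2[y_1,y_6],Sq^1)$.

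For the second step I evaluate $Sq^1$ on the monomial basis. From $Sq^1 y_1^i=i\,y_1^{i+1}$ and $Sq^1 y_6^j=j\,y_1y_6^j$ (modulo $2$), the derivation property gives $Sq^1(y_1^iy_6^j)=(i+j)\,y_1^{i+1}y_6^j$. Hence the cycles are exactly the $\Z_2$-span of the monomials $y_1^iy_6^j$ with $i+j$ even, and the boundaries are the span of those same monomials with the extra condition $i\geq 1$. Both subspaces are spanned by monomials, so the quotient has as a basis the monomials $y_1^iy_6^j$ with $i+j$ even and $i=0$, namely $\{(y_6^2)^m:m\geq 0\}$. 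As $Sq^1$ is a derivation, the boundaries form an ideal in the ring of cycles, so the homology is a ring, and the cycle $y_6^2$ generates a polynomial subring mapping bijectively onto it; thus $H^*(\Z_2[y_1,y_6],Sq^1)\cong\Z_2[y_6^2]$. Substituting into the display above yields $H^*(H^*(BG),Sq^1)\cong\Z_2[y_4,y_6^2]$.

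I do not anticipate a genuine obstacle: once the derivation property and the formulas of Theorem~\ref{a}, (4), are in hand, the computation is elementary bookkeeping. The only points deserving a moment's care are verifying that the displayed set really is a vector-space basis for $H^*(\Z_2[y_1,y_6],Sq^1)$, and that the ring structure on the homology is genuinely the polynomial ring $\Z_2[y_6^2]$ rather than merely an abstract isomorphism of graded vector spaces; both follow from the fact that cycles and boundaries are monomial subspaces and that the boundaries are an ideal in the cycles.
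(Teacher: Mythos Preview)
Your proof is correct and follows essentially the same approach as the paper: both compute $Sq^1$ on monomials via the derivation property and the formulas of Theorem~\ref{a},~(4), identify the cycles as those monomials with $i+j$ even and the boundaries as those with in addition $i\geq 1$, and read off the quotient. The only difference is organizational---you first factor out the $Sq^1$-trivial variable $y_4$ and then work in $\Z_2[y_1,y_6]$, whereas the paper carries $y_4^s$ along throughout; this makes your write-up slightly cleaner but is not a different method.
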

\begin{proof}
Following Theorem \ref{a} we have $H^*(BG)\cong\Z_2[y_1,y_4,y_6]$ and
\[
Sq^1(y_1)=y_1^2,\ Sq^1(y_4)=0,\ Sq^1(y_6)=y_1y_6.
\]
It follows that
\[
Sq^1(y_1^r)=\begin{cases}0,\\
y_1^{r+1},\end{cases}
Sq^1(y_4^r)=\begin{cases}0,\\
0,\end{cases}
Sq^1(y_6^r)=\begin{cases}0,&\text{\ if $r$ even}\\
            y_1y_6^r,&\text{\ if $r$ odd}.\end{cases}
\]
Hence $Sq^1(y_1^ry_4^sy_6^t)=0$ iff $r$ and $t$ are both even or both odd.
For $r,t$ both odd on the one hand and for $r>0,t$ both even on the other hand we likewise have
\[
Sq^1(y_1^{r-1}y_4^sy_6^t)=y_1^ry_4^sy_6^t,
\]
which means $y_1^ry_4^sy_6^t\equiv 0$ in $Sq^1$-homology. In addition one sees that $y_1^ry_4^sy_6^t\not\equiv 0$ for $r=0$, $t$ even and $s$ arbitrary.
\end{proof}

\begin{proof}[Proof of Proposition \ref{sqprim}]
Following \cite{switzer}, p. 513, we have
\begin{equation}\label{sq1bso}
H^*(H^*(BSO),Sq^1)\cong Z_2[w_2^2,w_4^2,\dots].
\end{equation}
Then \ref{einfach} can be written in $Sq^1$-homology as
\begin{equation}\label{abbsq1}
\Z_2[\gw_2^2,\gw_4^2] \xto{\tilde{v}^*} H^*(\Z_2[x_1,x_2,x_4],Sq^1) \xto{\pi_!} \Z_2[y_4].
\end{equation}

Let us determine the primitive elements in $H^*(H^*(BSO),Sq^1)$.
\begin{lem}
$s_{2t,2t}$ is the primitive element $\neq0$ in $H^{4t}(H^*(BSO),Sq^1)$.
\end{lem}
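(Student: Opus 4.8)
The plan is to show that $s_{2t,2t}(\gw)$ is a $Sq^1$-cycle in $H^*(BSO)$ whose class in $H^{4t}(H^*(BSO),Sq^1)$ is primitive and nonzero; since $PH^{4t}(H^*(BSO),Sq^1)\cong\Z_2$ has already been recorded, this class is then forced to be the asserted generator.

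First I would establish the symmetric-function identity
\[
s_{2t,2t}(\gw)\equiv s_{t,t}(\gw_1^2,\gw_2^2,\gw_3^2,\dots)\modu{2}
\]
in $H^*(BO)$, hence in $H^*(BSO)$. By the splitting principle, writing $\gw_i=e_i(\tau_1,\tau_2,\dots)$ in Stiefel--Whitney roots, the left-hand side is $\sum_{j<k}\tau_j^{2t}\tau_k^{2t}=m_{(t,t)}(\tau_1^2,\tau_2^2,\dots)$, while $e_i(\tau_1^2,\tau_2^2,\dots)=e_i(\tau_1,\tau_2,\dots)^2=\gw_i^2$ by the Frobenius $\modu{2}$. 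In particular $s_{2t,2t}(\gw)$ is a polynomial in the squares $\gw_i^2$, which are $Sq^1$-cycles, so it is itself a $Sq^1$-cycle and defines a class in $H^{4t}(H^*(BSO),Sq^1)$.

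Next I pass to $Sq^1$-homology. Wu's formula $Sq^1\gw_j=\gw_1\gw_j+(j-1)\gw_{j+1}$ gives $Sq^1\gw_{2k}=\gw_{2k+1}$ in $H^*(BSO)$; hence $\gw_i=Sq^1\gw_{i-1}$ for odd $i\geq3$ (and $\gw_1=0$), so that $\gw_i^2=Sq^1(\gw_{i-1}\gw_i)$ is $Sq^1$-exact for every odd $i$. Putting $q_k:=[\gw_{2k}^2]$, the isomorphism \ref{sq1bso} becomes $H^*(H^*(BSO),Sq^1)\cong\Z_2[q_1,q_2,\dots]$, and reducing the Whitney formula $\psi(\gw_{2k}^2)=\sum_{a+b=2k}\gw_a^2\ot\gw_b^2$ (the odd-index terms vanishing) shows $\psi(q_k)=\sum_{c+d=k}q_c\ot q_d$, so the total class $1+q_1+q_2+\cdots$ is grouplike. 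Applying $[\,\cdot\,]$ to the identity above and using $[\gw_i^2]=0$ for odd $i$, the class $[s_{2t,2t}(\gw)]$ equals $s_{t,t}$ evaluated at the slots $(0,q_1,0,q_2,0,q_3,\dots)$. A short symmetric-function computation identifies this with $s_t(q_1,q_2,\dots)$: the generating function $\sum_kq_kz^{2k}$ has virtual roots the square roots $g_l^{1/2}$ of the roots of $\sum_kq_kz^k=\prod_l(1+g_lz)$, each with multiplicity two, and the $(t,t)$-monomial symmetric of such a doubled root-set is $\sum_lg_l^t\modu{2}$ (the cross pairs occurring with coefficient $4$), i.e.\ the $t$-th power sum $s_t(q_1,q_2,\dots)$ in the $q_k=e_k(g_\bullet)$.

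Finally, $s_t(q_1,q_2,\dots)$ is primitive in $\Z_2[q_1,q_2,\dots]$ by \ref{primu} (applied to the grouplike total class), and by Girard's formula \ref{gir} it contains the monomial $q_1^t$ with coefficient $1$, hence is nonzero. As $PH^{4t}(H^*(BSO),Sq^1)\cong\Z_2$, the class $[s_{2t,2t}(\gw)]$ is its nonzero generator, which proves the lemma. The only delicate steps are the two symmetric-function identities of the middle paragraphs; both are routine once phrased via the splitting principle, the one requiring most care being the ``doubled roots'' evaluation of $s_{t,t}$.
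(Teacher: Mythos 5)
Your argument is correct, but after the common first step it takes a genuinely different route from the paper's. Both proofs begin with $s_{2t,2t}\equiv s_{t,t}^2=s_{t,t}(\gw_1^2,\gw_2^2,\dots)\modu 2$ to see that the class is a $Sq^1$-cocycle. For primitivity the paper stays upstairs in $H^*(BSO)$: it writes $\psi(s_{2t,2t})=1\ot s_{2t,2t}+s_{2t}\ot s_{2t}+s_{2t,2t}\ot 1$ and kills the middle term by exhibiting it as the explicit coboundary $Sq^1(s_{2t-1}\ot s_{2t})$, the key input being $Sq^1(s_{2t-1})=s_{2t}$, proved by restricting primitives to $BO(1)$. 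You instead descend to the $Sq^1$-homology ring $\Z_2[q_1,q_2,\dots]$ of \ref{sq1bso}, with $q_k=[\gw_{2k}^2]$, identify the induced coproduct $\psi(q_k)=\sum_{c+d=k}q_c\ot q_d$, and compute the class itself as the power sum $s_t(q_1,q_2,\dots)$, from which primitivity (via \ref{primu}) and nonvanishing (leading term $q_1^t$ by \ref{gir}) follow at once. What your version buys is that the nonvanishing comes for free: the paper's proof of the lemma establishes only the cocycle condition and primitivity, and obtains $[s_{2t,2t}]\neq0$ only afterwards from the rational computation \ref{s2t}; your identification $[s_{2t,2t}]=s_t(q)\equiv q_1^t=\gw_2^{2t}$ modulo the higher $q_k$ recovers exactly the output of \ref{s2t}. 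The price is the ``doubled roots'' evaluation of $s_{t,t}$ at $(0,q_1,0,q_2,\dots)$, which you rightly flag as the delicate point: as phrased, the cross pairs carry half-integral exponents, so to make it airtight introduce an auxiliary alphabet $y_l$ with $e_k(y_\bullet^2)=q_k$, check that $e_i$ of the doubled alphabet is $\equiv 0$ resp.\ $q_{i/2}$ for $i$ odd resp.\ even, and compute $m_{(t,t)}(y_1,y_1,y_2,y_2,\dots)=\sum_l y_l^{2t}+4\sum_{l<l'}y_l^ty_{l'}^t\equiv p_t(y_\bullet^2)\modu 2$ before identifying $y_l^2$ with the virtual roots of $\sum q_kz^k$. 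With that filled in, the proof is complete.
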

\begin{proof}
Because of $s_{2t,2t}\equiv s_{t,t}^2 \modu{2}$ it follows that $s_{2t,2t}$ is a $Sq^1$-cocycle. We have
\[
\psi\colon s_{2t,2t} \mapsto 1\ot s_{2t,2t} + s_{2t}\ot s_{2t} + s_{2t,2t}\ot 1.
\]
This means $s_{2t,2t}$ is a primitive element if $s_{2t}\ot s_{2t}$ is a $Sq^1$-coboundary.
We show that $Sq^1(s_{2t-1}\ot s_{2t})=s_{2t}\ot s_{2t}$ which in turn follows from $Sq^1(s_{2t})=0$ and $Sq^1(s_{2t-1})=s_{2t}$. The latter statement: We consider $H^*(BO(1))\cong\Z_2[\gw_1]$. The $(\gw_1^i)^*$ form a basis of the vector space $H_*(BO(1))$. If we denote the image of $(\gw_1^i)^*$ under the inclusion $H_*(BO(1))\hto H_*(BO)$ by $z_i$, we get (cf. \cite{switzer}, p. 384)
\[
H_*(BO)\cong\Z_2[z_1,z_2,\dots]
\]
as algebra. Dualizing this we see that
\[
PH^*(BO)\to H^*(BO)\to H^*(BO(1))
\]
is an isomorphism. Since the primitive element $Sq^1(s_{2n-1})$ is send to $Sq^1(\gw_1^{2t-1})=\gw_1^{2t}$ the claim follows.
\end{proof}

One has the formula (cf. \cite{waerden}, p. 85) $s_{p,p}=\frac{1}{2}(s_{2p}-s_ps_p)$. We obtain (with $\Z$-coefficients)
\begin{equation}\label{s2t}\begin{split}
s_{2t,2t}     &=\frac{1}{2}\left(s_{4t}-(s_{2t})^2\right)\\
              &\equiv\frac{1}{2}(\frac{4t(2t-1)!}{(2t)!}\sigma_2^{2t}\pm\frac{4t(t-1)!}{t!}\sigma_4^{t}\\
              &\quad   -(\pm\frac{2t(t-1)!}{t!}\sigma_2^{t}\pm{\it \frac{2t(t/2-1)!}{(t/2)!}\sigma_4^{t/2}})^2)
                                                \modu(\sigma_2\sigma_4,\sigma_1,\sigma_3,\sigma_5,\sigma_6,\dots)\\
              &\equiv\frac{1}{2}\left(2\sigma_2^{2t}\pm4\sigma_4^t-\left(\pm2\sigma_2^t\pm{\it 4\sigma_4^{t/2}}\right)^2\right)\\
              &\equiv\frac{1}{2}\left(-2\sigma_2^{2t}\pm4\sigma_4^t-{\it 16\sigma_4^t}\right)\\
              &\equiv                  -\sigma_2^{2t}\pm2\sigma_4^t-{\it 8\sigma_4^t},
\end{split}\end{equation}
where the italic summands only occur if $t$ is even. It follows that $s_{2t,2t}\equiv\gw_2^{2t}$ in $\Z_2[\gw_2^2,\gw_4^2]/(\gw_2^2\gw_4^2)$. Without considering the $Sq^1$-homology we have
\begin{equation}\label{gw24}\begin{split}
\pi_!(\tilde{v}^*(\gw_2^a\gw_4^c))&=\pi_!((x_1^2+x_2)^ax_4^c)\\
                                           &=\pi_!(x_2^ax_4^c+\ \text{term}\cdot x_1)\\
                                           &=\begin{cases}y_4^{a/2-1}+\ \text{term}\cdot y_1,\ &a\geq 2\ \text{even},\ c=0\\
                                                          y_4^{c-1}+\ \text{term}\cdot y_1,\ &a=0,\ c>0\\
                                                          \text{term}\cdot y_1,\ &\text{otherwise}.\end{cases}
\end{split}\end{equation}
Since term$\,\cdot\, y_1=0$ in $Sq^1$-homology it follows that $\pi_!\circ\tilde{v}\colon\Z_2[\gw_2^2,\gw_4^2]\to\Z_2[y_4]$ factors over $(\gw_2^2\gw_4^2)$ and finally that $\pi_!(\tilde{v}^*(s_{2t,2t}))=y_4^{t-1}$.
\end{proof}

\section{The Adams spectral sequence}\label{adams}

We show Theorem \ref{adam}: The Adams spectral sequence
\[
Ext_{A_*}^{s,t}(\Z_2,H_*(MSO\wedge\Sigma^4BG_+))\Longrightarrow\pi_{t-s}(MSO\wedge\Sigma^4BG_+)/T_{\nmid 2}
\]
collapses on the $E_2$-term.

By means of \ref{alg} we write
\begin{equation}\label{fmo}\begin{split}
H_*(MSO)\otimes H_*(\Sigma^4BG_+) &\cong A_*\boxempty_{A(0)_*}(\Z_2\ot_{L_*}(H_*(MSO)\otimes H_*(\Sigma^4BG_+))\\
                                  &\cong A_*\boxempty_{A(0)_*}((\Z_2\ot_{L_*} H_*(MSO))\otimes H_*(\Sigma^4BG_+))\\
                                  &\cong A_*\boxempty_{A(0)_*}(\overline{H_*(MSO)}\ot H_*(\Sigma^4BG_+)).\\
\end{split}\end{equation}
Since the structure of $H_*(MSO)\otimes H_*(\Sigma^4BG_+)$ as $L_*$-module is given by multiplication onto the first factor we can change the brackets as indicated.

If one wants to decide whether an Adams spectral sequence collapses or not one can ignore free  $A_*$-summands \cite{margolis}. Note that free $A_*$-summands of $H_*(MSO)\ot H_*(\Sigma^4BG_+)$ correspond to free $A(0)_*$-summands of $\overline{H_*(MSO)}\ot H_*(\Sigma^4BG_+)$.

It is convenient to switch briefly to the cohomological perspective. Above we saw that the non-free part of a $A(0)^*$-module is given by its $Sq^1$-homology. Hence we consider
\begin{align*}
        &H^*(\overline{H^*(MSO)}\ot H^*(\Sigma^4BG_+),Sq^1)\\
\cong{} &H^*(\overline{H^*(MSO)},Sq^1)\ot H^*(H^*(\Sigma^4BG_+),Sq^1).
\end{align*}
From \cite{switzer} p. 513, we take that the $Sq^1$-homology of $H^*(MSO)$ is concentrated in the 0 mod 4-degrees and that $H^*(A^*/A^*Sq^1,Sq^1)\cong\Z_2$. Since $H^*(MSO)\cong A^*/A^*Sq^1\ot\overline{H^*(MSO)}$ (cf. Theorem \ref{mso}, dual version (3)) it follows that $H^*(\overline{H^*(MSO)},Sq^1)$ is concentrated on the 0 mod 4-degrees.

Following Lemma \ref{sq1bg} $H^*(H^*(\Sigma^4BG_+),Sq^1)$ is also concentrated in the 0 mod 4-degrees.

\begin{figure}[h]
\lpad=-5.5cm \tpad=4.5cm
$\Graph{0.6cm,1}{0.6cm,1}
\To (0,0) (10,0) \hd{\scriptstyle t-s}
\To (0,0) (0,8) \hd{\scriptstyle s}
\To \dt{-4pt}\dh{-4pt}^{d^2} (4,2) (3,4)
\To \dt{-4pt}\dh{-4pt}^{d^3} (4,2) (3,5)
\To \dt{-4pt}\dh{-4pt}^{d^4} (4,2) (3,6)
\Dot (0,0) \Dot (0,1) \Dot (0,2) \Dot (0,3) \Dot (0,4) \Dot (0,5) \Dot (0,6) \Dot (0,7) 
\Dot (4,0) \Dot (4,1) \Dot (4,2) \Dot (4,3) \Dot (4,4) \Dot (4,5) \Dot (4,6) \Dot (4,7) 
\Dot (8,0) \Dot (8,1) \Dot (8,2) \Dot (8,3) \Dot (8,4) \Dot (8,5) \Dot (8,6) \Dot (8,7) 
\Math{\scriptstyle 0} (0,-0.5) \Math{\scriptstyle 4} (4,-0.5) \Math{\scriptstyle 8} (8,-0.5)
\endGraph$
\\
\caption{$Ext_{A(0)_*}^{s,t}(\Z_2,V_*)$}\label{bass}
\end{figure}

\begin{proof}[Proof of Theorem \ref{adam}] Dualizing the above statements we get that after splitting off free $A_*$-summands $H_*(MSO)\ot H_*(\Sigma^4BG_+)$ is isomorphic to $A_*\boxempty_{A(0)_*} V_*$, where $V_*$ is a $A(0)_*$-comodule with elements only in the 0 mod 4-degrees. We apply a \emph{change-of-rings} isomorphism (cf. \cite{switzer}, Proposition 20.16) and obtain
\[
Ext_{A_*}^{s,t}(\Z_2,A_*\boxempty_{A(0)_*} V_*) \cong Ext_{A(0)_*}^{s,t}(\Z_2,V_*).
\]
We compute this term using \cite{switzer}, Proposition 19.8. There is a suitable resolution
\[
0\rightarrow\Z_2\rightarrow A(0)_*\ot\Z_2[q],\quad deg(q)=1,
\]
and one can show that $Ext_{A(0)_*}^{s,t}(\Z_2,\Z_2)\cong\Z_2[q]$, $q\in Ext_{A(0)^*}^{1,1}$ (cf. \cite{switzer}, p. 500-501). Hence
\[
Ext_{A(0)_*}^{s,t}(\Z_2,V_*)\cong\Z_2[q]\ot V_*,
\]
where the $(s,t)$-degree of $v\in V_*$ is $(0,deg(v))$. Since the differentials $d_n$ decrease the $t-s$-degree by $1$ it follows that $d_n=0$ for all $n$.
\end{proof}

To prove Theorem \ref{as}, i.e. the surjectivity of
\[
\widehat{T}_*\colon\pi_*(MSO \wedge\Sigma^4BG_+)\to\pi_*(\widehat{MSO}),
\]
we have to solve an extension problem. For that we need the statement from Sec. \ref{free} that $\widehat{T}_*$ becomes a surjection when it is composed with the projection onto $\Omega_*/\text{Tor}$.
\begin{proof}[Proof of Theorem \ref{as}]
It follows from Theorem \ref{wide} and \ref{adam} that $\widehat{T}$ induces a surjection
\[
E_{\infty}^{s,t}(MSO\wedge\Sigma^4BG_+)\to E_{\infty}^{s,t}(\widehat{MSO})
\]
on the $E_{\infty}$-terms of respective mod $2$ Adams spectral sequences. We consider the filtrations of the homotopy groups (cf. \cite{switzer}, Theorem 19.9):
\[
\xgrid=23mm
\ygrid=8mm
\cellpush=7pt
\Diagram
\pi_n(MSO\wedge\Sigma^4BG_+)/T_{\nmid 2}  &  \rTo^{\widehat{T}_*}&  \pi_n(\widehat{MSO})/T_{\nmid 2} \\
\dEq                                      &                      &  \dEq                             \\
A^{0,n}                                   &  \rTo                &  B^{0,n}                          \\
\bigcup                                   &                      &  \bigcup                          \\
A^{1,n+1}                                 &  \rTo                &  B^{1,n+1}                        \\
\bigcup                                   &                      &  \bigcup                          \\
A^{2,n+2}                                 &  \rTo                &  B^{2,n+2}                        \\
\bigcup                                   &                      &  \bigcup                          \\
\vdots                                    &                      &  \vdots                           \\
\endDiagram
\]
with
\[
A^{s,t}/A^{s+1,t+1}\cong E_{\infty}^{s,t}(MSO\wedge\Sigma^4BG_+)\quad \text{resp.}\quad B^{s,t}/B^{s+1,t+1}\cong E_{\infty}^{s,t}(\widehat{MSO})
\]
and
\[
\bigcap_{s=0}^{\infty}A^{s,n+s}=0\quad \text{resp.}\quad \bigcap_{s=0}^{\infty}B^{s,n+s}=0.
\]
Since $\pi_n(\widehat{MSO})\hto\pi_n(MSO)$ contains no odd torsion (cf. \cite{switzer}, Theorem 20.39) we have
\[
\pi_n(\widehat{MSO})/T_{\nmid 2}=\pi_n(\widehat{MSO}).
\]
We remind the reader that $\pi_n(\widehat{MSO})=\pi_n(MSO)$ for $n\geq 1$. In the proof of Proposition \ref{tfree} we construct $\CP^2$-bundles which are contained in $\im\widehat{T}_*$ and generate $\Omega_*/\text{Tor}$. In other words, the composition $\im\widehat{T}_*\hto\Omega_n\to\Omega_n/\text{Tor}_n$ is surjective. It follows that the composition
\[
\text{Tor}_n\hto\Omega_n\to\Omega_n/\im\widehat{T}_*
\]
is surjective, i.e. $\Omega_n/\im\widehat{T}_*= B^{0,n}/\im\widehat{T}_*$ is finite. Hence for the induced filtration
\[
\ldots\subset C_2\subset C_1\subset C_0 = B^{0,n}/\im\widehat{T}_*
\]
there exists an $s\geq 0$ such that $C_{s'}=0$ for all $s'>s$. Thus $A^{0,s}\to B^{s,0}\to C_s$ is surjective and step by step we conclude that
\[
\Omega_{n-4}(BG_+)/T_{\nmid 2}  \xto{\widehat{T}_*}  \Omega_n \xto{proj} \Omega_n/\im\widehat{T}_*
\]
is surjective. The claim follows.
\end{proof}

\section{The oriented cobordism ring modulo torsion}\label{free}

In this section we show that $\Omega_*/\text{Tor}$ is generated by $\CP^2$-bundles.
The idea of the proof originates from \cite{hpeh}, Sec. 4. Unless otherwise specified (co-)homology groups are now understood with $\Z$-coefficients.

For a vector bundle $\xi\to B$ now let $s_n(\xi)\in H^{4n}(B)$ denote the polynomial which arises if one replaces the elementary symmetric function by the Pontrjagin classes of $\xi$ (cf. \cite{milsta}, \textsection 16).

It is a crucial simplification compared to the torsion part that an explicit criterion is known which manifolds generate $\Omega_*/\text{Tor}$ (cf. \cite{stong}, p. 180).
\begin{thm}\label{erzeuger} $\Omega_*/\text{Tor}\cong\Z[M^4,M^8,\dots]$ and a manifold $M^{4n}$ serves as a generator if and only if
\[
S_n(M^{4n}):=\langle s_n(TM),[M^{4n}]\rangle=\begin{cases}\pm1,\ &\text{if}\ 2n+1\ \text{is not a prime power}\\
                                                          \pm p,\ &\text{if}\ 2n+1\ \text{is a power of some prime $p$.}
                                             \end{cases}
\]
\end{thm}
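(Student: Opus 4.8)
The plan is to build Theorem~\ref{erzeuger} on two classical inputs and then carry out one genuine computation. The first input is Thom's rational calculation \cite{thom}, $\Omega_*\otimes\Q\cong\Q[[\CP^2],[\CP^4],\dots]$, together with the (harder) refinement going back to \cite{wall} that $\Omega_*/\mathrm{Tor}$ is \emph{already} a polynomial ring over $\Z$ with one generator in each degree $4k$; the latter is obtained by embedding $\Omega_*/\mathrm{Tor}$ into the lattice of integral Pontrjagin numbers and checking that it is a polynomial subring with the expected Poincar\'e series. Granting this, the content of the theorem is to pin down, for each $n$, the integer
\[
r_n:=\gcd\{\,S_n(M^{4n})\ :\ M^{4n}\ \text{a closed oriented}\ 4n\text{-manifold}\,\},
\]
which is the positive generator of the subgroup $S_n(\Omega_{4n})\subseteq\Z$, and to prove that $x\in\Omega_{4n}$ is a polynomial generator if and only if $S_n(x)=\pm r_n$. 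For the biconditional I would use that $s_n$ is the Newton polynomial in the Pontrjagin classes and hence \emph{primitive} in the coalgebra $H^*(BSO;\Z)$: this makes $S_n$ additive under disjoint union and zero on products of positive-dimensional manifolds (the two external factors live in cohomological degree $<4n$, so their $s_n$-classes vanish), whence $S_n$ descends to the indecomposable quotient $\Omega_{4n}/(\mathrm{decomposables}+\mathrm{Tor})$, which is free of rank one by the polynomial structure; its image is $r_n\Z$, and a class generates this quotient precisely when its $S_n$-value generates $r_n\Z$.

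It then remains to compute $r_n$, and here the prime $2$ never enters because $2n+1$ is odd, hence a prime power only for an odd prime. For the upper estimate I evaluate $S_n$ on explicit manifolds. Since $p(\CP^{2n})=(1+a^2)^{2n+1}$ with $a\in H^2$ a generator, one gets $S_n(\CP^{2n})=2n+1$. On the Milnor hypersurface $H_{r,s}\subset\CP^r\times\CP^s$ with $r+s=2n+1$ (a smooth complex manifold of complex dimension $2n$), the Pontrjagin roots are the squares of the Chern roots, so the Newton polynomial of degree $n$ in the Pontrjagin roots equals $s_{2n}$ in the Chern roots, giving $S_n(H_{r,s})=-\binom{2n+1}{r}$. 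Therefore $r_n$ divides $\gcd\bigl(2n+1,\binom{2n+1}{1},\dots,\binom{2n+1}{2n}\bigr)$, and the elementary fact that $\gcd\{\binom{m}{i}:0<i<m\}$ equals $p$ when $m=p^k$ and $1$ otherwise --- the same binomial-coefficient bookkeeping used in Lemma~\ref{binom} --- shows $r_n\mid p$ in the prime-power case and $r_n=1$ otherwise. In the non-prime-power case this finishes the computation, and an honest generator exists automatically; in fact the $\CP^2$-bundles produced in Section~\ref{free} may be taken for the $x_{4k}$.

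The remaining step is the lower estimate in the prime-power case: if $2n+1=p^k$ then $p\mid S_n(M)$ for \emph{every} closed oriented $M^{4n}$, so $r_n=p$ rather than $1$. This is a statement about mod-$p$ Pontrjagin numbers; since $p$ is odd we have $\Omega_*\otimes\Z_{(p)}=(\Omega_*/\mathrm{Tor})\otimes\Z_{(p)}$, which is torsion-free and, after inverting $2$, is detected by Pontrjagin numbers and generated by stably almost-complex manifolds, on which $S_n$ agrees with the complex $s_{2n}$-number. One then invokes Milnor's computation in complex cobordism --- via the Brown--Peterson splitting of $MU_{(p)}$, equivalently via the divisibility of the coefficients $\binom{p^k}{i}$ in the relevant formal-group identity --- to conclude that the complex $s_{2n}$-number is divisible by $p$ whenever $2n+1$ is a power of $p$. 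I expect this universal-divisibility step to be the main obstacle: the constructive half and the rational/integral polynomial structure are routine applications of Thom's methods, whereas showing $r_n\neq1$ in the prime-power case is where the real content lies. Combining the two estimates gives $r_n=p$ exactly when $2n+1=p^k$ and $r_n=1$ otherwise, which together with the biconditional above is precisely the assertion of Theorem~\ref{erzeuger}.
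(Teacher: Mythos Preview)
The paper does not prove Theorem~\ref{erzeuger} at all: it is quoted as a known classical result with a reference to \cite{stong}, p.~180, and then used as a black box to verify that the explicit $\CP^2$-bundles of Section~\ref{free} generate $\Omega_*/\mathrm{Tor}$. So there is no ``paper's own proof'' to compare against; what you have written is a reconstruction of the classical argument (Milnor, Wall, Stong) rather than something the paper supplies.

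Your sketch is along the standard lines and is essentially correct: the primitivity of $s_n$ gives the reduction to the indecomposable quotient, the Milnor hypersurfaces (or $\CP^{2n}$) give the upper bound $r_n\mid\gcd_i\binom{2n+1}{i}$, and Milnor's divisibility theorem in $\Omega^U_*$ together with the surjectivity of $\Omega^U_*\to\Omega^{SO}_*/\mathrm{Tor}$ gives the lower bound $p\mid r_n$ when $2n+1=p^k$. Two small remarks. First, your pointer to Lemma~\ref{binom} is off: that lemma is a mod~$2$ statement producing a particular pair $(a,b)$ with an odd multinomial coefficient, not the identity $\gcd\{\binom{m}{i}:0<i<m\}=p$ or $1$; the latter is a separate (and easier) exercise via Kummer's theorem. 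Second, invoking Section~\ref{free} to furnish generators is circular in spirit, since that section \emph{uses} Theorem~\ref{erzeuger}; better to say simply that once $r_n$ is known, any manifold with $S_n=\pm r_n$ (e.g.\ a suitable combination of the Milnor hypersurfaces themselves) serves as a polynomial generator.
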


We shall construct manifolds which satisfy this condition. Fix a $n\geq1$. For $1\leq r\leq n$ we consider the complex 3-dimensional bundle $E_r:=\gc_1\x\gc_2\oplus\C$ over $Q:=\CP^{r-1}\x\CP^{2n-r-1}$, where $\gc_i$ denote the canonical complex line bundles and $\C$ the trivial bundle. The associated projective bundle $PE_r$ is a $\CP^2$-bundle with structure group $U(3)$ and a manifold of real dimension $4n$.

\begin{prop}\label{tfree} $PE_r$, $1\leq r\leq n$, generate $\Omega_{4n}/\text{Tor}_{4n}$. \end{prop}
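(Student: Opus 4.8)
The plan is to verify the generator criterion of Theorem~\ref{erzeuger} for a suitable $\Z$-linear combination of the classes $[PE_r]$. Since $s_n$ is a primitive characteristic class, $S_n$ vanishes on decomposable cobordism classes; the decomposable part of $\Omega_{4n}/\text{Tor}$ is already generated by products of lower-dimensional such bundles (induction on $n$), so it remains to exhibit a polynomial generator $M^{4n}$ among the $\Z$-combinations $\sum_{r=1}^n a_r[PE_r]$. As $S_n\colon\Omega_{4n}\to\Z$ is additive, this amounts to showing
\[
\gcd_{1\le r\le n}S_n(PE_r)=\begin{cases}1,&\text{if }2n+1\text{ is not a prime power,}\\ p,&\text{if }2n+1=p^k.\end{cases}
\]

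First I would compute $S_n(PE_r)=\langle s_n(T(PE_r)),[PE_r]\rangle$, recalling that on a stably almost complex manifold $s_n$ is, up to sign, the $2n$-th power sum of the Chern roots of the stable tangent bundle. Put $Q=\CP^{r-1}\x\CP^{2n-r-1}$ with hyperplane classes $u,v\in H^2(Q)$, so $u^r=v^{2n-r}=0$ and $c(E_r)=(1-u)(1-v)$, and let $\eta:=c_1(S^\vee)\in H^2(PE_r)$ for the tautological subbundle $S\subset\pi^*E_r$; then $(\eta-u)(\eta-v)\eta=0$, and fibrewise integration is $\pi_*(\eta^{2+m})=h_m(u,v):=\sum_{a+b=m}u^av^b$, $\pi_*(1)=\pi_*(\eta)=0$. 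From the Euler sequence and $T\CP^m\oplus\C\cong(m+1)\mathcal{O}(1)$ one gets that the stable tangent bundle of $PE_r$ has Chern roots $u$ (with multiplicity $r$), $v$ (with multiplicity $2n-r$), $\eta-u$, $\eta-v$, $\eta$. Since $u^{2n}=v^{2n}=0$ the first two families contribute nothing, so
\[
S_n(PE_r)=\pm\bigl\langle\pi_*\bigl((\eta-u)^{2n}+(\eta-v)^{2n}+\eta^{2n}\bigr),[Q]\bigr\rangle;
\]
pairing with $[Q]$ means reading off the coefficient of $u^{r-1}v^{2n-r-1}$, which reduces the whole expression to partial alternating binomial sums. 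Via $\sum_{i=0}^m(-1)^i\binom{2n}{i}=(-1)^m\binom{2n-1}{m}$ and Pascal's rule these collapse to
\[
S_n(PE_r)=\pm\Bigl(1+(-1)^{r-1}\binom{2n}{r}\Bigr),
\]
so in particular $S_n(PE_1)=\pm(2n+1)$.

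It then remains to establish the gcd statement for the integers $b_r:=1+(-1)^{r-1}\binom{2n}{r}$, $1\le r\le n$. The key point is that, because $\binom{2n}{r}=\binom{2n}{2n-r}$ with $2n$ even, a congruence $\binom{2n}{r}\equiv(-1)^r$ modulo an integer $m$ that holds for all $1\le r\le n$ automatically holds for every $0\le r\le 2n$, which in turn is equivalent to the polynomial identity $(1+x)^{2n+1}\equiv1+x^{2n+1}\pmod m$. If some prime $q$ divided all $b_r$, then $q\mid b_1=2n+1$ and this identity (with $m=q$) would force $2n+1$ to be a power of $q$; hence, when $2n+1$ is not a prime power, $\gcd_r b_r=1$. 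When $2n+1=p^k$, so that $p$ is odd, any prime dividing all $b_r$ must equal $p$; that $p$ does divide every $b_r$ is Lucas's theorem, since the base-$p$ digits of $p^k-1$ are all $p-1$, whence $\binom{p^k-1}{r}\equiv(-1)^r\pmod p$; and $p^2$ cannot divide all $b_r$, for otherwise $(1+x)^{p^k}\equiv1+x^{p^k}\pmod{p^2}$, whereas the coefficient $\binom{p^k}{p^{k-1}}$ of $x^{p^{k-1}}$ has $p$-adic valuation exactly $1$ (by Kummer's carry criterion: adding $p^{k-1}$ to $(p-1)p^{k-1}$ in base $p$ produces precisely one carry), hence is $\not\equiv0\pmod{p^2}$, while on the right-hand side it is $0$. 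Thus $\gcd_r b_r=p$ exactly, and in both cases Theorem~\ref{erzeuger} yields the desired generator.

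The lengthiest ingredient is the characteristic-number computation of the second paragraph --- projective-bundle cohomology, the Euler sequence, fibrewise integration, and the binomial bookkeeping --- but it is routine, the one genuinely useful simplification being the vanishing $u^{2n}=v^{2n}=0$ that discards the contribution of the base. The real obstacle I anticipate is the number theory, specifically the assertion that the gcd is \emph{exactly} $\mu(n)$ (and not some larger multiple of it) when $2n+1=p^k$: this forces one to control the $b_r$ modulo $p^2$, for which the efficient device is the polynomial congruence $(1+x)^{2n+1}\equiv1+x^{2n+1}$ together with a single Kummer carry-count for $\binom{p^k}{p^{k-1}}$.
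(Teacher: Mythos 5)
Your proposal is correct: it establishes exactly the two facts the paper uses, namely $S_n(PE_r)=\pm\bigl(1+(-1)^{r+1}\binom{2n}{r}\bigr)$ and the gcd statement, and then invokes Milnor's criterion (Theorem \ref{erzeuger}) in the same way. The difference lies in how you prove the two lemmas. For the characteristic number, the paper stays inside its transfer formalism: it writes $PE_r$ as a pullback of the universal bundle $BH\to BG$ with $H=U(2)U(1)\subset G=U(3)$, computes $s_n(\tau)$ of the tangent bundle along the fibre from the isotropy representation $\mathfrak h^\perp$, and evaluates the integration along the fibre $\pi_!\colon H^*(BH)\to H^*(BG)$ with integer coefficients via the antisymmetrization operator on $\Z[x_1,x_2,x_3]$. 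Your route through the tautological subbundle, the relation $(\eta-u)(\eta-v)\eta=0$, Segre classes and the relative Euler sequence is more self-contained and standard (Grothendieck-style), at the cost of not reusing the machinery of Section \ref{fiberbundle}; the cancellation $u^{2n}=v^{2n}=0$ you exploit plays the role of the paper's observation that $s_n(TQ)\in H^{4n}(Q)=0$. For the arithmetic, the paper telescopes $S_n(PE_r)-S_n(PE_{r+1})=(-1)^{r+1}\binom{2n+1}{r+1}$ and checks divisibility of the specific coefficients $\binom{p^sq}{p^s}$ and $\binom{p^s}{p^{s-1}}$ by writing out the product; your reformulation as the polynomial congruence $(1+x)^{2n+1}\equiv1+x^{2n+1}\pmod m$ (using the symmetry $\binom{2n}{r}=\binom{2n}{2n-r}$ to extend the hypothesis to all $r$), combined with Lucas for divisibility by $p$ and a Kummer carry count for non-divisibility by $p^2$, is equivalent but arguably cleaner, and it sidesteps the small range checks ($p^s\le n$, $p^{s-1}\le n$) that the paper's choice of specific indices $r$ implicitly requires. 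Both arguments are complete and correct.
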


First we compute the Pontrjagin numbers.
\begin{lem}\label{1l} $S_n(PE_r)=1+(-1)^{r+1}\binom{2n}{r}$. \end{lem}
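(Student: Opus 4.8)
The plan is to exploit that $PE_r$ is a complex manifold: I would reduce $S_n(PE_r)$ to the integral over $Q$ of an explicit cohomology class, compute that class by a Gysin-map (Segre-class) computation, and evaluate the resulting integral by a binomial identity. First, write $x:=c_1(\gc_1)$, $y:=c_1(\gc_2)$, using the same symbols for their pullbacks to $Q$ and to $PE_r$; then $H^*(Q;\Z)\cong\Z[x,y]/(x^r,y^{2n-r})$, and — whether $\gc_i$ is the tautological line bundle or its dual — one has $\int_Q x^{r-1}y^{2n-r-1}=1$, the two sign contributions cancelling since $2n-2$ is even. Thus $E_r$ is a rank-$3$ bundle with Chern roots $x,y,0$, so $H^*(PE_r)$ is a free $H^*(Q)$-module on $1,t,t^2$ with relation $t(t+x)(t+y)=0$, where $t:=c_1(\mathcal{O}(1))$ restricts to a generator on the fibres (so that $\pi_*t^2=1$ for the projection $\pi\colon PE_r\to Q$). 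Since $PE_r\to Q$ is a holomorphic fibre bundle, $T(PE_r)\cong\tau\oplus\pi^*TQ$ as complex bundles ($\tau$ the tangent bundle along the fibre), and the relative Euler sequence $0\to\C\to\mathcal{O}(1)\ot\pi^*E_r\to\tau\to0$ shows $\tau\oplus\C$ has Chern roots $t+x,\ t+y,\ t$. As $s_n$ on Pontrjagin classes is the $n$-th Newton power sum it is additive over Whitney sums; $\dim_\R Q=4n-4$ forces $s_n(TQ)\in H^{4n}(Q)=0$; and for a complex bundle the $n$-th power sum of the Pontrjagin roots is the $2n$-th power sum of the Chern roots. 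Hence
\[
S_n(PE_r)=\big\langle (t+x)^{2n}+(t+y)^{2n}+t^{2n},\ [PE_r]\big\rangle .
\]

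Next I would push this class down along $\pi$. Since $\pi_*1=\pi_*t=0$ and $\pi_*t^2=1$, for any $a_1,a_2\in H^*(Q)$ and any $s$ with $s^3+a_1s^2+a_2s=0$, $\pi_*s=0$, $\pi_*s^2=1$, the element $\pi_*(s^m)$ equals the degree-$(m-2)$ part of $(1+a_1+a_2)^{-1}$. Applying this to $s=t$ and — after the substitutions $t=s-x$, resp. $t=s-y$, which turn the relation into $s(s-x)(s-(x-y))=0$, resp. $s(s-y)(s-(y-x))=0$ — to $s=t+x$ and $s=t+y$, one gets
\[
\pi_*t^{2n}=h_{2n-2}(x,y),\qquad \pi_*(t+x)^{2n}=h_{2n-2}(x,x-y),\qquad \pi_*(t+y)^{2n}=h_{2n-2}(y,y-x),
\]
where $h_m(u,v):=\sum_{i+j=m}u^iv^j$. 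Thus $S_n(PE_r)=\int_Q\big[h_{2n-2}(x,y)+h_{2n-2}(x,x-y)+h_{2n-2}(y,y-x)\big]$.

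Finally I would evaluate. Since only $x^{r-1}y^{2n-r-1}$ survives $\int_Q$, I expand $(x-y)^j$ and $(y-x)^j$ binomially and extract its coefficient in each of the three summands; these become $1$, $(-1)^{r+1}\sum_{k=0}^{r-1}\binom{2n-1-r+k}{k}$ and $(-1)^{r+1}\sum_{k=0}^{2n-1-r}\binom{r-1+k}{r-1}$, which by the hockey-stick identity equal $1$, $(-1)^{r+1}\binom{2n-1}{r-1}$ and $(-1)^{r+1}\binom{2n-1}{r}$; Pascal's rule then gives $S_n(PE_r)=1+(-1)^{r+1}\binom{2n}{r}$. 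The substantive obstacle is the bookkeeping of this last step and of the Gysin computation — keeping signs straight and spotting the hockey-stick sums; the one genuinely delicate point throughout is fixing orientation conventions ($t=c_1(\mathcal{O}(1))$, the sign of the projective-bundle relation, the normalization $\int_Q x^{r-1}y^{2n-r-1}=1$) consistently, though none of these intermediate choices affects the final formula.
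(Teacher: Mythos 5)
Your proof is correct, and it takes a genuinely different route from the paper's. The paper works universally: it realizes $PE_r$ as the pullback of the universal $\CP^2$-bundle $BH\to BG$ with $G=U(3)$, $H=U(2)U(1)$, identifies $H^*(BH)$ and $H^*(BG)$ with rings of partially resp. fully symmetric polynomials in $\Z[x_1,x_2,x_3]$, reads off $s_n(\tau)=(x_1-x_3)^{2n}+(x_2-x_3)^{2n}$ from the isotropy representation $\mathfrak{h}^{\bot}\cong (C_1\ot C_3^{-1})\oplus(C_2\ot C_3^{-1})$, and computes $\pi_!$ by a Weyl-antisymmetrization trick (the commuting square with the alternation operator $A$ and $\omega=A(x_1x_3^2)$) before pulling back along the classifying map of $E_r$. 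You instead stay on the total space: the projective-bundle relation $t(t+x)(t+y)=0$ together with the relative Euler sequence gives the Chern roots $t+x,\,t+y,\,t$ of $\tau\oplus\C$, and the Gysin pushforward is executed by the Segre-class recursion, after which the hockey-stick and Pascal identities finish the count. The two computations are parallel in substance — both hinge on additivity of $s_n$, the vanishing of $s_n(TQ)$ in top degree, and a fiber integration of $2n$-th power sums of Chern roots (your $t$ playing the role of $-x_3$) — but yours is the more classical, self-contained algebro-geometric calculation, while the paper's fits its global theme of understanding the universal transfer $\pi_!\colon H^*(BH)\to H^*(BG)$ once and for all. Your closing remark about conventions is sound: with both $\gc_i$ taken with the same (tautological or dual) convention the fundamental-class normalization $\int_Q x^{r-1}y^{2n-r-1}=1$ holds because $(-1)^{r-1}(-1)^{2n-r-1}=1$, and the final formula is indeed insensitive to these choices; I verified the three coefficient extractions and they give $1$, $(-1)^{r+1}\binom{2n-1}{r-1}$ and $(-1)^{r+1}\binom{2n-1}{r}$ as you state.
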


\begin{proof} Let $G:=U(3)$ and $H:=U(2)U(1)$. Then $G$ acts via matrix multiplication on $\CP^2$ with isotropy group $H$. Hence $\CP^2\hto BH\to BG$ is the universal $\CP^2$-bundle with structure group $G$ and $PE_r$ can be written as the pullback
\[
\xgrid=15mm
\ygrid=7mm
\cellpush=6pt
\flexible
\Diagram
PE_r={} & f^*(BH)                 &  \rTo^F      &  BH          \\
        & \dTo^p                  &              &  \dTo^{\pi}  \\
        & Q                       &  \rTo^f      &  BG          \\
\endDiagram
\]
for suitable maps $f$ and $F$.
Let $\tau\to BH$ denote the tangent bundle along the fiber. The tangent bundle of $(PE_r)$ decomposes as $p^*(TQ)\oplus F^*(\tau)$ and one can compute
\begin{equation}\label{rech}\begin{split}
S_n(PE_r)&=\langle s_n(p^*(TQ)\oplus F^*(\tau)),[PE_r]\rangle\\
         &=\langle p^*(s_n(TQ))+F^*(s_n(\tau)),[PE_r]\rangle\\
         &=\langle p_{\natural}(p^*(s_n(TQ)))+p_{\natural}(F^*(s_n(\tau))),[Q]\rangle\\
         &=\langle f^*(\pi_{\natural}(s_n(\tau))),[Q]\rangle.
\end{split}\end{equation}
For the second line note the additivity of $s_n$ modulo elements of order 2 and $H^{4n}(PE_r)\cong\Z$. In the third line we applied $p_*(x\cap p^!(y))=p_!(x)\cap y$. The fourth line follows from $s_n(TQ)\in H^{4n}(Q)=0$ and the naturality of the integration.

Analogous to the proof of Lemma \ref{analog} we consider the $\ad H$-invariant subspace $\C^2\cong\mathfrak{h}^{\bot}\subset\mathfrak{g}$. Then $\tau$ can be written as $EG\x_H\mathfrak{h}^{\bot}$ with $H$-action
\[
(P,z)\in (U(2)U(1))\mapsto(\C^2\ni v\mapsto Pvz^{-1}).
\]
Now we determine $s_n(\tau)=s_n(EH\x_H\mathfrak{h}^{\bot})$. If one considers the inclusion $U(1)^3\hto U(2)U(1)\hto U(3)$ one can interpret $H^*(BH)$ and $H^*(BG)$ as subrings of $H^*(BU(1)^3)\cong\Z[x_1,x_2,x_3]$, $deg(x_i)=2$, as follows:
\begin{align*}
&H^*(BH)\subset\Z[x_1,x_2,x_3],\quad \text{polynomials symmetric in}\ x_1, x_2,\\
&H^*(BG)\subset\Z[x_1,x_2,x_3],\quad \text{polynomials symmetric in}\ x_1, x_2, x_3.
\end{align*}
Hence we write
\[
H^*(BH)=\Z[x_1,x_2,x_3]^{\Sigma_2}\quad \text{and}\quad H^*(BG)=\Z[x_1,x_2,x_3]^{\Sigma_3}.
\]
The representation of $H$ restricted to $U(1)^3$ is given by
\[
\mathfrak{h}^{\bot}|_{U(1)^3}\cong (C_1\ot C_3^{-1})\oplus (C_2\ot C_3^{-1}),
\]
where $C_i$ denotes the one dimensional complex representation
\[
U(1)^3\to Aut(\C), (z_1,z_2,z_3)\mapsto (x\mapsto z_ix).
\]
For a complex line bundle $\xi\to B$ one has $s_n(\xi)=p_1(\xi)^n$ (cf. e.g. \ref{gir}) and so $s_n(\xi)=c_1(\xi)^{2n}$.
In this way we obtain
\[
s_n(\tau)=(x_1-x_3)^{2n}+(x_2-x_3)^{2n}.
\]
Again the Leray-Serre spectral sequence at hand collapses and $H^*(BH)$ becomes a free $H^*(BG)$-module with basis $\{1,x_3,x_3^2\}$ and $\pi_!(a(x)1+b(x)x_3+c(x)x_3^2)=c(x)$. Now the following observation is helpful: The diagram
\begin{equation}\label{kd}
\xgrid=16mm
\ygrid=7mm
\cellpush=6pt
\flexible
\Diagram
\Z[x_1,x_2,x_3]^{\Sigma_2}  &  \rTo^{\pi_!} &  \Z[x_1,x_2,x_3]^{\Sigma_3} \\
\dTo^{\cdot x_1}            &                        &  \dTo^{\cdot\omega}         \\
\Z[x_1,x_2,x_3]             &  \rTo^A                &  \Z[x_1,x_2,x_3]            \\
\endDiagram
\end{equation}
commutes. Here $A$ is the map
\[
x_1^{\ga}x_2^{\gb}x_3^{\gc}\mapsto\sum_{\sigma\in\Sigma_3}\sign(\sigma) x_{\sigma(1)}^{\ga}x_{\sigma(2)}^{\gb}x_{\sigma(3)}^{\gc}
\]
and $\omega=A(x_1x_3^2)$. Since all maps are $\Z[x_1,x_2,x_3]^{\Sigma_3}$-module maps a computation on the basis shows the commutativity.

The classifying map $f\colon\CP^{r-1}\x\CP^{2n-r-1}\to BU(3)$ of $E_r$ resp. $PE_r$ is given by the inclusion
\[
\CP^{r-1}\x\CP^{2n-r-1}\hto\CP^{\infty}\x\CP^{\infty}\cong BU(1)^2\xto{i}BU(3),
\]
where $i$ is induced by $U(1)^2\to U(3)$, $(\ga,\gb)\mapsto \left(\begin{smallmatrix} \ga&0&0\\
                                                                                       0&\gb&0\\
                                                                                       0&0&1\end{smallmatrix}\right)$.
Then we have
\begin{align*}
(Bi)^*(A(x_1s_n(\tau)))
& =(Bi)^*\left(\sum_{\sigma\in\Sigma_3}\sign(\sigma)x_{\sigma(1)}(x_{\sigma(1)}-x_{\sigma(3)})^{2n}\right)\\
& =(Bi)^*(x_1(x_1-x_3)^{2n}-x_1(x_1-x_2)^{2n}-x_3(x_3-x_1)^{2n}\\
& -x_2(x_2-x_3)^{2n}+x_3(x_3-x_2)^{2n}+x_2(x_2-x_1)^{2n})\\
& =x_1^{2n+1}-x_2^{2n+1}-(x_1-x_2)^{2n+1}\quad(\text{da $f^*(x_3)=0$})\\
& =(x_1-x_2)\left(\sum_{k=0}^{2n}x_1^kx_2^{2n-k}-\sum_{k=0}^{2n}(-1)^k\binom{2n}{k}x_1^kx_2^{2n-k}\right)\\
& =(x_1-x_2)\left(\sum_{k=1}^{2n-1}x_1^kx_2^{2n-k}-\sum_{k=1}^{2n-1}(-1)^k\binom{2n}{k}x_1^kx_2^{2n-k}\right)\\
& =(x_1-x_2)x_1x_2\left(\sum_{k=1}^{2n-1}\left(1-(-1)^k\binom{2n}{k}\right)x_1^{k-1}x_2^{2n-k-1}\right).\\
\end{align*}
For the first equality observe that $A(x_1(x_2-x_3)^{2n})=0$.

Because of $(Bi)^*(\omega)=(x_1-x_2)x_1x_2$ one get with \ref{kd}
\[
(Bi)^*(\pi_!(s_n(\tau)))=\sum_{k=1}^{2n-1}\left(1-(-1)^k\binom{2n}{k}\right)x_1^{k-1}x_2^{2n-k-1}.
\]
Since
\[
\langle x_1^{k-1}x_2^{2n-k-1},[\CP^{r-1}\x\CP^{2n-r-1}]\rangle=\begin{cases}1,\ &\text{if}\ k=r\\
                                                                            0,\ &\text{otherwise},\end{cases}
\]
the claims follows now by means of \ref{rech}.
\end{proof}


For a finite sequence $a,b,\ldots\in\Z$ the Euclidean algorithm yields coefficients $A,B,\ldots\in\Z$ such that $Aa+Bb+\dots=\text{gcd}(a,b,\dots)$. Since $S_n(M\sqcup N)=S_n(M)+S_n(N)$ Proposition \ref{tfree} is implied by the following
\begin{lem}
\[
\text{gcd}(S_n(PE_r),1\leq r\leq n)=\begin{cases} 1,\ &\text{if}\ 2n+1\ \text{is not a prime power}\\
                                                  p,\ &\text{if}\ 2n+1\ \text{is a power of some prime $p$.}\end{cases}
\]
\end{lem}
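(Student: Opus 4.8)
The plan is to reduce the statement to two standard arithmetic facts about binomial coefficients. By Lemma~\ref{1l} we have $S_n(PE_r)=1+(-1)^{r+1}\binom{2n}{r}$ for $1\le r\le n$; write $d$ for the claimed $\gcd$. The first step is to record that $S_n(PE_1)=1+2n=2n+1$, so that $d$ divides $2n+1$. In particular, if $2n+1$ is a prime power it must be $p^k$ with $p$ odd (since $2n+1$ is odd), and $d$ is automatically a power of $p$; the remaining work in that case is to determine which power.

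The second step uses Pascal's rule. A one-line computation gives, for $1\le r\le n-1$,
\[
S_n(PE_r)-S_n(PE_{r+1})=(-1)^{r+1}\Bigl(\binom{2n}{r}+\binom{2n}{r+1}\Bigr)=(-1)^{r+1}\binom{2n+1}{r+1},
\]
so $d$ divides $\binom{2n+1}{j}$ for $2\le j\le n$, and together with $d\mid\binom{2n+1}{1}$ from the first step we get $d\mid\gcd\bigl(\binom{2n+1}{j}\mid 1\le j\le n\bigr)$. By the symmetry $\binom{2n+1}{j}=\binom{2n+1}{2n+1-j}$ this gcd coincides with $\gcd\bigl(\binom{2n+1}{j}\mid 1\le j\le 2n\bigr)$, which by the classical fact (a consequence of Kummer's theorem) equals $1$ when $2n+1$ is not a prime power and $p$ when $2n+1=p^k$. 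This already settles the non-prime-power case: there $d=1$.

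For the prime-power case $2n+1=p^k$ it remains to show $p\mid S_n(PE_r)$ for every $r$, so that $d=p$ rather than $d=1$. Here I would invoke Lucas' theorem: since $2n=p^k-1$ has all base-$p$ digits equal to $p-1$, for $0\le r\le p^k-1$ with base-$p$ digits $r_0,\dots,r_{k-1}$ one has $\binom{2n}{r}\equiv\prod_i\binom{p-1}{r_i}\equiv(-1)^{\sum_i r_i}\pmod p$; and since $p$ is odd, $\sum_i r_i\equiv r\pmod 2$, whence $\binom{2n}{r}\equiv(-1)^r\pmod p$. Therefore $S_n(PE_r)=1+(-1)^{r+1}\binom{2n}{r}\equiv 1+(-1)^{2r+1}=0\pmod p$ for all $1\le r\le n$, which gives $p\mid d$ and hence $d=p$.

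The only non-routine inputs are the two cited number-theoretic facts, and even these can be dispatched in a couple of lines; the rest is the elementary arithmetic above, after which the lemma combines with Theorem~\ref{erzeuger} and the Euclidean-algorithm remark preceding it to finish Proposition~\ref{tfree}. I do not expect a genuine obstacle here, since Lemma~\ref{1l} has already done the geometric work; the one point needing a little care is the reduction through the symmetry of Pascal's triangle, which is what lets the range $1\le r\le n$ (rather than the full row) suffice.
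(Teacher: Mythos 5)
Your proposal is correct and takes essentially the same route as the paper: both rest on the identity $S_n(PE_r)-S_n(PE_{r+1})=(-1)^{r+1}\binom{2n+1}{r+1}$ together with $S_n(PE_1)=2n+1$, reducing the lemma to divisibility properties of row $2n+1$ of Pascal's triangle. The only difference is packaging: where you cite the classical row-gcd fact and Lucas' theorem, the paper verifies the needed congruences by hand (exhibiting $\binom{2n+1}{p^s}\not\equiv 0\bmod p$ when $p^s$ is the exact power of $p$ dividing a non-prime-power $2n+1$, and telescoping from $S_n(PE_1)=p^s$ in the prime-power case), which amounts to the same arithmetic.
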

\begin{proof}
We compute
\begin{equation}\begin{split}\label{r=2}
S_n(PE_r)-S_n(PE_{r+1})&=\left(1+(-1)^{r+1}\binom{2n}{r}\right)-\left(1+(-1)^{r+2}\binom{2n}{r+1}\right)\\
                       &=(-1)^{r+1}\binom{2n+1}{r+1}.
\end{split}\end{equation}
Let $2n+1$ be not a prime power. If $p$ is prime divisor of $2n+1$ then $2n+1=p^sq$ for $q>1$ and $p\nmid q$. We have
\begin{align*}
\binom{p^sq}{p^s}&=\frac{p^sq}{p^s}\cdot\frac{p^sq-1}{p^s-1}\cdot\ldots\cdot\frac{p^sq-p^s+1}{1}\\
                 &\not\equiv 0\mod{p}.
\end{align*}
Hence for $r=p^s-1$ (note $q>1$)
\[
S_n(PE_{p^s-1})-S_n(PE_{p^s})=(-1)^{p^s}\binom{p^sq}{p^s}
\]
is not divisible by $p$. In this way we see that that $\text{gcd}(S_n(PE_r),1\leq r\leq n)\nmid p$ for all prime divisors $p$ of $2n+1$. However, one has $S_n(PE_1)=2n+1$ and thus $\text{gcd}(S_n(PE_r),1\leq r\leq n)=1$.

Now let $2n+1=p^s$ denote a prime power. Then $S_n(PE_1)=2n+1$ is divisible by $p$. From $\binom{p^s}{k}\equiv 0\mod{p}$ for $0<k<p^s$ it follows together with \ref{r=2} that $S_n(PE_r)-S_n(PE_{r+1})\equiv 0\mod{p}$ for $1\leq r\leq n-1$. Hereby we see that $\text{gcd}(S_n(PE_r),1\leq r\leq n)$ is divisible by $p$.

We have
\begin{align*}
\binom{p^s}{p^{s-1}}&=\frac{p^s}{p^{s-1}}\cdot\frac{p^s-1}{p^{s-1}-1}\cdot\ldots\cdot\frac{p^s-p^{s-1}+1}{1}\\
                    &\not\equiv 0\mod{p^2}.
\end{align*}
Hence for $r=p^{s-1}-1$
\[
S_n(PE_{p^{s-1}-1})-S_n(PE_{p^{s-1}})=(-1)^{p^{s-1}}\binom{p^s}{p^{s-1}}
\]
is not divisible by $p^2$. Since $S_n(PE_1)=p^s$ the claim follows.
\end{proof}

\section{The unoriented case}

The first list of generators of the unoriented cobordism ring was given by \cite{thom} (even dimensions) and \cite{dold} (odd dimensions). In \cite{gschnitzer} it is proved that $\RP^2$-bundles serve as generators as well. We shall reprove this result using the developed techniques.
 
For unoriented cobordism we take $G:=SO(3)$, $H:=S(O(2)(1))$ and consider the fiber bundle
\begin{equation}
\RP^2\cong G/H\hto BH\xto{\pi} BG.
\end{equation}
One proceeds like in the oriented case. The corresponding statements to Theorem \ref{a} are
\begin{enumerate}
\item $H^*(B(S(O(2)O(1))))\cong\Z_2[x_1,x_2]$.
\item $H^*(B(SO(3)))\cong \Z_2[y_2,y_3]$.
\item $\pi^*\colon H^*(B(SO(3)))\to H^*(B(S(O(2)O(1))))$ is given by $y_2\mapsto x_1^2+x_2,\ y_3\mapsto x_1x_2$.
\item The total Stiefel-Whitney class of the tangent bundle along the fiber is $\gw(\tau)=1+x_1+x_2$.
\end{enumerate}
The proof is similar to the oriented case.

We note that one need not compute the $Sq^1$-action. However, the integration along the fiber is harder to handle than in the oriented case as one can not compute modulo a simple ideal.

Since $H_*(MO)$ is a free $A_*$-comodule an inspection of the explanation around Theorem \ref{mso} shows that one has to replace $A(0)^*$ by $\Z_2$. We now have to prove the unoriented version of Proposition \ref{l}, namely
\begin{prop}\label{11} The $\Z_2$-comodule map
\[
\overline{T_*}\colon \overline{H_*(MO\wedge\Sigma^2BG_+)} \to \overline{H_*(MO)}
\]
is split surjective onto the augmentation ideal $I(\overline{H_*(MO)})$ of $\overline{H_*(MO)}$.
\end{prop}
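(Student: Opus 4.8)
The plan is to mirror the proof of Proposition \ref{l}. Two things make the argument shorter. First, a $\Z_2$-comodule is merely a graded $\Z_2$-vector space, so ``split surjective'' just means ``surjective''. Second, a $\Z_2$-module has no $Sq^1$-homology, so there is no analogue of Proposition \ref{sqprim} to establish. Passing to cohomology exactly as in the proof of Proposition \ref{l} — dualizing $\overline{T_*}$, identifying its dual with $(\id\ot t^*(Mv)^*)\circ\overline{\mu^*}$, and invoking the standard reduction to primitive elements for a connected coalgebra — the statement becomes the unoriented analogue of Proposition \ref{prim}: the composite
\[
P\overline{H^*(MO)}\xto{(Mv)^*}H^*(M(-\tau))\xto{t^*}H^*(\Sigma^2BG_+)
\]
is injective.

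To attack this I would make the map explicit, following the discussion preceding Proposition \ref{prim}. By the $Mh$-trick and the fact that the Thom isomorphism respects coalgebra structures, one replaces $Mv$ by $M\tilde v$, where $\tilde v\colon BH\to BO$ classifies the tangent bundle along the fiber $\tau$; since $\tau$ is a $2$-plane bundle with $\gw(\tau)=1+x_1+x_2$ (point (4) of the list above), the induced map in cohomology is
\[
\Z_2[\gw_1,\gw_2,\dots]\xto{\tilde v^*}\Z_2[x_1,x_2]=H^*(BH),\qquad\gw_1\mapsto x_1,\ \gw_2\mapsto x_2,\ \gw_i\mapsto 0\ (i\geq 3),
\]
followed by the integration along the fiber $\pi_!\colon H^*(BH)\to H^{*-2}(BG)=\Z_2[y_2,y_3]$. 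From the collapse of the Leray--Serre spectral sequence of $\RP^2\hto BH\to BG$ (the unoriented counterpart of the last part of Section \ref{fiberbundle}), $H^*(BH)$ is free over $H^*(BG)$ on $\{1,x_1,x_1^2\}$, i.e. $H^*(BH)\cong\Z_2[y_2,y_3][x_1]/(x_1^3+y_2x_1+y_3)$, and $\pi_!(a\cdot 1+b\cdot x_1+c\cdot x_1^2)=c$. By Theorem \ref{mso} and the remarks around it, $P\overline{H^*(MO)}$ vanishes for $n<2$ and for $n=2^j-1$, and in every other degree it is one-dimensional, spanned by the Milnor class $s_n(\gw)$; since $\tilde v^*(s_n(\gw))=s_n(x_1,x_2,0,\dots)$ is the power sum $\ga^n+\gb^n$ in the two Chern roots $\ga,\gb$ (with $\ga+\gb=x_1$, $\ga\gb=x_2$), the proposition comes down to showing $\pi_!(\ga^n+\gb^n)\neq 0$ in $\Z_2[y_2,y_3]$ for every $n\geq 2$ with $n\neq 2^j-1$.

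To prove this I would first dispose of even $n$. Writing $p_m:=\ga^m+\gb^m=a+bx_1+cx_1^2$ with $a,b,c\in\Z_2[y_2,y_3]$, one has $p_{2m}=p_m^2$, and reducing $p_m^2$ modulo $x_1^3+y_2x_1+y_3$ yields $\pi_!(p_{2m})=b^2+c^2y_2$, which is nonzero — because $y_2$ is not a square in $\Z_2[y_2,y_3]$ — whenever $c\neq 0$ (the inductive case $m\neq 2^j-1$) or $b\neq 0$ (the case $m=2^j-1$, where $p_m$ fails to lie in the subring $\Z_2[y_2,y_3]$). For $n$ odd with $n\neq 2^j-1$ I would expand $p_n=\ga^n+\gb^n$ by Girard's formula \ref{gir} (equivalently, by the recursion $p_n=x_1p_{n-1}+x_2p_{n-2}$), reduce the outcome in $\Z_2[y_2,y_3][x_1]/(x_1^3+y_2x_1+y_3)$, and extract the coefficient of $x_1^2$; an arithmetic lemma on binomial coefficients modulo $2$, in the spirit of Lemma \ref{binom}, then isolates one surviving monomial and shows the coefficient does not vanish. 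The main obstacle is exactly this final computation: as the text notes, in the unoriented case one cannot simplify modulo a convenient principal ideal — there is nothing playing the role of the reduction ``modulo $y_6$'' used in the oriented argument — so the full value of $\pi_!(\ga^n+\gb^n)$ must be tracked through the filtration, which, together with the accompanying binomial-coefficient lemma, is the delicate (though entirely self-contained) part of the proof.
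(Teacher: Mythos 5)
Your reduction is correct and coincides with the paper's: over $\Z_2$ split surjectivity is just surjectivity, dualizing turns the claim into injectivity of $PH^n(MO)\to H^{n-2}(BG)$ for $n\geq 2$, $n\neq 2^j-1$ (this is Proposition \ref{12}), and your explicit model — $H^*(BH)\cong\Z_2[y_2,y_3][x_1]/(x_1^3+y_2x_1+y_3)$, $\pi_!(a+bx_1+cx_1^2)=c$, $\tilde v^*(s_n(\gw))=\ga^n+\gb^n$ — is exactly what the paper uses. Your disposal of even $n$ by squaring (namely $\pi_!(p_m^2)=b^2+c^2y_2$, which vanishes only if $b=c=0$ since $y_2$ is not a square in $\Z_2[y_2,y_3]$, and $p_m\notin\Z_2[y_2,y_3]$ as one sees by specializing $x_2\mapsto 0$) is correct and is a clean alternative to the paper's base case $i=0$ of Lemma \ref{13}.

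The gap is the odd case, which is the entire content of the proposition and which you defer to an unnamed ``arithmetic lemma on binomial coefficients \ldots\ in the spirit of Lemma \ref{binom}'' that ``isolates one surviving monomial.'' No such lemma is formulated, and there is real reason to doubt that the route through Girard's formula works: in the oriented case the surviving monomial is found only after passing to the quotient by $(y_6)$ and filtering by powers of $y_1$, and you yourself observe that no analogous simplification is available here. Concretely, the leading term of $z_n:=\pi_!(\ga^n+\gb^n)$ with respect to the $y_3$-adic filtration sits in a $y_3$-degree that depends on the dyadic expansion of $n+1$: the paper's Lemma \ref{13} shows $z_n\equiv y_3^{2^i-1}y_2^{\ga(i,n)}\pmod{y_3^{2^i}}$, where $i$ is determined by $n\equiv 2^i-1\pmod{2^{i+1}}$. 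So for instance for $n\equiv 1\pmod 4$, $n>1$, the entire $y_3^0$-part of $z_n$ vanishes and the first nonzero contribution is in $y_3$-degree $1$; a term-by-term binomial count that does not first locate this critical $y_3$-degree will see only cancellation. The paper's actual mechanism is not a binomial computation at all but an induction on $i$ driven by the Frobenius-iterated recursion \ref{mf}, $z_n=y_2^{2^j}z_{n-2^{j+1}}+y_3^{2^j}z_{n-3\cdot 2^j}$, together with the vanishing $z_{2^m-1}=0$ (\ref{sn0}). You would need to supply either this induction or a genuinely worked-out substitute; as written, the decisive step of the proof is missing.
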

Since $\Z_2$ is a field a $\Z_2$-comodule map is split surjective if and only if it is surjective. This in turn means that for the proof of Proposition \ref{11} it remains to show the unoriented version of Proposition \ref{prim}, namely
\begin{prop}\label{12} The map
\[
PH^n(MO) \xto{(Mv)^*} H^n(M(-\tau)) \xto{t^*} H^{n-2}(BG)
\]
is injective.
\end{prop}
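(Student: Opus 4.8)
The plan is to mimic the oriented argument (Proposition \ref{prim}), which was ultimately reduced to the concrete computation \ref{einfach}: that $\pi_!\circ\tilde v^*$ sends each primitive class of $H^*(BSO)$ to a nonzero element of $H^*(BG)$ modulo a nuisance ideal. In the unoriented case the corresponding diagram, coming from the bundle data in points (1)--(4) above, is
\begin{equation*}
\Z_2[\gw_2,\gw_3,\dots]\xto{\tilde v^*}\Z_2[x_1,x_2]\xto{\pi_!}\Z_2[y_2,y_3],
\end{equation*}
where $\tilde v^*$ is read off from $\gw(\tau)=1+x_1+x_2$, so $\gw_1\mapsto x_1$, $\gw_2\mapsto x_2$, and $\gw_i\mapsto 0$ for $i\geq 3$ (the class $\gw(\tau)$ has no component above degree $2$). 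First I would record that the map $\tilde v^*$ therefore factors through $H^*(BO)/(\gw_3,\gw_4,\dots)=\Z_2[\gw_1,\gw_2]$, exactly as in \ref{einfach}; and that, by Theorem \ref{mso} applied to $MO$ (where $A(0)^*$ is replaced by $\Z_2$, as the excerpt notes just before the statement), $PH^n(MO)\cong PH^n(BO)$ is one-dimensional for every $n\geq 1$, spanned by the Newton class $s_n(\gw)$ when $n$ is not a power of $2$, and by $\gw_1^{2^i}$ when $n=2^i$ (note $BO$, not $BSO$: here $\gw_1$ survives, so there is a primitive in every degree). So it suffices to show $\pi_!(\tilde v^*(s_n(\gw)))\neq 0$ and $\pi_!(\tilde v^*(\gw_1^{2^i}))\neq 0$.

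Next I would compute $\pi_!$ explicitly on $\Z_2[x_1,x_2]$, which is the one genuinely new ingredient: as remarked in the text, unlike the oriented case one cannot quotient by a convenient ideal, so one needs the honest formula. Since the Leray--Serre spectral sequence of $\RP^2\hookrightarrow BH\to BG$ collapses, $H^*(BH)$ is free over $H^*(BG)=\Z_2[y_2,y_3]$ on $\{1,x_1,x_1^2\}$ (the fiber being $\RP^2$ with $H^*(\RP^2;\Z_2)=\Z_2[x_1]/x_1^3$), with $\pi^*y_2=x_1^2+x_2$, $\pi^*y_3=x_1x_2$, and $\pi_!$ reads off the coefficient of $x_1^2$. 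Writing any monomial $x_1^a x_2^b$ in this basis by repeatedly substituting $x_2=(x_1^2+x_2)+x_1^2=\pi^*y_2+x_1^2$ and $x_1^3=x_1\cdot x_1^2$, $x_1 x_2 = \pi^*y_3 + \cdots$, gives a closed recursion for $\pi_!(x_1^a x_2^b)$ in terms of $y_2,y_3$. The case $\gw_1^{2^i}\mapsto x_1^{2^i}$ is then immediate: reducing $x_1^{2^i}$ in the module basis produces a top term $y_2^{2^{i-1}-1}\cdot x_1^2$ plus lower terms, so $\pi_!(x_1^{2^i})=y_2^{2^{i-1}-1}+\cdots\neq 0$, just as in the displayed computation for $\gw_2^{2^i}$ in the proof of Proposition \ref{prim}.

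For $s_n(\gw)$ with $n$ not a power of $2$, I would run the same filtration bookkeeping as in the oriented proof. Using Girard's formula \ref{gir} with only $\gw_1,\gw_2$ surviving, $\tilde v^*(s_n(\gw))$ is a polynomial in $x_1,x_2$ whose lowest-$x_1$-degree term, with respect to a descending filtration by the $x_1$-exponent, I would isolate; the coefficient one cares about is a binomial $\binom{n-k}{k}\bmod 2$ type expression, and the key point is that for $n$ not of the form $2^j-1$ such a term survives mod $2$ --- this is the analogue of Lemma \ref{binom}, and in fact is slightly easier since here we have the two "variables" $\gw_1,\gw_2$ with $2\cdot(\text{exponent of }\gw_1)+\ldots$ rather than the $2a+3b=n$ constraint. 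Tracking this surviving term through $\pi_!$ (which, as in \ref{beo}, sends $x_1^{a}x_2^{b}$ to a monomial $y_2^{*}y_3^{*}$ in a filtration-compatible way) shows $\pi_!(\tilde v^*(s_n(\gw)))\not\equiv 0$ modulo the next filtration stage, hence $\neq 0$. Since $PH^n(MO)$ is one-dimensional in each degree, injectivity on all of $PH^*(MO)$ follows.

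The main obstacle is the explicit description of $\pi_!\colon\Z_2[x_1,x_2]\to\Z_2[y_2,y_3]$ and the ensuing mod-$2$ binomial-coefficient combinatorics: because there is no simplifying quotient available, the filtration argument must be set up carefully enough that the "leading term" of $\pi_!(\tilde v^*(s_n))$ is genuinely seen to be nonzero rather than being cancelled by lower-order contributions. Once the correct analogue of the filtrations $F_kB,F_kC,F_kD$ and of Lemma \ref{binom} is in place, the rest is a direct transcription of the proof of Proposition \ref{prim}, with $A(0)^*$-considerations (Proposition \ref{sqprim}, Lemma \ref{sq1bg}) simply dropped since $H_*(MO)$ is $A_*$-free.
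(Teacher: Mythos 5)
There is a genuine gap, and it starts with your identification of the primitives. You take $PH^n(MO)\cong PH^n(BO)$ to be one-dimensional in \emph{every} degree $n\geq 1$ and propose to show that each $s_n(\gw)$ survives. But the composite $\pi_!\circ\tilde v^*$ kills $s_n(\gw)$ precisely when $n=2^i-1$: already for $n=3$ one has $s_3(\gw)=\gw_1^3+\gw_1\gw_2+\gw_3\mapsto x_1^3+x_1x_2=\pi^*(y_2)\cdot x_1$, whose $x_1^2$-coefficient vanishes, so $z_3=0$ (and likewise $z_{2^m-1}=0$ for all $m$, as the paper proves in \ref{sn0}). So on the full coalgebra primitives of $H^*(BO)$ the map is \emph{not} injective, and your concluding sentence (``since $PH^n(MO)$ is one-dimensional in each degree, injectivity on all of $PH^*(MO)$ follows'') is inconsistent with your own restriction to $n$ not of the form $2^j-1$ two sentences earlier. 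The proposition only makes sense, and is only needed, for the primitives coming from $\overline{H^*(MO)}$, i.e.\ the dual of the indecomposables of $\Z_2\ot_{A_*}H_*(MO)\cong\Z_2[b_i\colon i\neq 2^k-1]$; these vanish in degrees $2^i-1$, which is exactly why the paper's statement carries no restriction on $n$. Your proposal needs this reduction made explicit, otherwise the claim you set out to prove is false.

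The second, larger gap is that the core computation is not ``a direct transcription of the proof of Proposition \ref{prim}.'' The oriented filtration argument worked because one could discard everything in the ideals $(\gw_3\gw_4)$ and $(y_6)$ and then read off a monomial leading term; here $H^*(BG)=\Z_2[y_2,y_3]$ admits no such simplifying quotient (the paper says so explicitly), and the quantities $\pi_!(x_1^m)$ are genuine polynomials in $y_2,y_3$ rather than monomials, so a ``leading term in the $x_1$-filtration'' can be cancelled by lower-order contributions --- indeed it \emph{is} cancelled exactly in degrees $2^i-1$. What the paper actually does is different in kind: Newton's identity $s_n=\gw_1s_{n-1}+\gw_2s_{n-2}$ yields the linear recursion $z_n=y_2z_{n-2}+y_3z_{n-3}$ with $z_1=z_3=0$, $z_2=1$, and Lemma \ref{13} then proves $z_n\equiv y_3^{2^i-1}y_2^{\ga(i,n)}\bmod y_3^{2^i}$, where $i$ is read off from the $2$-adic expansion of $n$ via $n\equiv 2^i-1\bmod 2^{i+1}$, using the doubled recursions $z_n=y_2^{2^j}z_{n-2^{j+1}}+y_3^{2^j}z_{n-3\cdot 2^j}$. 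Your Girard-formula/binomial-coefficient sketch does not supply this, and the phrase ``the rest is a direct transcription'' papers over precisely the part of the proof that is new. (Your side computation $\pi_!(\tilde v^*(\gw_1^{2^i}))=y_2^{2^{i-1}-1}+\cdots$ is correct, but it only covers $n=2^i$.)
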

\begin{proof} From \cite{switzer}, Theorem 2.80, it follows that $PH^n(MO)$ is isomorphic to $\Z_2$ for $n\neq 2^i-1$ and zero otherwise. We consider the maps (cf. \ref{einfach})
\begin{equation}
\Z_2[\gw_1,\gw_2]\xto{\tilde{v}^*}\Z_2[x_1,x_2]\xto{\pi_!}\Z_2[y_2,y_3].
\end{equation}
The nontrivial primitive element in $PH^n(MO)$ is $s_n(\gw)$ (cf. \ref{primu}). We have $\tilde{v}^*(\gw_1)=x_1$ and $\tilde{v}^*(\gw_2)=x_2$, and $\pi_!$ is a $H^*(BG)$-module map, given on the basis $1,x_1, x_1^2$ by $\pi_!(1)=\pi_!(x_1)=0$ and $\pi_!(x_1^2)=1$.

Let $z_n:=(\pi_!\circ\tilde{v}^*)(s_n(\gw))$ (note that $z_n$ has cohomological degree $n-2$). Modulo 2 one has $s_n(\gw)=\gw_1s_{n-1}(\gw)+\ldots+\gw_{n-1}s_1(\gw)=\gw_1s_{n-1}(\gw)+\gw_2s_{n-2}(\gw)$. Using this formula again for $s_{n-1}(\gw)$ yields
\begin{equation}\label{rf}\begin{split}
 z_n     &=(\pi_!\circ\tilde{v}^*)  (\gw_1(\gw_1s_{n-2}(\gw)+\gw_2s_{n-3}(\gw))+\gw_2s_{n-2}(\gw))\\
         &=(\pi_!\circ\tilde{v}^*)  ((\gw_1^2+\gw_2)s_{n-2}(\gw)+\gw_1\gw_2s_{n-3}(\gw))\\
         &=y_2z_{n-2}+y_3z_{n-3}.
\end{split}\end{equation}

We compute
\begin{equation}\label{ic}
z_1=0,\ z_2=1\ \text{and}\ z_3=0.
\end{equation}
Hence Proposition \ref{12} follows from
\begin{lem}\label{13} The sequence $z_n\in\Z_2[y_2,y_3]$, recursively defined by \ref{rf} with initial condition \ref{ic}, is not zero for $n\neq 2^i-1$.\end{lem}
\begin{proof} We need the formula
\begin{equation}\label{mf}
z_n=y_2^{2^j}z_{n-2^{j+1}}+y_3^{2^j}z_{n-3\cdot2^j},\quad\text{for all}\ j\ \text{with}\ 3\cdot2^j<n,
\end{equation}
which follows easily by induction over $j$: The initial step $j=0$ is given by \ref{rf}. Moreover, assuming the formula for $j-1$, then
\begin{align*}
z_n&=y_2^{2^{j-1}}z_{n-2^j}+y_3^{2^{j-1}}z_{n-3\cdot2^{j-1}}\quad\text{assumption applied to $z_n$}\\
   &=y_2^{2^{j-1}} (y_2^{2^{j-1}}z_{n-2^j-2^j}           + y_3^{2^{j-1}}z_{n-3\cdot2^{j-1}-2^j})\quad\text{assumption applied to $z_{n-2^j}$}\\
   &+y_3^{2^{j-1}} (y_2^{2^{j-1}}z_{n-3\cdot2^{j-1}-2^j} + y_3^{2^{j-1}}z_{n-3\cdot2^{j-1}-3\cdot2^{j-1}})\quad\text{assumption applied to $z_{n-3\cdot 2^{j-1}}$}\\
   &=y_2^{2^j}z_{n-2^{j+1}}+y_3^{2^j}z_{n-3\cdot 2^j}.\\
\end{align*}

Now for any $n\in\N$ pick a natural number $i\geq 0$ such that
\begin{equation}
n\equiv 2^i-1\mod{2^{i+1}}.
\end{equation}
Such an $i$ exists (and it is unique): Consider the $2$-adic expansion of $n$. Say that after the last zero digit there are $m$ 'ones', then $n\equiv 2^m-1\mod{2^{m+1}}$ (for example if $n=19=10011$ then $m=2$ and $19\equiv 3\mod{8}$).
We set $\ga(i,n):={\frac{n-3\cdot (2^i-1)-2}{2}}$ and shall prove by induction over $i$ that for all $n\neq 2^i-1$
\begin{equation}\label{ia}
z_n\equiv y_3^{2^i-1}y_2^{\ga(i,n)}\mod{y_3^{2^i}}.
\end{equation}
Hence, $z_n\neq 0$ as desired.

Initial step: For $i=0$, which means $n$ even, it follows by repeated use of \ref{mf} that
$z_n\equiv y_2^{\frac{n-2}{2}}\mod{y_3}$.

Induction step: Assume that \ref{ia} is proven for $i-1$. In \ref{mf} we set $j=i-1$, the condition $3\cdot2^j<n$ is satisfied since $n\neq 2^i-1$ and $n\equiv 2^i-1\mod{2^{i+1}}$, and get
\begin{equation}\label{le}
z_n=y_2^{2^{i-1}}z_{n-2^i}+y_3^{2^{i-1}}z_{n-3\cdot 2^{i-1}}.
\end{equation}
We show that modulo $y_3^{2^i}$ the left term of \ref{le} is zero and the right term of \ref{le} is $y_3^{2^i-1}y_2^{\ga(i,n)}$.
First note that \begin{equation}\label{sn0}
z_{2^m-1}=0
\end{equation}
for arbitrary $m\in\N$: Setting $j=m-2$ in \ref{mf} yields
\begin{align*}
z_{2^m-1}&=y_2^{2^{m-2}}z_{2^m-1-2^{m-1}}+y_3^{2^{m-2}}z_{2^m-1-3\cdot 2^{m-2}}\\
         &=y_2^{2^{m-2}}z_{2^{m-1}-1}+y_3^{2^{m-2}}z_{2^{m-2}-1}.\\
\end{align*}
By repeated use we can write this expression as some terms times $z_1$ and $z_3$, which both are zero (cf. \ref{ic}).

Left term of \ref{le}: $n-2^i$ can be written as $k\cdot2^{i+1}-1$ for some $k\geq 1$. For $k=1$ we have $z_{2^{i+1}-1}=0$, cf. \ref{sn0}. For $k\geq 2$ we can set $j=i$ in \ref{mf} and get
\begin{align*}
z_{k\cdot2^{i+1}-1}&=y_2^{2^i}z_{k\cdot2^{i+1}-1-2^{i+1}}+y_3^{2^i}\cdot T,\quad T\ \text{some term},\\
                   &\equiv y_2^{2^i}z_{(k-1)\cdot2^{i+1}-1}\mod{y_3^{2^i}}.\\
\end{align*}
By repeated use we conclude that $z_{k\cdot2^{i+1}-1}\equiv y_2^{(k-1)\cdot{2^i}}z_{2^{i+1}-1}\mod{y_3^{2^i}}$ and again with \ref{sn0} the claim follows.

Right term of \ref{le}: Since
\begin{align*}
n-3\cdot 2^{i-1} &\equiv 2^i-1-3\cdot 2^{i-1}\mod{2^{i+1}}\\
                 &\equiv -2^{i-1}-1\mod{2^{i+1}}\\
                 &\equiv 2^{i-1}-1\mod{2^i}\\
\end{align*}
and $n-3\cdot 2^{i-1}\neq 2^{i-1}-1$ we can apply the induction assumption to $z_{n-3\cdot 2^{i-1}}$ and obtain
\begin{align*}
y_3^{2^{i-1}}z_{n-3\cdot 2^{i-1}}&=y_3^{2^{i-1}}(y_3^{2^{i-1}-1}y_2^{\ga(i-1,n-3\cdot 2^{i-1})}+T'\cdot y_3^{2^{i-1}}),\quad T'\ \text{some term},\\
                                 &\equiv y_3^{2^i-1}y_2^{\ga(i,n)}\mod{y_3^{2^i}}.\\
\end{align*}
This finishes the induction step and so the proof of Lemma \ref{13} and in turn the proof of Proposition \ref{12}.
\end{proof}
\end{proof}

It remains to show that the Adams spectral sequence
\[
Ext_{A_*}^{s,t}(\Z_2,H_*(MO\wedge\Sigma^2BG_+))\Longrightarrow\pi_{t-s}(MO\wedge\Sigma^2BG_+)/T_{\nmid 2}
\]
collapses. The unoriented version of \ref{fmo} is given by
\[
H_*(MO)\otimes H_*(\Sigma^2BG_+)\cong A_*\boxempty_{\Z_2}(\overline{H_*(MO)}\ot H_*(\Sigma^2BG_+)).
\]
This means that $H_*(MO)\otimes H_*(\Sigma^2BG_+)$ is free as a $A_*$-comodule and hence the spectral sequence collapses.


\providecommand{\bysame}{\leavevmode\hbox to3em{\hrulefill}\thinspace}
\providecommand{\MR}{\relax\ifhmode\unskip\space\fi MR }
\providecommand{\MRhref}[2]{%
  \href{http://www.ams.org/mathscinet-getitem?mr=#1}{#2}
}
\providecommand{\href}[2]{#2}

\end{document}